\newcounter{mycount}
\theoremstyle{plain}
\newtheorem{theorem}[mycount]{Theorem}
\newtheorem{corollary}[mycount]{Corollary}
\newtheorem{lemma}[mycount]{Lemma}
\newtheorem{proposition}[mycount]{Proposition}
\newtheorem{remark}{Remark}
\theoremstyle{definition}
\newtheorem{definition}{Definition}
\numberwithin{mycount}{section}
\theoremstyle{example}
\newtheorem{example}{Example}
\theoremstyle{remark}
\numberwithin{equation}{section} \numberwithin{figure}{section}
\numberwithin{definition}{section} \numberwithin{remark}{section}
\numberwithin{example}{section}
\begin{document}
\title[Outside nested decompositions and Schur function determinants]{Outside nested decompositions of skew diagrams and Schur function determinants}
\author{Emma Yu Jin}
\thanks{Corresponding author email: yu.jin@tuwien.ac.at. The author was supported by the German Research Foundation DFG, JI 207/1-1, and is supported by the Austrian Research Fund FWF, project SFB Algorithmic and Enumerative Combinatorics F50-02/03}
\email{yu.jin@tuwien.ac.at}
\address{Institut f\"ur Diskrete Mathematik und Geometrie, TU Wien, Wiedner Hauptstr. 8--10, 1040 Vienna, Austria}

\maketitle

\begin{abstract}
In this paper we describe the {\em thickened strips} and the {\em outside nested decompositions} of any skew shape $\lambda/\mu$. For any such decomposition $\Phi=(\Theta_1,\Theta_2,\ldots,\Theta_g)$ of the skew shape $\lambda/\mu$ where $\Theta_i$ is a thickened strip for every $i$, if $r$ is the number of boxes that are contained in any two distinct thickened strips of $\Phi$,
we establish a determinantal formula of the function $s_{\lambda/\mu}(X)p_{1^r}(X)$ with the Schur functions of thickened strips as entries, where $s_{\lambda/\mu}(X)$ is the Schur function of the skew shape $\lambda/\mu$ and $p_{1^r}(X)$ is the power sum symmetric function index by the partition $(1^r)$. This generalizes Hamel and Goulden's theorem on the outside decompositions of the skew shape $\lambda/\mu$ ({\em Planar decompositions of tableaux and Schur function determinants}, Europ. J. Combinatorics, 16, 461-477, 1995).
As an application of our theorem, we derive the number of $m$-strip tableaux which was first counted by Baryshnikov and Romik ({\em Enumeration formulas for Young tableaux in a diagonal strip}, ‎Israel J. Math, 178, 157-186, 2010)
via extending the transfer operator approach due to Elkies.
\end{abstract}
\tableofcontents
\section{Introduction and main results}
One of the most fundamental results on the symmetric functions is the determinantal expression of the Schur function $s_{\lambda/\mu}(X)$ for any skew shape $\lambda/\mu$; see \cite{Macdonald,Stanley:ec2}. The Jacobi-Trudi determinant \cite{Jacobi,Gessel-Viennot} and its dual \cite{Macdonald,Gessel-Viennot}, the Giambelli determinant \cite{Giambelli,Stembridge} as well as Lascoux and Pragacz's rim ribbon determinant \cite{LP,Ueno} are all of this kind. Hamel and Goulden \cite{HG} remarkably found that all above mentioned determinants for the Schur function $s_{\lambda/\mu}(X)$ can be unified through the concept of {\em outside decompositions} of the skew shape $\lambda/\mu$.

In what follows all definitions will be postponed until subsection~\ref{ss:2} and we first present Hamel and Goulden's theorem (Theorem~\ref{T:HG}).
\begin{theorem}[\cite{HG}]\label{T:HG}
If the skew diagram of $\lambda/\mu$ is edgewise connected. Then, for any outside decomposition $\phi=(\theta_1,\theta_2,\ldots,\theta_g)$ of the skew shape $\lambda/\mu$, it holds that
\begin{align}\label{E:HG}
s_{\lambda/\mu}(X)=
\det[s_{\theta_i\#\theta_j}(X)]_{i,j=1}^{g},
\end{align}
where $s_{\varnothing}(X)=1$ and $s_{\theta_i\#\theta_j}(X)=0$ if $\theta_i\#\theta_j$ is undefined.
\end{theorem}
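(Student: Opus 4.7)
The plan is to prove the identity via a Lindström--Gessel--Viennot (LGV) lattice-path argument, following the classical template that recovers the Jacobi--Trudi identity in the simplest case. The starting observation is that for a strip (ribbon) shape $\theta$, semistandard Young tableaux of shape $\theta$ are in weight-preserving bijection with lattice paths in a suitably labelled grid; in particular each entry $s_{\theta_i\#\theta_j}(X)$ on the right-hand side of \eqref{E:HG} may be read as a generating function of weighted lattice paths between two prescribed endpoints.

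Concretely, I would attach to each strip $\theta_i$ a source $a_i$ (its starting corner) on one boundary of $\lambda/\mu$ and a sink $b_i$ (its ending corner) on the opposite boundary. The defining feature of an outside decomposition is that these endpoints lie on the boundary of the diagram in a consistent order, so that sources and sinks form ordered systems suitable for LGV. The splice $\theta_i\#\theta_j$ consists of the initial segment of $\theta_i$ glued to the terminal segment of $\theta_j$ along their common content diagonal, and hence $s_{\theta_i\#\theta_j}(X)$ equals the generating function for lattice paths from $a_i$ to $b_j$ (with the convention that an undefined splice yields no path and the entry $0$). Expanding
\begin{equation*}
\det\bigl[s_{\theta_i\#\theta_j}(X)\bigr]_{i,j=1}^g \;=\; \sum_{\sigma\in S_g}\mathrm{sgn}(\sigma)\sum_{(P_1,\ldots,P_g)}\prod_{i=1}^g x^{\mathrm{wt}(P_i)},
\end{equation*}
where $P_i$ runs from $a_i$ to $b_{\sigma(i)}$, I would then apply the standard sign-reversing involution (swap tails at the first intersection). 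This cancels all tuples in which two paths meet, and the prescribed endpoint ordering forces every surviving non-intersecting tuple to have $\sigma=\mathrm{id}$.

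It remains to exhibit a weight-preserving bijection between non-intersecting $g$-tuples $(P_1,\ldots,P_g)$ with $P_i\colon a_i\to b_i$ and semistandard tableaux of $\lambda/\mu$. In the forward direction, a tableau $T$ of $\lambda/\mu$ restricts to an SSYT on each strip $\theta_i$, yielding a path $P_i$, and the row/column monotonicity of $T$ across strip boundaries translates into non-intersection of the paths. In the reverse direction one reassembles the strip fillings and must verify that non-intersection implies row weak-increase and column strict-increase at every interface between adjacent strips. This last verification is the main obstacle: it demands a careful local analysis at every box where two strips of $\Phi$ abut, and it is precisely here that the \emph{outside} hypothesis (together with edgewise connectedness of $\lambda/\mu$) is essential, since it guarantees that the strips tile $\lambda/\mu$ in a manner compatible with both the content-diagonal and reading orders encoded in the lattice-path picture.
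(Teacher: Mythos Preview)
Your outline is essentially correct and coincides with the original Hamel--Goulden argument, which the present paper cites rather than reproves directly: here Theorem~\ref{T:HG} is recovered as the special case $r=0$ of the more general Theorem~\ref{T:thick}, and the proof of that theorem (bijection to lattice paths, identification of the non-crossing tuples with SSYT of $\lambda/\mu$, and a sign-reversing involution cancelling the rest) specializes exactly to the LGV argument you describe once thickened strips are replaced by ordinary strips and the paper's ``double lattice paths'' collapse to single paths.

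Two small points are worth tightening. First, with the paper's convention $\theta_i\#\theta_j=[p(\theta_j),q(\theta_i)]$, the entry $s_{\theta_i\#\theta_j}$ corresponds to a path from the \emph{source of $\theta_j$} to the \emph{sink of $\theta_i$}, the transpose of what you wrote; this is harmless since the determinant is unaffected. Second, the ``swap tails at the first intersection'' step is not automatic in this model, because the step between lines $x=c$ and $x=c+1$ may be horizontal or diagonal depending on the direction of the strip at content $c$, and a tail-swap between two paths with different step types there would not yield valid paths. What makes the swap go through is precisely the \emph{nested} property of outside decompositions---all boxes of a given content go the same way---so that every path uses the same step type between any two consecutive vertical lines. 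You gesture at this in your final paragraph, but it should be made explicit, since it is the place where the outside hypothesis actually enters the argument.
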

Their proof is based on a lattice path construction and the Lindstr\"{o}m-Gessel-Viennot methodology \cite{Gessel-Viennot,Stembridge}. In this paper we generalize the concept of outside decompositions even further.
\subsection{Our main results}
We introduce the concept of {\em outside nested decompositions} of the skew shape $\lambda/\mu$ and our first main result is a generalization of Theorem~\ref{T:HG} with respect to any outside nested decomposition $\Phi=(\Theta_1,\Theta_2,\ldots,\Theta_g)$ of the skew shape $\lambda/\mu$.

For any such decomposition $\Phi=(\Theta_1,\Theta_2,\ldots,\Theta_g)$ of the skew shape $\lambda/\mu$ where $\Theta_i$ is a thickened strip for every $i$, if $r$ is the number of boxes that are contained in two distinct thickened strips of $\Phi$. Then, our main theorem provides a determinantal formula of the function $s_{\lambda/\mu}(X)p_{1^r}(X)$ with the Schur functions of thickened strips as entries. The precise statement is the following.
\begin{theorem}\label{T:thick}
If the skew diagram of $\lambda/\mu$ is edgewise connected. Then, for any outside nested decomposition $\Phi=(\Theta_1,\Theta_2,\ldots,\Theta_g)$ of the skew shape $\lambda/\mu$, we set
$r=\sum_{i=1}^g \vert\Theta_i\vert-\vert\lambda/\mu\vert$ that is the number of common special corners of $\Phi$ and we have
\begin{align}\label{E:HG}
p_{1^{r}}(X)\,s_{\lambda/\mu}(X)=
\det[s_{\Theta_i\#\Theta_j}(X)]_{i,j=1}^{g}\quad \mbox{ where }\, p_{1^{r}}(X)=(\sum_{i=1}^{\infty}x_i)^r,
\end{align}
$s_{\varnothing}(X)=1$ and $s_{\Theta_i\#\Theta_j}(X)=0$ if $\Theta_i\#\Theta_j$ is undefined. The function $p_{1^r}(X)$ is the power sum symmetric function index by the partition $(1^r)$ and $p_{1^r}(X)=1$ if $r=0$.
\end{theorem}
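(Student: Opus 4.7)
My plan is to generalize the lattice-path construction that Hamel and Goulden used for Theorem~\ref{T:HG} so that the determinant $\det[s_{\Theta_i\#\Theta_j}(X)]_{i,j=1}^{g}$ admits a positive combinatorial expansion in which the $r$ common special corners of $\Phi$ contribute an extra factor of $p_{1^r}(X)$. Heuristically, each thickened strip $\Theta_i$ behaves like a union of ordinary strips glued along the prescribed common special corners, so one expects the LGV machinery of \cite{Gessel-Viennot,HG} to apply after one allows the path bundles corresponding to different thickened strips to intersect in a controlled way.

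The first step is to attach to every thickened strip $\Theta_i$ a \emph{bundle} of weighted lattice paths, with starting vertices $\mathbf{A}_i$ placed along the southwestern boundary of $\Theta_i$ and ending vertices $\mathbf{B}_i$ placed along its northeastern boundary (one vertex per row or column crossed by the strip along its outer antidiagonals). Using the Jacobi--Trudi identity applied to each gluing $\Theta_i\#\Theta_j$, I would verify that this LGV set-up makes the generating function for path systems from $\mathbf{A}_i$ to $\mathbf{B}_j$ that do not intersect \emph{within} a thickened strip (with weight $x_k$ on each horizontal step in row $k$) equal to $s_{\Theta_i\#\Theta_j}(X)$, with the convention that this generating function vanishes when $\Theta_i\#\Theta_j$ is undefined. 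Once this is in place, the Lindstr\"om--Gessel--Viennot lemma interprets $\det[s_{\Theta_i\#\Theta_j}(X)]_{i,j=1}^{g}$ as a signed sum over global $g$-tuples of paths from $\bigsqcup_i\mathbf{A}_i$ to $\bigsqcup_i\mathbf{B}_i$.

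The next step is to set up a sign-reversing involution that cancels all path tuples except those whose pairwise intersections occur exactly at the common special corners. This is where the nestedness of $\Phi$ should be used in an essential way: the nested arrangement of the strips is what forces the unavoidable pairwise intersections to lie precisely at the $r$ prescribed cells, so that an involution swapping tails at the first \emph{unforced} intersection is well-defined and fixed-point free on the complement. The surviving tuples should then biject with pairs consisting of an SSYT $T$ of shape $\lambda/\mu$ together with an ordered $r$-tuple of alphabet entries, one for each common special corner; the weight of a surviving tuple factors as (weight of $T$)$\,\times\,$(product over common special corners of a free variable $x_k$), yielding the required $p_{1^r}(X)\,s_{\lambda/\mu}(X)$ after summation.

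I expect the construction of the lattice-path model for a thickened strip, and the proof that $s_{\Theta_i\#\Theta_j}(X)$ is exactly the LGV generating function for it, to be routine extensions of \cite{HG}. The main obstacle will be the involution: one must show that the nesting condition on $\Phi$ prevents any \emph{forced} intersection outside the common special corners, and that the tails of the involution can be swapped without leaving the legal set of paths for either thickened strip. Once this is handled, the identity $r=\sum_{i=1}^{g}|\Theta_i|-|\lambda/\mu|$ follows simply by counting boxes with multiplicity, and collecting weights finishes the proof.
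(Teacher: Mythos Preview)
Your outline has the right global architecture (LGV plus a tail-swapping involution), and indeed this is the route the paper takes. But two concrete pieces are missing, and without them the argument does not go through.

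First, a ``bundle of paths per thickened strip'' in the usual Jacobi--Trudi sense will not produce a $g\times g$ determinant with entries $s_{\Theta_i\#\Theta_j}(X)$; it produces a larger block determinant. What the paper actually does is encode each SSYT of a thickened strip by a single \emph{double lattice path} $P(u_j,v_i)=(p_{ji}^+,p_{ji}^-)$: a pair of paths sharing endpoints, where between lines $x=c$ and $x=c+1$ there can be both a horizontal and a diagonal step precisely when the content-$c$ diagonal of $\Theta_i\#\Theta_j$ hits a $2\times2$ block. This is the device that makes $s_{\Theta_i\#\Theta_j}(X)$ the generating function of a \emph{single} combinatorial object indexed by $(i,j)$, and hence makes the determinant genuinely $g\times g$. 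Your proposal does not isolate this idea, and the ``one vertex per row or column'' set-up you describe does not obviously collapse to it.

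Second, your description of the fixed points (``intersections occur exactly at the common special corners'') and of the involution (``swap tails at the first unforced intersection'') is too coarse. In the paper, the non-cancelling objects are not the non-crossing tuples themselves but a strictly larger class of \emph{separable} tuples; these are then put in bijection with pairs (non-crossing tuple, sequence $(a_1,\ldots,a_r)\in\mathbb{Z}_{>0}^r$), which is where the factor $p_{1^r}(X)$ comes from. Correspondingly, ``non-separable'' has two flavours: ordinary intersection at a single point, and failure at a so-called $\mathcal{C}$-pair attached to a special-corner content. The involution must treat these separately, and the $\mathcal{C}$-pair case itself splits into four sub-cases (2.1--2.4 in the paper) because a naive tail swap can destroy the admissibility conditions on the double paths (an up-vertical step may end up preceding a diagonal step, etc.). The nestedness of $\Phi$ is used exactly to guarantee that in each sub-case the specific surgery chosen returns a legal double lattice path with the same minimum. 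None of this is visible from ``swap tails at the first unforced intersection,'' and it is the genuine technical content of the proof.
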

When $r=0$ and all thickened strips $\Theta_i$ are strips, we retrieve Hamel and Goulden's theorem on the {\em outside decompositions} of the skew shape $\lambda/\mu$. With the help of Theorem~\ref{T:thick}, it suffices to find an outside nested decomposition with minimal number of thickened strips in order to reduce the order of the determinantal expression of the Schur function $s_{\lambda/\mu}(X)$.

Let $\vert\lambda/\mu\vert$ and $f^{\lambda/\mu}$ denote the number of boxes contained in the skew shape $\lambda/\mu$ and the number of standard Young tableaux of shape $\lambda/\mu$ with the entries from $1$ to $\vert\lambda/\mu\vert$ (similarly for $\vert\Theta_i\#\Theta_j\vert$ and $f^{\Theta_i\#\Theta_j}$). Then, by applying the exponential specialization on both sides of (\ref{E:HG}), one immediately gets
\begin{corollary}\label{C:thick2}
If the skew diagram of $\lambda/\mu$ is edgewise connected. Then, for any outside nested decomposition $\Phi=(\Theta_1,\Theta_2,\ldots,\Theta_g)$ of the skew shape $\lambda/\mu$, we have
\begin{align}\label{E:HG1}
\,f^{\lambda/\mu}=
\vert\lambda/\mu\vert!\,\det\left[(a_{i,j}!)^{-1}f^{\Theta_i\#\Theta_j}\right]_{i,j=1}^{g}
\quad \mbox{ where }\,
a_{i,j}=\vert\Theta_i\#\Theta_j\vert,
\end{align}
$f^{\varnothing}=1$ and $f^{\Theta_i\#\Theta_j}=0$ if $\Theta_i\#\Theta_j$ is undefined.
\end{corollary}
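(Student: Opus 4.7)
The plan is simply to apply the exponential specialization homomorphism $\mathrm{ex}\colon \Lambda \to \mathbb{Q}[t]$ to both sides of the identity in Theorem~\ref{T:thick}. Recall that $\mathrm{ex}$ is the ring homomorphism characterized by $\mathrm{ex}(p_1)=t$ and $\mathrm{ex}(p_n)=0$ for $n\geq 2$, and that on a skew Schur function it acts by $\mathrm{ex}(s_{\nu})=f^{\nu}\,t^{|\nu|}/|\nu|!$ for every skew shape $\nu$ (with $f^{\varnothing}=1$ by convention).

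First I would compute the left-hand side. Since $p_{1^r}=p_1^r$ we have $\mathrm{ex}(p_{1^r})=t^r$, and therefore
\[
\mathrm{ex}\bigl(p_{1^r}(X)\,s_{\lambda/\mu}(X)\bigr)
 \;=\; t^r\cdot f^{\lambda/\mu}\,\frac{t^{|\lambda/\mu|}}{|\lambda/\mu|!}
 \;=\; \frac{f^{\lambda/\mu}}{|\lambda/\mu|!}\,t^{r+|\lambda/\mu|}.
\]
Next I would compute the right-hand side. Because $\mathrm{ex}$ is a ring homomorphism it commutes with the determinant, so
\[
\mathrm{ex}\!\left(\det\bigl[s_{\Theta_i\#\Theta_j}(X)\bigr]_{i,j=1}^{g}\right)
 \;=\; \det\!\left[\,f^{\Theta_i\#\Theta_j}\,\frac{t^{a_{i,j}}}{a_{i,j}!}\,\right]_{i,j=1}^{g},
\]
where $a_{i,j}=|\Theta_i\#\Theta_j|$, with the convention that vanishing entries correspond to undefined $\Theta_i\#\Theta_j$.

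Finally I would extract the common factor of $t^{r+|\lambda/\mu|}$ from the determinant. Since Theorem~\ref{T:thick} is a homogeneous identity of symmetric functions of degree $r+|\lambda/\mu|$, every nonvanishing term $\pm\prod_i s_{\Theta_i\#\Theta_{\sigma(i)}}(X)$ in the Leibniz expansion has total degree $r+|\lambda/\mu|$, i.e.\ $\sum_i a_{i,\sigma(i)}=r+|\lambda/\mu|$ whenever the product is nonzero. Hence $t^{r+|\lambda/\mu|}$ pulls out of the determinant uniformly. Equating the two specialized expressions and cancelling this common power of $t$ yields
\[
\frac{f^{\lambda/\mu}}{|\lambda/\mu|!}
 \;=\; \det\!\left[(a_{i,j}!)^{-1}\,f^{\Theta_i\#\Theta_j}\right]_{i,j=1}^{g},
\]
which is exactly~(\ref{E:HG1}).

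There is no real obstacle: the only point worth double-checking is the homogeneity claim that licenses factoring $t^{r+|\lambda/\mu|}$ out of the determinant, and this is automatic from Theorem~\ref{T:thick} since both sides there are homogeneous of the same degree. Thus the corollary is an immediate consequence of Theorem~\ref{T:thick} under $\mathrm{ex}$.
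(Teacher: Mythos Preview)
Your proof is correct and follows essentially the same approach as the paper: apply the exponential specialization $\mathrm{ex}$ to both sides of Theorem~\ref{T:thick}, using $\mathrm{ex}(p_{1^r})=t^r$ and $\mathrm{ex}(s_\nu)=f^{\nu}t^{|\nu|}/|\nu|!$. The only cosmetic difference is that the paper sets $t=1$ (via $\mathrm{ex}_1$) rather than tracking and cancelling the common factor $t^{r+|\lambda/\mu|}$ as you do; your homogeneity remark makes explicit why this is legitimate.
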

It should be noted that the parameter $r$ vanishes in (\ref{E:HG1}). Our second main result is an enumeration of $m$-strip tableaux by applying Corollary~\ref{C:thick2}, which provides another proof of Baryshnikov and Romik's results in \cite{BR:07}. Baryshnikov and Romik \cite{BR:07} counted $m$-strip tableaux via extending the transfer operator approach due to Elkies \cite{Elkies}.
\subsection{Paper outline}
In subsection~\ref{ss:1} and \ref{ss:2} we introduce all necessary notations and definitions. In Section~\ref{S:proof1} we prove Theorem~\ref{T:thick} and Corollary~\ref{C:thick2}. In Section~\ref{E:app} we introduce the notion of $m$-strip tableaux and count the number of $m$-strip tableaux.
\subsection{Partitions and symmetric functions}\label{ss:1}
\begin{itemize}
\item A {\it partition} $\lambda$ of $n$, denoted by $\lambda\vdash n$, is a sequence $\lambda=(\lambda_1,\lambda_2,\ldots,\lambda_m)$ of non-negative integers such that $\lambda_1\ge\lambda_2\ge \cdots\ge\lambda_m\ge 0$ and their sum is $n$. The non-zero $\lambda_i$ are called the {\em parts} of $\lambda$ and the number of parts is the length of $\lambda$, denoted by $\ell(\lambda)$. Let $m_i=m_i(\lambda)$ denote the number of parts of $\lambda$ that equal $i$, we simply write $\lambda=(1^{m_1}2^{m_2}\cdots)$.
\item Given a partition $\lambda=(\lambda_1,\lambda_2,\ldots,\lambda_m)$, the {\it standard diagram} of $\lambda$ is a left-justified array of $\lambda_1+\lambda_2+\cdots+\lambda_m$ boxes with $\lambda_1$ in the first row, $\lambda_2$ in the second row, and so on.
\item A {\it skew diagram} of $\lambda/\mu$ (also called {\em a skew shape $\lambda/\mu$}) is the difference of two skew diagrams where $\mu\subseteq\lambda$. Note that the standard shape $\lambda$ is just the skew shape $\lambda/\mu$ when $\mu=\varnothing$.
\item The {\em content} of a box $\alpha$ in a skew shape $\lambda/\mu$ equals $t-s$ if the box $\alpha$ is in column $t$ from the left and row $s$ from the top of the skew shape $\lambda/\mu$. We refer to box $\alpha$ as box $(s,t)$ and $(s,t)$ is called its {\em coordinate}. A {\em diagonal of content $c$} in a skew diagram is a set of boxes with content $c$ in a skew diagram.
\item A skew diagram `starts' at a box (called {\em the starting box}) if that box is the bottommost and leftmost box in the skew diagram, and a skew diagram `ends' at a box (called {\em the ending box}) if that box is the topmost and rightmost box in the skew diagram.
\item A {\em semistandard Young tableau} (resp. {\em standard Young tableau}) of skew shape $\lambda/\mu$ is a filling of the boxes of the skew diagram of $\lambda/\mu$ with positive integers such that the entries strictly increase down each column and weakly (resp. strictly) increase left to right across each row.
\end{itemize}
In a semistandard Young tableau $T$ we use $T(\alpha)$ to represent the positive integer in the box $\alpha$ of $T$. The {\em Schur function}, $s_{\lambda/\mu}(X)$, in the variables $X=(x_1,x_2,\ldots)$, is given by
\begin{align*}
s_{\lambda/\mu}(X)=\sum_{T}\prod_{\alpha\in\lambda/\mu}x_{T(\alpha)},
\end{align*}
where the summation is over all semistandard Young tableaux $T$ of shape $\lambda/\mu$ and $\alpha\in \lambda/\mu$ means that $\alpha$ ranges over all boxes in the skew diagram of $\lambda/\mu$. In particular, $s_{\varnothing}(X)=1$. The {\em complete symmetric functions} $h_k(X)$ are defined by
\begin{align*}
h_k(X)=\sum_{1\le i_1\le \cdots \le i_k}\prod_{j=i_1}^{i_k}x_{j}\,\,\mbox{ if }\, k\ge 1,\,\, h_0(X)=1 \,\mbox{ and }\,h_k(X)=0 \,\mbox{ if }\, k<0.
\end{align*}
The Jacobi-Trudi identity is a determinantal expression of Schur function $s_{\lambda/\mu}(X)$ in terms of complete symmetric functions $h_k(X)$; see \cite{Macdonald,Stanley:ec2}.
\begin{theorem}[Jacobi-Trudi identity \cite{Jacobi}]\label{T:Jacobi-Trudi}
Let $\lambda/\mu$ be a skew shape partition, let $\lambda=(\lambda_1,\ldots,\lambda_k)$ and $\mu=(\mu_1,\ldots,\mu_k)$ have at most $k$ parts. Then
\begin{align*}
s_{\lambda/\mu}(X)=\det[h_{\lambda_i-\mu_j-i+j}(X)]_{i,j=1}^{k}.
\end{align*}
\end{theorem}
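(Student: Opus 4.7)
The plan is to prove the Jacobi--Trudi identity via the Lindstr\"om--Gessel--Viennot (LGV) lemma, which is the cleanest modern approach and also thematically fits the paper since Theorem~\ref{T:HG} is itself proved by LGV methods. The strategy is to interpret each side combinatorially as a weighted sum over families of lattice paths and to match the non-intersecting families with semistandard Young tableaux.

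First I would set up the lattice-path model. In $\mathbb{Z}^2$ with unit east ($E$) and north ($N$) steps, assign to every $E$-step at height $y$ the weight $x_y$, and to every $N$-step the weight $1$. Place sources $u_i=(\mu_i-i+1,\,0)$ and sinks $v_j=(\lambda_j-j+1,\,M)$ for $i,j=1,\dots,k$ and $M$ sufficiently large (eventually let $M\to\infty$). A single path $u_i\to v_j$ uses exactly $\lambda_j-\mu_i+i-j$ east steps distributed among rows $1,\dots,M$, and summing their weights yields exactly $h_{\lambda_j-\mu_i+i-j}(X)$ in the limit, with the convention $h_{n}=0$ for $n<0$ capturing the impossibility case.

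Next I would apply the LGV lemma to the $k$-tuples of sources and sinks. This gives
\begin{align*}
\det\bigl[\,h_{\lambda_j-\mu_i+i-j}(X)\bigr]_{i,j=1}^{k}
\;=\;\sum_{\sigma\in S_k}\operatorname{sgn}(\sigma)\sum_{(P_1,\dots,P_k)}\prod_{\ell=1}^{k}w(P_\ell),
\end{align*}
the inner sum being over path systems with $P_\ell\colon u_\ell\to v_{\sigma(\ell)}$. The standard sign-reversing involution that swaps the tails of two paths at their first intersection shows that only non-intersecting systems survive, and any non-intersecting system forces $\sigma=\mathrm{id}$ because the sources and sinks are arranged in strictly decreasing $x$-coordinate order (since $\mu_i-i+1$ and $\lambda_j-j+1$ are strictly decreasing sequences). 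This is the step I expect to be the most delicate: one must check that the strict decreases $\mu_1-1>\mu_2-2>\cdots$ and $\lambda_1-1>\lambda_2-2>\cdots$ really do force the identity matching, and that the involution is well defined at the level of weighted paths.

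Finally I would establish the bijection between non-intersecting systems with $\sigma=\mathrm{id}$ and semistandard Young tableaux of shape $\lambda/\mu$: reading off the $y$-coordinates of the east steps of $P_i$ in left-to-right order produces the $i$-th row of a tableau, and the non-intersecting condition translates precisely into strict increase down columns while the automatic left-to-right order gives weak increase along rows. Summing weights over all such tableaux gives $s_{\lambda/\mu}(X)$, so the determinant equals $s_{\lambda/\mu}(X)$. Since the stated determinant $\det[h_{\lambda_i-\mu_j-i+j}(X)]$ is the transpose of the one just computed and transposition preserves determinants, the theorem follows.
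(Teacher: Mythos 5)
Your proof is correct and is the standard Gessel--Viennot argument; note, however, that the paper does not prove this statement at all---it quotes the Jacobi--Trudi identity as a classical result, citing precisely the lattice-path proof you reconstruct (\cite{Jacobi,Gessel-Viennot}). So there is nothing in the paper to diverge from, but your route is worth comparing to the paper's own machinery: the Jacobi--Trudi identity is exactly the special case of Theorem~\ref{T:HG} in which the outside decomposition $\phi=(\theta_1,\ldots,\theta_k)$ consists of the $k$ rows of $\lambda/\mu$ (each row is trivially a strip, with starting box on the left perimeter and ending box on the right perimeter), since then $\theta_i\#\theta_j$ is a single row of $\lambda_i-\mu_j-i+j$ boxes and $s_{\theta_i\#\theta_j}(X)=h_{\lambda_i-\mu_j-i+j}(X)$; your sources and sinks are the $p(\theta_j)$, $q(\theta_i)$ of Definition~\ref{D:operator1} in this degenerate case, and your tail-swap involution is the $r=0$ instance of the involution $\omega$ of Subsection~\ref{ss:invo} (only Case~$1$ ever occurs). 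Two small points to tidy: with sources placed at height $0$ you should either forbid east steps on the line $y=0$ or shift the weight to $x_{y+1}$ so that each path from $u_i$ to $v_j$ really generates $h_{\lambda_j-\mu_i+i-j}(X)$ over the alphabet $x_1,x_2,\ldots$; and the verification that non-intersection of $P_i$ and $P_{i+1}$ is equivalent to column-strictness deserves one explicit line (the box in row $i$, column $c$ is the east step of $P_i$ crossing $x$-interval $[c-i,c-i+1]$, and $T(i,c)\ge T(i+1,c)$ would force $P_{i+1}$ to cross $P_i$). Neither is a genuine gap.
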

The classical Aitken formula for the number of standard Young tableaux of skew shape can be directly obtained by applying the exponential specialization on the Jacobi-Trudi identity; see Chapter 7 of \cite{Stanley:ec2}. We denote by $\vert\lambda/\mu\vert$ the number of boxes contained in the skew diagram of $\lambda/\mu$ and denote by $f^{\lambda/\mu}$ the number of standard Young tableaux of shape $\lambda/\mu$ with the entries from $1$ to $\vert\lambda/\mu\vert$.
\begin{corollary}[Aitken formula]
Let $\lambda/\mu$ be a skew shape partition, let $\lambda=(\lambda_1,\ldots,\lambda_k)$ and $\mu=(\mu_1,\ldots,\mu_k)$ have at most $k$ parts. Then
\begin{align}\label{E:Aitken}
f^{\lambda/\mu}=\vert\lambda/\mu\vert!\det\left[\frac{1}{(\lambda_i-\mu_j-i+j)!}\right]_{i,j=1}^{k}.
\end{align}
\end{corollary}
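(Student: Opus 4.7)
The plan is to deduce the Aitken formula as a direct corollary of the Jacobi-Trudi identity (Theorem~\ref{T:Jacobi-Trudi}) by applying the \emph{exponential specialization}, namely the ring homomorphism $\mathrm{ex}$ from the ring of symmetric functions to $\mathbb{Q}[t]$ characterized by $\mathrm{ex}(p_1)=t$ and $\mathrm{ex}(p_n)=0$ for $n\geq 2$. Equivalently, $\mathrm{ex}$ sends $h_k(X)$ to $t^k/k!$ for $k\geq 0$ (and to $0$ for $k<0$, matching the convention $h_k(X)=0$ there).

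First, I would record the specialization of a Schur function. Writing any homogeneous symmetric function $f$ of degree $n$ in the power-sum basis as $f=\sum_\lambda c_\lambda p_\lambda$, one sees immediately that $\mathrm{ex}(f)=c_{(1^n)}\,t^n$. Since $s_{\lambda/\mu}$ is homogeneous of degree $\vert\lambda/\mu\vert$ and the coefficient of $p_{(1^{\vert\lambda/\mu\vert})}$ in $s_{\lambda/\mu}$ equals $f^{\lambda/\mu}/\vert\lambda/\mu\vert!$ (a standard fact, see Proposition~7.8.4 of \cite{Stanley:ec2}), one obtains
\[\mathrm{ex}(s_{\lambda/\mu}(X)) \;=\; \frac{f^{\lambda/\mu}}{\vert\lambda/\mu\vert!}\,t^{\vert\lambda/\mu\vert}.\]

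Next, I would apply $\mathrm{ex}$ entry-wise to the Jacobi-Trudi determinant. Each entry $h_{\lambda_i-\mu_j-i+j}(X)$ becomes $t^{\lambda_i-\mu_j-i+j}/(\lambda_i-\mu_j-i+j)!$. In the Leibniz expansion of the resulting determinant, a term indexed by $\sigma\in S_k$ carries total $t$-exponent $\sum_i(\lambda_i-\mu_{\sigma(i)}-i+\sigma(i))=\sum_i\lambda_i-\sum_i\mu_i=\vert\lambda/\mu\vert$, since the shifts $-i+\sigma(i)$ cancel across $\sigma$. Hence every term in the expansion carries the same factor $t^{\vert\lambda/\mu\vert}$, which can be pulled out to give
\[\mathrm{ex}\bigl(\det[h_{\lambda_i-\mu_j-i+j}(X)]_{i,j=1}^{k}\bigr)\;=\;t^{\vert\lambda/\mu\vert}\,\det\!\left[\frac{1}{(\lambda_i-\mu_j-i+j)!}\right]_{i,j=1}^{k}.\]
Equating with the preceding display and cancelling the common factor $t^{\vert\lambda/\mu\vert}$ yields \eqref{E:Aitken}.

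No step here is genuinely difficult: the only bookkeeping point is verifying that the total $t$-exponent is constant across all permutation terms of the determinantal expansion, which is immediate from $\sum_i\sigma(i)=\sum_i i$. This is precisely why the corollary is flagged as being ``directly obtained'' from the Jacobi-Trudi identity.
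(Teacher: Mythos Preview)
Your argument is correct and matches the paper's approach exactly: the paper simply states that the Aitken formula is ``directly obtained by applying the exponential specialization on the Jacobi-Trudi identity,'' referring to Chapter~7 of \cite{Stanley:ec2}, which is precisely what you carry out. Your bookkeeping check that the $t$-exponent is constant across all permutation terms is the only point worth making explicit, and you have done so.
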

It is clear that the order of the determinant in the Jacobi-Trudi identity and in the Aitken formula equals the number $\ell(\lambda)$ of parts in $\lambda$. Using (\ref{E:Aitken}) to compute $f^{\lambda/\mu}$ becomes difficult when the partitions $\lambda$ and $\mu$ are large, even when their difference $\lambda/\mu$ is small.
\subsection{Outside nested decompositions}\label{ss:2}
We start with the strips and outside decompositions. Hamel and Goulden described the notion of an {\em outside decomposition} of the skew shape $\lambda/\mu$, which generalizes Lascoux and Pragacz's rim ribbon decomposition \cite{LP}. With the help of Hamel and Goulden's theorem \cite{HG}, for any skew shape $\lambda/\mu$, one can reduce the order of the determinant in the Jacobi-Trudi identity to the number of strips contained in any outside decomposition of skew shape $\lambda/\mu$.

Two boxes are said to be edgewise connected if they share a common edge. A skew diagram $\theta$ is said to be {\em edgewise connected} if $\theta$ is an edgewise connected set of boxes.
\begin{definition}[strip]\label{D:ribbon}
A skew diagram $\theta$ is a {\em strip} if $\theta$ is edgewise connected and it contains no $2\times 2$ blocks of boxes.
\end{definition}
\begin{remark}
The strips in Definition~\ref{D:ribbon} are called `border strips' by Macdonald \cite{Macdonald} and are called `ribbons' by Lascoux and Pragacz \cite{LP}. We adopt the name `strips' from \cite{HG}.
\end{remark}
\begin{definition}[outside decomposition \cite{HG}]\label{D:podec}
Suppose that $\theta_1,\theta_2,\ldots,\theta_g$ are strips of a skew diagram of $\lambda/\mu$ and every strip has a starting box on the left or bottom perimeter of the diagram and an ending box on the right or top perimeter of the diagram. Then we say the totally ordered set $\phi=(\theta_1,\theta_2,\ldots,\theta_g)$ is an {\em outside decomposition} of $\lambda/\mu$ if the union of these strips is the skew diagram of $\lambda/\mu$ and every two strips $\theta_i,\theta_j$ in $\phi$ are disjoint, that is, $\theta_i$ and $\theta_j$ have no boxes in common.
\end{definition}
\begin{remark}
The rim ribbon decomposition of $\lambda/\mu$ introduced by Lascoux and Pragacz \cite{LP} is an outside decomposition with minimal number of strips; see \cite{Stanley:02} and \cite{YYZ:05}.
\end{remark}
\begin{example}
See Figure~\ref{F:pic-3} for an outside decomposition and two non-outside decompositions where all boxes are marked by black dots. The first two decompositions in Figure~\ref{F:pic-3} are not outside decompositions since the strip $\theta_1=(5,1)$ of the left one has a starting box neither on the left nor on the bottom perimeter of the skew diagram and the strip $\theta_2=(3)$ of the middle one has an ending box neither on the right nor on the top perimeter of the skew diagram.
\end{example}
\begin{figure}[htbp]
\begin{center}
\includegraphics[scale=0.8]{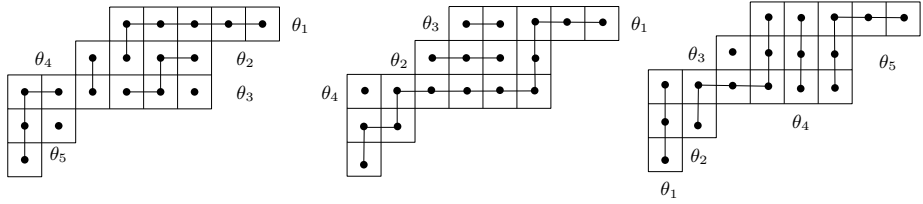}
\caption{Two non-outside decompositions (left and middle) and one outside decomposition (right) of skew shape $(8,6,6,2,1)/(3,2)$.
\label{F:pic-3}}
\end{center}
\end{figure}
We next introduce the notion of {\em thickened strips} and we will decompose the skew diagram of $\lambda/\mu$ into a sequence of thickened strips, in order to extend Hamel and Goulden's theorem \cite{HG} on the determinantal expression of the Schur function $s_{\lambda/\mu}(X)$. Our extension is motivated by the enumeration of $(2k+1)$-strip tableaux where any outside decomposition of $(2k+1)$-strip diagram with $n$ columns consists of at least $n$ strips (see Subsection~\ref{ss:odd}). So the order of the determinantal expression of $s_{\lambda/\mu}(X)$ can not be further reduced by applying Hamel and Goulden's theorem (Theorem~\ref{T:HG}).
\begin{definition}[thickened strip]\label{D:to}
A skew diagram $\Theta$ is a thickened strip if $\Theta$ is edgewise connected and it neither contains a $3\times 2$ block of boxes nor a $2\times 3$ block of boxes.
\end{definition}
\begin{remark}
By definition the only difference between strips and thickened strips is that thickened strips could have some $2\times 2$ blocks of boxes; see Figure~\ref{F:pic-9}.
\end{remark}
\begin{figure}[htbp]
\begin{center}
\includegraphics[scale=0.8]{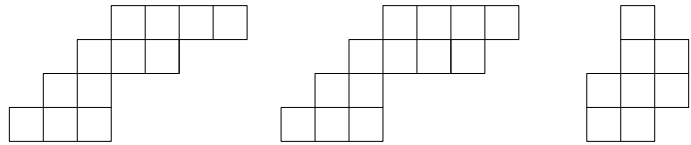}
\caption{The left one is a thickened strip, while the middle one and the right one are not thickened strips.
\label{F:pic-9}}
\end{center}
\end{figure}
We next define the {\em corners} and the {\em special corners} of a thickened strip $\Theta_i$ because in contrast to the outside decompositions, we allow two thickened strips in an outside nested decomposition to have special corners in common. In what follows, note that the box $(s,t)$ always refers to the box with coordinate $(s,t)$ in the skew diagram of $\lambda/\mu$.
\begin{definition}(corner, special corner)\label{D:SC}
When a thickened strip $\Theta_i$ has more than one box, we define that {\em a corner} $(s,t)$ of a thickened strip $\Theta_i$ is an upper corner or a lower corner, where {\em an upper corner} $(s,t)$ of $\Theta_i$ is a box $(s,t)$ such that neither the box $(s-1,t)$ nor the box $(s,t-1)$ is contained in $\Theta_i$. Likewise, {\em a lower corner} $(s,t)$ of $\Theta_i$ is a box $(s,t)$ such that neither the box $(s+1,t)$ nor the box $(s,t+1)$ is contained in $\Theta_i$. We say that a corner $(s,t)$ of a thickened strip $\Theta_i$ is {\em special} if the corner $(s,t)$ satisfies one of the following conditions:
\begin{enumerate}
\item the corner $(s,t)$ is the starting box or the ending box of $\Theta_i$;
\item the corner $(s,t)$ is contained in a $2\times 2$ block of boxes of $\Theta_i$.
\end{enumerate}
\end{definition}
\begin{example}
Consider the thickened strip in Figure~\ref{F:pic-9} (the left one), the only corner that is not special in this thickened strip is the box $(2,3)$.
\end{example}
Now we are ready to present the outside thickened strip decomposition.
\begin{definition}[outside thickened strip decomposition]\label{D:todec}
Suppose that $\Theta_1,\Theta_2,\ldots,\Theta_g$ are thickened strips in the skew diagram of $\lambda/\mu$ and every thickened strip has a starting box on the left or bottom perimeter of the diagram and an ending box on the right or top perimeter of the diagram.
Then we say the totally ordered set $\Phi=(\Theta_1,\Theta_2,\ldots,\Theta_g)$ is an {\em outside thickened strip decomposition} of the skew diagram of $\lambda/\mu$ if the union of the thickened strips $\Theta_i$ of $\Phi$ is the skew diagram of $\lambda/\mu$, and for all $i,j$, one of the following statements is true:
\begin{enumerate}
\item two thickened strips $\Theta_i$ and $\Theta_j$ are disjoint, that is, $\Theta_i$ and $\Theta_j$ have no boxes in common;
\item one thickened strips $\Theta_j$ is on the right side or the bottom side of the other thickened strip $\Theta_i$ and they have some special corners in common, where each common special corner $(s,t)$ is a lower corner of $\Theta_i$ and an upper corner of $\Theta_j$.
\end{enumerate}
Every special corner of a thickened strip in $\Phi$ is called a {\em special corner of $\Phi$} and every common special corner of any two distinct thickened strips of $\Phi$ is called a {\em common special corner of $\Phi$}.
\end{definition}
\begin{remark}
If $\Theta_i$ has only one box $(s,t)$ and box $(s,t)$ is also a special corner of $\Theta_j$. Then the outside thickened strip decomposition $\Phi$ is essentially the same to the one without $\Theta_i$. So we exclude this scenario.
\end{remark}
\begin{example}
Figure~\ref{F:pic2-01} (middle, right) shows an outside thickened strip decomposition $\Phi=(\Theta_1,\Theta_2,\Theta_3)$ of the skew diagram of $(6,6,6,4)/(3,1)$ where the boxes $(4,1)$ and $(3,3)$ are the common special corners of $\Theta_2$ and $\Theta_3$. The box $(2,5)$ is the only common special corner of $\Theta_1$ and $\Theta_2$. In Figure~\ref{F:pic2-01} and Figure~\ref{F:pic-12} every common special corner of $\Phi$ is marked by a black square, while other boxes are marked by black dots.
\end{example}
\begin{figure}[htbp]
\begin{center}
\includegraphics[scale=0.8]{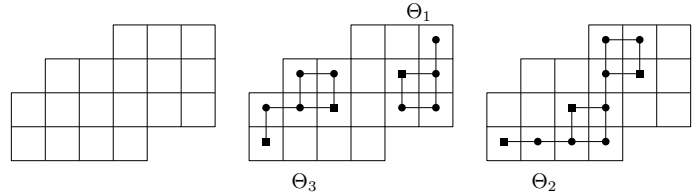}
\caption{An outside thickened strip decomposition $\Theta_1,\Theta_2,\Theta_3$ (middle, right) of the skew diagram $(6,6,6,4)/(3,1)$ (left).\label{F:pic2-01}}
\end{center}
\end{figure}
\begin{example}
Figure~\ref{F:pic-12} (middle, right) gives an outside thickened strip decomposition $\Phi=(\Theta_1,\Theta_2,\Theta_3,\Theta_4,\Theta_5)$ of the skew diagram of $(8,8,8,7,4)/(3,1)$ where all the common special corners of $\Phi$ are boxes $(2,3),(2,5),(4,5),(3,6),(2,7),(1,8)$.
\end{example}
\begin{figure}[htbp]
\begin{center}
\includegraphics[scale=0.8]{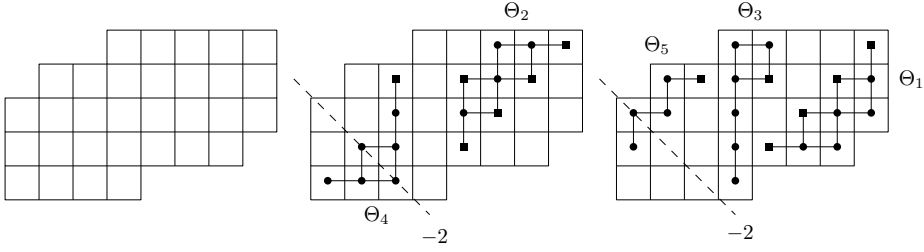}
\caption{An outside thickened strip decomposition of a skew diagram (left) where the thickened strips $\Theta_2,\Theta_4$ and $\Theta_1,\Theta_3,\Theta_5$ are drawn separately (middle, right). The dashed line with integer $-2$ represents the diagonal of content $-2$.\label{F:pic-12}}
\end{center}
\end{figure}
We observe that, unlike the strips in any outside decomposition, the thickened strips in any outside thickened strip decomposition $\Phi$ are {\em not necessarily nested}; see Definition~\ref{D:nesttodec}. However, the {\em nested property} of thickened strips in an outside thickened strip decomposition is of central importance in the proof of Theorem~\ref{T:thick}. In view of this, we need to introduce the {\em enriched diagrams} and the {\em directions} of all boxes in the skew shape $\lambda/\mu$ to describe the nested property of thickened strips.
\begin{definition}[enriched diagram]\label{D:enrich}
Suppose that $\Phi=(\Theta_1,\Theta_2,\ldots,\Theta_g)$ is an outside thickened strip decomposition of the skew shape $\lambda/\mu$, for every $i$ such that $1\le i\le g$, and for box $(s,t)$ that is the starting box or the ending box of $\Theta_i$, we shall add new boxes to $\Theta_i$ according to the following rules:
\begin{enumerate}
\item if box $(s,t)$ is a lower corner of $\Theta_i$ and an upper corner of some other thickened strip in $\Phi$, we add boxes $(s,t-1),(s-1,t),(s-1,t-1)$ that are not contained in $\Theta_i$ to $\Theta_i$;
\item if box $(s,t)$ is an upper corner of $\Theta_i$ and a lower corner of some other thickened strip in $\Phi$, we add boxes $(s,t+1),(s+1,t),(s+1,t+1)$ that are not contained in $\Theta_i$ to $\Theta_i$.
\end{enumerate}
where all the coordinates of new boxes are {\em relative} to the coordinates of the boxes in the skew diagram of $\lambda/\mu$. We denote by $D(\Theta_i)$ the diagram after adding the new boxes to $\Theta_i$ and we call $D(\Theta_i)$ {\em an enriched thickened strip}. If neither the starting box nor the ending box of $\Theta_i$ satisfies $(1)$ or $(2)$, then $D(\Theta_i)=\Theta_i$. {\em An enriched diagram} $D(\Phi)$ is the union of all enriched thickened strips $D(\Theta_i)$ for every $\Theta_i$ of $\Phi$.
\end{definition}
\begin{example}
In Figure~\ref{F:pic2-01} the box $(4,1)$ contained in $\Theta_3$ and $\Theta_2$ is the only box that satisfies conditions $(1)$ and $(2)$ of Definition~\ref{D:enrich}. So we add the boxes $(4,0),(3,0)$ to $\Theta_3$ and add the boxes $(5,1),(5,2)$ to $\Theta_2$; see Figure~\ref{F:pic2-1} where all newly added boxes are colored grey.
\end{example}
\begin{figure}[htbp]
\begin{center}
\includegraphics[scale=0.8]{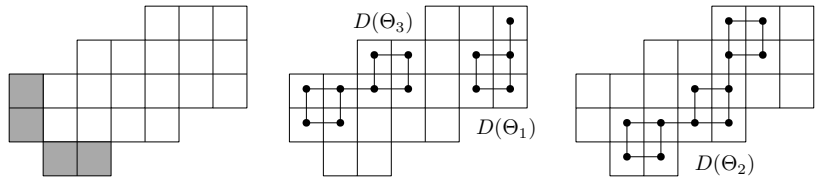}
\caption{The enriched diagram $D(\Phi)$ (left) and the enriched thickened strips (middle, right) where $\Phi=(\Theta_1,\Theta_2,\Theta_3)$ is given in Figure~\ref{F:pic2-01}.\label{F:pic2-1}}
\end{center}
\end{figure}
\begin{remark}
The enriched diagram $D(\Phi)$ may not be a skew diagram; see Figure~\ref{F:pic2-1} and \ref{F:pic-13}.
\end{remark}
\begin{figure}[htbp]
\begin{center}
\includegraphics[scale=0.8]{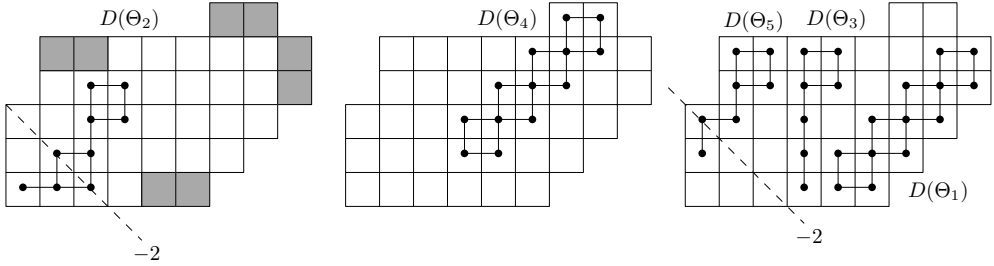}
\caption{An enriched diagram $D(\Phi)$ with enriched thickened strips $D(\Theta_i)$ where the outside thickened strip decomposition $\Phi$ is given in Figure~\ref{F:pic-12} and the dashed line with integer $-2$ represents the diagonal of content $-2$. \label{F:pic-13}}
\end{center}
\end{figure}
With the help of enriched diagram $D(\Phi)$, one can define the directions of all boxes other than the special corners of $\Phi$ in the skew diagram of $\lambda/\mu$. For every box $(s,t)$ of the skew diagram of $\lambda/\mu$, if box $(s,t)$ is {\em not} a special corner of $\Phi$, then box $(s,t)$ is contained in only one thickened strip $\Theta_i$ of $\Phi$. We may define the {\em direction} of box $(s,t)$ in the enriched diagram $D(\Phi)$ according to the following rules:
\begin{enumerate}
\item if both boxes $(s-1,t)$ and $(s,t+1)$ are contained in the enriched thickened strip $D(\Theta_i)$ of $D(\Phi)$, then we say the box $(s,t)$ {\em goes right and up};
\item if not both boxes $(s-1,t)$ and $(s,t+1)$ are contained in $D(\Theta_i)$, then we say that the box $(s,t)$ {\em goes right} or {\em goes up} if $(s,t+1)$ or $(s-1,t)$ is contained in $D(\Theta_i)$;
\item if neither box $(s,t+1)$ nor box $(s-1,t)$ is contained in $D(\Theta_i)$, then box $(s,t)$ must be the ending box of $\Theta_i$, thus it must be on the top or right perimeter of the skew diagram of $\lambda/\mu$, and we say that box $(s,t)$ {\em goes up} if it is on the top perimeter of $\lambda/\mu$ and that box $(s,t)$ {\em goes right} if it is on the right perimeter but not on the top perimeter of the skew diagram of $\lambda/\mu$. 
\end{enumerate}
\begin{definition}[outside nested decomposition]\label{D:nesttodec}
An outside thickened strip decomposition $\Phi$ is an outside nested decomposition if $\Phi=(\Theta_1,\Theta_2,\ldots,\Theta_g)$ is {\em nested}, that is,
for all $c$, one of the following statements is true:
\begin{enumerate}
\item all boxes of content $c$ all go right or all go up;
\item all boxes of content $c$ or all boxes of content $(c+1)$ are all special corners of $\Phi$.
\end{enumerate}
\end{definition}
\begin{remark}
It should be noted that all boxes of content $(c+1)$ are special corners of $\Phi$ if and only if all boxes of content $c$ all go right and up. Definition~\ref{D:nesttodec} is analogous to the nested property of the strips in any outside decomposition where all boxes on the same diagonal of the skew shape $\lambda/\mu$ all go right or all go up; see \cite{CYY,HG}.
\end{remark}
\begin{example}
By Definition~\ref{D:nesttodec} the outside thickened strip decomposition in Figure~\ref{F:pic-12} is not an outside nested decomposition because two boxes on the diagonal of content $-2$ are special corners, but one box goes right, while the outside thickened strip decomposition in Figure~\ref{F:pic2-01} is an outside nested decomposition because all boxes on the diagonal of content $-3,0,3$ are all special corners, all boxes on the diagonal of content $1,4,5$ all go up, and all boxes on the diagonal of content $-2$ all go right.
\end{example}
Hamel and Goulden \cite{HG} defined a non-commutative operation $\#$ for every two strips of an outside decomposition $\phi=(\theta_1,\theta_2,\ldots,\theta_g)$ of the skew shape $\lambda/\mu$, also when the skew shape $\lambda/\mu$ is edgewise disconnected. Subsequently, Chen, Yan and Yang \cite{CYY} came up with the notion of cutting strips so as to derive a transformation theorem for Hamel and Goulden's determinantal formula, in which one of the key ingredients is a bijection between the outside decompositions of a given skew diagram and the cutting strips.

Based on these previous work, we will extend the non-commutative operation $\#$ for every two thickened strips of an outside nested decomposition $\Phi=(\Theta_1,\Theta_2,\ldots,\Theta_g)$ of the skew shape $\lambda/\mu$. In order to provide a simple definition of $\Theta_i\#\Theta_j$, we need to introduce the {\em thickened cutting strips}, which are called `cutting strips' for any outside decomposition in \cite{CYY}.
\begin{definition}[thickened cutting strips]\label{D:tcs}
The {\em thickened cutting strip} $H(\Phi)$ with respect to an outside nested decomposition $\Phi=(\Theta_1,\Theta_2,\ldots,\Theta_g)$ is a thickened strip obtained by successively superimposing the enriched thickened strips $D(\Theta_1),D(\Theta_2),\ldots,D(\Theta_g)$ of $D(\Phi)$ along the diagonals. 
\end{definition}
We say that {\em a box $\alpha$ of the thickened cutting strip $H(\Phi)$ has content $c$} if box $\alpha$ is on the diagonal of content $c$ in the skew diagram of $\lambda/\mu$ and we represent each box of the thickened cutting strip $H(\Phi)$ as follows:
\begin{enumerate}
\item box $[c]$ denotes the unique box of $H(\Phi)$ with content $c$;
\item box $[c,+]$ and box $[c,-]$ denote the upper and the lower corner of $H(\Phi)$ with content $c$ if they are contained in a $2\times 2$ block of boxes in $H(\Phi)$.
\end{enumerate}
Because of the nested property in Definition~\ref{D:nesttodec}, the thickened cutting strip $H(\Phi)$ with respect to any outside nested decomposition $\Phi$ is a thickened strip. 
\begin{example}
Consider the outside nested decomposition $\Phi=(\Theta_1,\Theta_2,\Theta_3)$ in Figure~\ref{F:pic2-01}, the thickened cutting strip with respect to $\Phi$ is constructed in Figure~\ref{F:pic-15}, where the dashed lines represent the diagonals of content $-3,0,3$ respectively.
\end{example}
\begin{figure}[htbp]
\begin{center}
\includegraphics[scale=0.8]{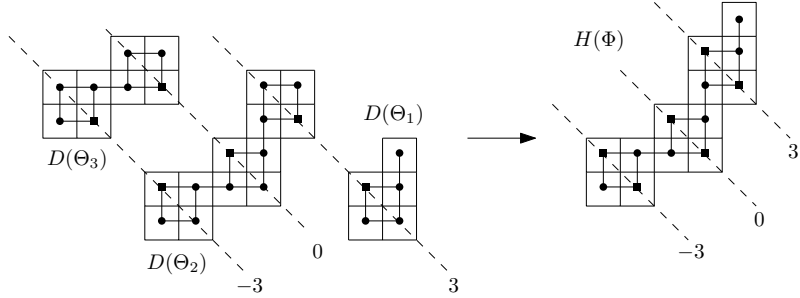}
\caption{The thickened cutting strip $H(\Phi)$ (right) with respect to the outside nested decomposition $(\Theta_1,\Theta_2,\Theta_3)$ in Figure~\ref{F:pic2-01} and the enriched thickened strips $D(\Theta_1),D(\Theta_2),D(\Theta_3)$ are given in Figure~\ref{F:pic2-1}.\label{F:pic-15}}
\end{center}
\end{figure}
\begin{definition}[$\Theta_i\,\#\,\Theta_j$]\label{D:operator1}
If the skew diagram of $\lambda/\mu$ is edgewise connected, let $\Phi=(\Theta_1,\Theta_2,\ldots$ $\Theta_g)$ be an outside nested decomposition of skew shape $\lambda/\mu$, and let $H(\Phi)$ be the thickened cutting strip with respect to $\Phi$. For each thickened strip $\Theta_i$ in $\Phi$, if $c_i$ is the content of the starting box of $\Theta_i$, the starting box $P(\Theta_i)$ of $\Theta_i$ is given as below:
\begin{enumerate}
\item $p(\Theta_i)=[c_i]$ if the starting box is not a special corner of $\Phi$;
\item $p(\Theta_i)=[c_i,+]$ if the starting box is a special corner of $\Phi$ and an upper corner of $\Theta_i$;
\item $p(\Theta_i)=[c_i,-]$ if the starting box is a special corner of $\Phi$ and a lower corner of $\Theta_i$.
\end{enumerate}
Likewise, we denote the ending box of $\Theta_i$ by $q(\Theta_i)$ if we replace $p(\Theta_i)$ by $q(\Theta_i)$ and replace the starting box by the ending box from the above notations. Then $\Theta_i$ forms a segment of the thickened cutting strip $H(\Phi)$ starting with the box $p(\Theta_i)$ and ending with the box $q(\Theta_i)$, which is denote by $[p(\Theta_i),q(\Theta_i)]$. We may extend the notion to $[p(\Theta_j),q(\Theta_i)]$ in the following way:
\begin{enumerate}
\item if $c_j<c_i$ or $p(\Theta_j)=q(\Theta_i)$, then $[p(\Theta_j),q(\Theta_i)]$ is a segment of $H(\Phi)$ starting with the box $p(\Theta_j)$ and ending with the box $q(\Theta_i)$;
\item if $p(\Theta_j)$ and $q(\Theta_i)$ are in the same diagonal of $H(\Phi)$, or $c_j=c_i+1$, then $[p(\Theta_j),q(\Theta_i)]=\varnothing$;
\item if $c_j>c_i+1$, then $[p(\Theta_j),q(\Theta_i)]$ is undefined.
\end{enumerate}
For any two thickened strips $\Theta_i$ and $\Theta_j$ of $\Phi$, the thickened strip $\Theta_i\#\Theta_j$ is defined as $[p(\Theta_j),q(\Theta_i)]$.
\end{definition}
\begin{remark}
We only need to deal with the outside nested thickened strip decompositions of an edgewise connected skew diagram because the Schur function of any edgewise disconnected diagram is a product of Schur functions of edgewise connected components.
\end{remark}
\begin{remark}
Since $\Phi$ is an outside nested decomposition, we can identify every thickened strip $\Theta_i$ as a segment of $H(\Phi)$ starting with the box $p(\Theta_i)$ and ending with the box $q(\Theta_i)$.
\end{remark}
\begin{example}\label{E:op1}
Consider the outside nested decomposition $\Phi=(\Theta_1,\Theta_2,\Theta_3)$ in Figure~\ref{F:pic2-01}, one has $\Theta_1\#\Theta_2=[p(\Theta_2),q(\Theta_1)]=[[-3,+],[5]]=(5,5,5,4,4)/(4,3,3,2)$, that is, a segment of the thickened cutting strip $H(\Phi)$ in Figure~\ref{F:pic-15} starting with box $[-3,+]$ and ending with box with content $[5]$. Similarly, the thickened strips obtained by the operation $\#$ are given below:
\[
\begin{array}{lll}
\Theta_1\#\Theta_3=(4,4,4,3,3,1)/(3,2,2,1), & \Theta_2\#\Theta_1=(2,2), &\\
\Theta_2\#\Theta_3=(4,4,3,3,1)/(2,2,1), & \Theta_3\#\Theta_1=\varnothing, & \Theta_3\#\Theta_2=(4,4)/(2).
\end{array}
\]
\end{example}
If $\Phi$ is an outside decomposition where all thickened strips $\Theta_i$ are strips, then the starting box of any strip $\Theta_i$ and the ending box of any strip $\Theta_j$ are $p(\Theta_i)=[c_i]$ and $q(\Theta_j)=[c_j]$.
\begin{remark}
In \cite{HG} Hamel and Goulden noticed that any order of the strips in an outside decomposition play the same role. Chen, Yan and Yang \cite{CYY} also showed Hamel and Goulden's theorem in terms of the canonical order of strips and our extension (Theorem~\ref{T:thick}) also works for any total order of the thickened strips in an outside nested decomposition.
\end{remark}
\section{Proof of Theorem~\ref{T:thick} and Corollary~\ref{C:thick2}}\label{S:proof1}
Since it is convenient to construct an involution in the context of lattice paths, we choose to represent semistandard Young tableaux of thickened strip shape in the language of lattice paths. Our proof of Theorem~\ref{T:thick} consists of three main steps.

In the first step we build a one-to-one correspondence between semistandard Young tableaux of thickened strip shape to {\em double lattice paths}, which is based on a bijection between semistandard Young tableaux of strip shape and lattice paths in \cite{HG}. In the second step we introduce the {\em separable} $g$-tuples of double lattice paths and show that the generating function of all separable $g$-tuples of double lattice paths is $p_{1^r}(X)s_{\lambda/\mu}(X)$. In the last step we will construct a sign-reversing and weight-preserving involution $\omega$ on all non-separable $g$-tuples of double lattice paths, so that only the separable ones contribute to the determinant $\det[s_{\Theta_i\#\Theta_j}(X)]_{i,j=1}^g$ in Theorem~\ref{T:thick}.

We will prove Corollary~\ref{C:thick2} by using the exponential specializations of the Schur functions and power sum symmetric functions.
\subsection{From Semistandard Young tableaux to double lattice paths}
First we recall that $H(\Phi)$ is the thickened cutting strip which corresponds to $\Phi$ (see Definition~\ref{D:tcs}) and $\Theta_i\#\Theta_j$ is given in Definition~\ref{D:operator1}. For any $i,j$, we will introduce the double lattice path $P(u_j,v_i)$ in Definition~\ref{D:plattice}.
\begin{definition}[double lattice paths]\label{D:plattice}
Under the assumption of Theorem~\ref{T:thick}, for every $i,j$, the {\em double lattice paths} with respect to $\Phi$, are pairs $P(u_j,v_i)=(p_{ji}^+,p_{ji}^-)$ of lattice paths where $p_{ji}^+$ and $p_{ji}^-$ start at $u_j$ and end at $v_i$. The starting point $u_j$ and the ending point $v_i$ are fixed as below:
\begin{enumerate}
\item if the starting box $(s,t)$ of $\Theta_j$ is a common special corner of $\Phi$, and
    \begin{itemize}
    \item[] if box $(s,t)$ is a lower corner of $\Theta_j$, then $u_j=(t-s,1)$;
    \item[] otherwise if box $(s,t)$ is an upper corner of $\Theta_j$, then $u_j=(t-s,\infty)$;
    \end{itemize}
\item[] if the starting box $(s,t)$ of $\Theta_j$ is not a common special corner of $\Phi$, and
    \begin{itemize}
    \item[] if box $(s,t)$ is on the left perimeter of the skew shape $\lambda/\mu$, then $u_j=(t-s,1)$;
    \item[] otherwise if box $(s,t)$ is only on the bottom perimeter of the skew shape $\lambda/\mu$, then $u_j=(t-s,\infty)$;
    \end{itemize}
\item if the ending box $(\mu,\nu)$ of $\Theta_i$ is a common special corner of $\Phi$, and
    \begin{itemize}
    \item[] if box $(\mu,\nu)$ is a lower corner of $\Theta_i$, then $v_i=(\nu-\mu+1,1)$;
    \item[] otherwise if box $(\mu,\nu)$ is an upper corner of $\Theta_i$, then $v_i=(\nu-\mu+1,\infty)$;
    \end{itemize}
\item[] if the ending box $(\mu,\nu)$ of $\Theta_i$ is not a common special corner of $\Phi$, and
    \begin{itemize}
    \item[] if box $(\mu,\nu)$ is on the right perimeter of the skew shape $\lambda/\mu$, then $v_i=(\nu-\mu+1,\infty)$;
    \item[] otherwise if box $(\mu,\nu)$ is only on the top perimeter of the skew shape $\lambda/\mu$, then $v_i=(\nu-\mu+1,1)$.
    \end{itemize}
\end{enumerate}
Furthermore, the lattice paths $p_{ji}^+$ and $p_{ji}^-$ consist of four types of steps: an up-vertical step $\uparrow(0,1)$, a down-vertical step $\downarrow(0,-1)$, a horizontal step $\rightarrow(1,0)$ and a diagonal step $\searrow(1,-1)$, which satisfy the conditions
\begin{enumerate}
\setcounter{enumi}{2}
\item a down-vertical step $(0,-1)$ must not precede an up-vertical step $(0,1)$ and must not precede a horizontal step $(1,0)$;
\item an up-vertical step $(0,1)$ must not precede a down-vertical step $(0,-1)$ and must not precede a diagonal step $(1,-1)$.
\end{enumerate}
Moreover, there is a horizontal step $(1,0)$ of the lattice path $p_{ji}^+$ or $p_{ji}^-$ between lines $x=c$ and $x=c+1$ if one of $(5),(6)$ holds:
\begin{enumerate}
\setcounter{enumi}{4}
    \item a box of content $c-1$ is to the left of a box of content $c$ in $\Theta_i\#\Theta_j$;
    \item the starting box of $\Theta_j$ has content $c$ and $u_j=(c,1)$.
\end{enumerate}
There is a diagonal step $(1,-1)$ of the lattice path $p_{ji}^+$ or $p_{ji}^-$ between lines $x=c$ and $x=c+1$ if one of $(7),(8)$ holds:
\begin{enumerate}
\setcounter{enumi}{6}
\item a box of content $c-1$ is right below a box of content $c$ in $\Theta_i\#\Theta_j$;
\item the starting box of $\Theta_j$ has content $c$ and $u_j=(c,\infty)$.
\end{enumerate}
We connect all these non-vertical steps by up-vertical and down-vertical steps so that every non-vertical step of $p_{ji}^+$ is either above or the same as the one of $p_{ji}^-$ between any lines $x=c$ and $x=c+1$.
\end{definition}
\begin{remark}
When $\Phi$ is an outside decomposition, for all $i$ and $j$, the double lattice path $(p_{ji}^+,p_{ji}^-)=P(u_j,v_i)$ with respect to $\Phi$ is a lattice path, that is, $p_{ji}^+=p_{ji}^-$ where all steps between any lines $x=c$ and $x=c+1$ are all horizontal or all diagonal; see \cite{HG}.
\end{remark}
Because $\Phi$ is an outside nested decomposition, by $(1)$-$(2)$ in Definition~\ref{D:plattice}, all starting points and all ending points are all different. Once the starting point $u_j$ and the ending point $v_i$ are chosen, the shape of any double lattice path $P(u_j,v_i)$ is fixed, that is, whether any non-vertical step of $P(u_j,v_i)$ is horizontal or diagonal, is determined by $\Theta_i\#\Theta_j$. This allows us to identify the Schur function $s_{\Theta_i\#\Theta_j}(X)$ as the generating function of all weighted double lattice paths from $u_j$ to $v_i$ in Subsection~\ref{ss:invo}.

For every $i$ and $j$, let $\CMcal{P}(u_j,v_i)$ represent the set of all double lattice paths from $u_j$ to $v_i$, and let $\CMcal{T}_{\Theta_i\#\Theta_j}$ represent the set of all semistandard Young tableaux of thickened strip shape $\Theta_i\#\Theta_j$. We next establish that
\begin{lemma}\label{L:bij3}
There is a bijection $f$ between the set $\CMcal{T}_{\Theta_i\#\Theta_j}$ and the set $\CMcal{P}(u_j,v_i)$.
\end{lemma}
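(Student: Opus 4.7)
The plan is to adapt Hamel and Goulden's bijection between semistandard Young tableaux of strip shape and single lattice paths \cite{HG} to the thickened setting: when $\Theta_i\#\Theta_j$ has no $2\times 2$ block (so that it is an ordinary strip) the two components $p_{ji}^+$ and $p_{ji}^-$ will coincide and we recover HG; when a $2\times 2$ block is present the two paths must split, so that the pair collectively encodes the four entries of the block, and the dominance $p_{ji}^+\ge p_{ji}^-$ inside the block will record the strict column inequalities that a single path cannot.

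For the forward map $f$ I would read $\Theta_i\#\Theta_j$ diagonal by diagonal from the starting box (encoded by $u_j$) toward the ending box (encoded by $v_i$). At a box $\alpha$ of content $c$ that does not lie in a $2\times 2$ block, both paths take the same non-vertical step (horizontal or diagonal as prescribed by (5)--(8) in Definition~\ref{D:plattice}) at height $T(\alpha)$, with up/down-vertical fillers inserted exactly as in \cite{HG} so that (3)--(4) are satisfied. When a $2\times 2$ block is crossed, the top row of the block contributes the non-vertical steps of $p_{ji}^+$, at heights given by its entries, while the bottom row contributes those of $p_{ji}^-$; conditions (5)--(8) decide whether each of the four steps is horizontal or diagonal. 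Finally, when the extremal box of $\Theta_j$ or $\Theta_i$ is a common special corner, cases (1) and (3) of Definition~\ref{D:plattice} force $p_{ji}^+$ and $p_{ji}^-$ to begin or end at different lattice heights ($y=1$ or $y=\infty$), which is precisely what is needed to encode the upper and lower entries at that corner.

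The verification that $f(T)\in\mathcal{P}(u_j,v_i)$ has two ingredients. Conditions (5)--(8) are built into the construction; the row-weak and column-strict inequalities of $T$ translate, after inserting the vertical fillers, into the ordering restrictions (3)--(4) on successive step types; and the dominance $p_{ji}^+\ge p_{ji}^-$ in every $x$-interval is equivalent to the pair of strict column inequalities on each $2\times 2$ block, with equality outside the blocks where both paths coincide. The inverse $f^{-1}$ simply reads off the $y$-coordinates of the non-vertical steps to recover the entries of $T$: outside a $2\times 2$ block the common step height of $p^+$ and $p^-$ is the entry of the single box of that content, while inside a block the $p^+$-height and the $p^-$-height are the top-row and bottom-row entries respectively. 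Conditions (3)--(4) together with the dominance then give back the row and column SSYT inequalities of $T$.

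The main obstacle, and the place where care is really needed, is the bookkeeping around the endpoints. Each of the four bullet sub-cases in (1) and (2) of Definition~\ref{D:plattice} (left versus bottom perimeter for the starting box, right versus top perimeter for the ending box, plus the common-special-corner modifications) must be matched with the correct extraction rule for the extremal entries of $T$; in particular, when an endpoint is a common special corner of $\Phi$, the two heights $y=1$ and $y=\infty$ correctly separate the upper-corner and lower-corner entries that share the same content. Once this finite case analysis is carried out, $f$ and $f^{-1}$ are mutually inverse and the bijection follows.
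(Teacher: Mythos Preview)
Your overall strategy---placing one non-vertical step per box at the height of its entry, sorting the (at most two) steps in each $x$-interval into an upper path $p^{+}$ and a lower path $p^{-}$, and checking conditions (3)--(4) via the SSYT inequalities---is exactly the paper's approach.

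Two details are garbled, however. First, your row-to-path assignment is backwards. In a $2\times 2$ block the two boxes of content $c$ are the upper corner $(s,t)$ (top row) and the lower corner $(s+1,t+1)$ (bottom row), and in any SSYT $T(s,t)<T(s+1,t+1)$. Since no $2\times 3$ or $3\times 2$ block occurs, the upper corner has only the box $(s+1,t)$ beneath it as its content-$(c-1)$ neighbour, so it produces a \emph{diagonal} step ending at height $T(s,t)$; the lower corner has only $(s+1,t)$ to its left, so it produces a \emph{horizontal} step at height $T(s+1,t+1)$. Thus it is the \emph{bottom}-row box whose step lies higher and hence belongs to $p^{+}$, contrary to what you wrote; your own dominance claim $p^{+}\ge p^{-}$ already contradicts your assignment.

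Second, your reading of the endpoints is not what Definition~\ref{D:plattice} says. Both $p_{ji}^{+}$ and $p_{ji}^{-}$ start at the \emph{same} point $u_{j}$ and end at the \emph{same} point $v_{i}$; they are never forced ``to begin or end at different lattice heights.'' The common-special-corner clauses in (1)--(2) only decide whether that shared endpoint has $y$-coordinate $1$ or $\infty$ (and hence whether the extremal step is horizontal or diagonal), not separate the two components. Once these two points are corrected, your argument coincides with the paper's (terse) proof, including the handling of the empty and undefined cases.
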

\begin{proof}
If $\Theta_i\#\Theta_j=\varnothing$, according to $(5)$-$(8)$ of Definition~\ref{D:plattice}, $\CMcal{P}(u_j,v_i)$ contains only one double lattice path $P(u_j,v_i)$ that has no non-vertical steps from $u_j$ to $v_i$, which corresponds to the empty tableau from $\CMcal{T}_{\varnothing}$.



If $\Theta_i\#\Theta_j$ is undefined, the starting point $u_j$ is on the right hand side of the ending point $v_i$, so by Definition~\ref{D:plattice} there exist no double lattice paths from $u_j$ to $v_i$, that is, the set $\CMcal{P}(u_j,v_i)$ is undefined, which corresponds to the undefined set $\CMcal{T}_{\Theta_i\#\Theta_j}$.

Otherwise, given a semistandard Young tableau $T_{\Theta_i\#\Theta_j}$ of thickened strip shape $\Theta_i\#\Theta_j$, we build the corresponding double lattice path $f(T_{\Theta_i\#\Theta_j})=(p_{ji}^+,p_{ji}^-)=P(u_j,v_i)$ starting with $u_j$ and ending at $v_i$. For every box $\alpha$ in $T_{\Theta_i\#\Theta_j}$, suppose that the box $\alpha$ of content $c$ has entry $q$ in $T_{\Theta_i\#\Theta_j}$. Then we put a horizontal step from $(c,q)$ to $(c+1,q)$ if one of $(1),(2)$ is true:
\begin{enumerate}
\item a box of content $c-1$ is to the left of $\alpha$ in $\Theta_i\#\Theta_j$;
\item $\alpha$ is the starting box of $\Theta_j$ and $u_j=(c,1)$.
\end{enumerate}
We put a diagonal step from $(c,q+1)$ to $(c+1,q)$ if one of $(3),(4)$ is true:
\begin{enumerate}
\setcounter{enumi}{2}
\item a box of content $c-1$ is right below $\alpha$ in $\Theta_i\#\Theta_j$;
\item $\alpha$ is the starting box of $\Theta_j$ and $u_j=(c,\infty)$.
\end{enumerate}
We connect all these non-vertical steps by up-vertical and down-vertical steps. In this way we get a pair $(p_{ji}^+,p_{ji}^-)=P(u_j,v_i)$ of lattice paths where every non-vertical step of $p_{ji}^+$ is either above or the same as the one of $p_{ji}^-$. By construction in the lattice path $p_{ji}^+$ or $p_{ji}^-$, there is no down-vertical step preceding an up-vertical step and there is no up-vertical step preceding a down-vertical step. Since $T_{\Theta_i\#\Theta_j}$ is a semistandard Young tableau, there is no down-vertical step preceding a horizontal step because otherwise, the entries along each row of $T_{\Theta_i\#\Theta_j}$ is not weakly increasing from left to right. Similarly there is no up-vertical step preceding a diagonal step because otherwise, the entries along each column of $T_{\Theta_i\#\Theta_j}$ is not strictly decreasing from bottom to top. So by Definition~\ref{D:plattice} the path $(p_{ji}^+,p_{ji}^-)=P(u_j,v_i)$ is a double lattice path. The map $$f:\CMcal{T}_{\Theta_i\#\Theta_j}\rightarrow\CMcal{P}(u_j,v_i).$$
is a bijection because the above process is reversible.
\end{proof}
In Lemma~\ref{L:bij3} we observe that the point $(c+1,q)$ is the ending point of some non-vertical step of $P(u_j,v_i)$ if and only if a box of content $c$ has entry $q$ in $T_{\Theta_i\#\Theta_j}$ where $f(T_{\Theta_i\#\Theta_j})=P(u_j,v_i)$.
\begin{example}
For $i=1,2,3$, consider the thickened strip $\Theta_i$ of the skew shape $(6,6,6,4)/(3,1)$ in Figure~\ref{F:pic2-01}, the corresponding double lattice path $P(u_2,v_3)\in \CMcal{P}(u_2,v_3)$ of the thickened strip tableau $T_{\Theta_3\#\Theta_2}$ is given in Figure~\ref{F:pic2-7} where all integers represent the $y$-th coordinates of all ending points from the non-vertical steps in $P(u_2,v_3)$.
We have discussed the shape of $\Theta_3\#\Theta_2$ in Example~\ref{E:op1}. Since the starting box $p(\Theta_2)=[-3,+]$ of $\Theta_3\#\Theta_2$ is an upper corner of $\Theta_2$, according to $(1)$ in Definition~\ref{D:plattice}, the starting point $u_2$ is $(-3,\infty)$ and we put a diagonal step from $(-3,3)$ to $(-2,2)$ in Figure~\ref{F:pic2-6} because of $(8)$ in Definition~\ref{D:plattice}. Similarly, since the ending box of $\Theta_3\#\Theta_2$ is $q(\Theta_3)=[1]$, the ending point $v_3$ is $(2,1)$.

In addition, the corresponding double lattice path $P(u_1,v_3)\in\CMcal{P}(u_1,v_3)$ of the empty thickened strip tableau $T_{\Theta_3\#\Theta_1}=T_{\varnothing}$ consists of only vertical steps from $u_1=(2,\infty)$ to $v_3=(2,1)$.
\end{example}
\begin{figure}[htbp]
\begin{center}
\includegraphics[scale=0.85]{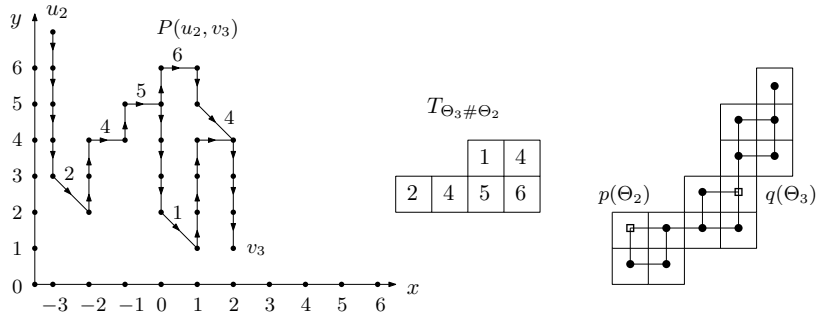}
\caption{A double lattice path $P(u_2,v_3)$ (left), the corresponding thickened strip tableau $T_{\Theta_3\#\Theta_2}=f^{-1}(P(u_2,v_3))$ (middle) and the thickened cutting strip $H(\Phi)$ where the starting box $p(\Theta_2)=[-3,+]$ and the ending box $q(\Theta_3)=[1]$ of $\Theta_3\#\Theta_2$ are marked with empty squares (right). \label{F:pic2-7}}
\end{center}
\end{figure}

With the help of Lemma~\ref{L:bij3} we can establish the relation between semistandard Young tableaux of skew shape $\lambda/\mu$ and $g$-tuples of double lattice paths. Under the assumption of Theorem~\ref{T:thick}, for any $\pi\in S_g$, we write $\pi=\pi_1\pi_2\cdots \pi_g$ and consider any $g$-tuple (\ref{E:good}) of double lattice paths, where the starting points and the ending points of all steps are called {\em points} of (\ref{E:good}).
\begin{definition}[non-crossing]\label{D:noncr}
Consider a $g$-tuple
\begin{align}\label{E:good}
(P(u_{\pi_1},v_{1}),P(u_{\pi_2},v_{2}),\ldots, P(u_{\pi_g},v_{g}))
\end{align}
of double lattice paths where $P(u_{\pi_i},v_i)\in \CMcal{P}(u_{\pi_i},v_i)$. Then (\ref{E:good}) is {\em non-crossing} if for any $i$ and $j$, $P(u_{\pi_i},v_i)$ and $P(u_{\pi_j},v_j)$ are non-crossing. This holds if and only if
\begin{enumerate}
\item $P(u_{\pi_i},v_i)$ and $P(u_{\pi_j},v_j)$ are non-intersecting, that is, have no points in common;
\item $P(u_{\pi_j},v_j)$ is on the top side of $P(u_{\pi_i},v_i)$ and they have some points in common, where each common point $(c+1,q)$ occurs only when one diagonal step of $P(u_{\pi_j},v_j)$ and one horizontal step of $P(u_{\pi_i},v_i)$ end at the same point $(c+1,q)$. 
\end{enumerate}
Otherwise $P(u_{\pi_i},v_i)$ and $P(u_{\pi_j},v_j)$ are crossing and (\ref{E:good}) is crossing. If (\ref{E:good}) is non-crossing, we call every common point of any two double lattice paths in (\ref{E:good}) a {\em touchpoint} of (\ref{E:good}).
\end{definition}
\begin{remark}
When $\pi=\mbox{id}=(1)(2)\cdots(g)$, two double lattice paths $P(u_i,v_i)$ and $P(u_j,v_j)$ are non-crossing if and only if two semistandard Young tableaux $T_{\Theta_i}$ and $T_{\Theta_j}$ are disjoint or have the same entry in every common special corner of $\Theta_i$ and $\Theta_j$, where $f(T_{\Theta_q})=P(u_q,v_q)$ for $q\in\{i,j\}$.
\end{remark}
\begin{example}
The triple $(P(u_1,v_1),P(u_2,v_2),P(u_3,v_3))$ of double lattice paths given in Figure~\ref{F:pic2-6} where the $y$-coordinates of $u_1,v_1,u_2$ are all infinity, is non-crossing and all touchpoints have coordinates $(-2,3),(1,4),(4,3)$.
\end{example}
\begin{lemma}\label{L:compatible}
If a $g$-tuple (\ref{E:good}) of double lattice path is non-crossing, then $\pi$ must be the identity permutation, that is, $\pi=\mbox{id}=(1)(2)\cdots(g)$.
\end{lemma}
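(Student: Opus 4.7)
The plan is to reduce the statement to a Lindström--Gessel--Viennot-type planarity argument: I will show that the starting points $u_1,\ldots,u_g$ and the ending points $v_1,\ldots,v_g$ inherit compatible linear orderings from the geometry of the outside nested decomposition $\Phi$, and that any non-identity permutation forces a genuine crossing that cannot be accommodated by the relaxed non-crossing condition of Definition~\ref{D:noncr}.

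First, I would establish that the starting boxes of $\Theta_1,\ldots,\Theta_g$ occur in a well-defined linear order as one traverses the left/bottom perimeter of $\lambda/\mu$ (say, starting from the top-left corner and proceeding clockwise down the left edge and then rightward along the bottom edge), and symmetrically for the ending boxes along the right/top perimeter. Using Definitions~\ref{D:todec} and~\ref{D:nesttodec}, together with the construction of the thickened cutting strip $H(\Phi)$, I would argue that, after fixing conventions, the two perimeter traversals encounter the strips in the same relative order. Translating through the recipe in Definition~\ref{D:plattice}, the lattice-plane points $u_1,\ldots,u_g$ (on $y=1$ or $y=\infty$, according to whether the starting box lies on the left or bottom perimeter) acquire a linear order along the ``input side,'' and the points $v_1,\ldots,v_g$ acquire the matching linear order along the ``output side.''

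Next, I would assume for contradiction that $\pi \neq \mathrm{id}$ and choose indices $a<b$ with $\pi_a > \pi_b$. By the compatible orderings just established, the paths $P(u_{\pi_a},v_a)$ and $P(u_{\pi_b},v_b)$ connect ``swapped'' pairs of endpoints, and by a standard planarity argument they must share at least one lattice point in their interiors. I would then show that no such shared point can satisfy the local configuration required by a touchpoint in Definition~\ref{D:noncr}(2): that condition forces a specific upper/lower relationship at each common point (one path contributing the diagonal step and the other the horizontal step terminating at the same grid point), and this relationship is dictated by the nested structure of $\Phi$ and by the canonical pairing $u_i\leftrightarrow v_i$. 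A swap at $(a,b)$ reverses the required local upper/lower ordering and hence produces a genuine crossing rather than an allowed touchpoint, yielding the desired contradiction.

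The main obstacle will be the second step: the relaxed non-crossing notion permits distinct double lattice paths to share points at common special corners of $\Phi$, so a naive ``planar paths must cross'' argument is not enough. One must carry out a careful local analysis at each common special corner, exploiting the nested property of Definition~\ref{D:nesttodec} to pin down which path sits above the other, and then verify that the above/below configuration forced by $\pi_a>\pi_b$ is incompatible with the touchpoint geometry encoded in Definition~\ref{D:noncr}(2). Only after this local rigidity is established does the global conclusion $\pi=\mathrm{id}$ follow.
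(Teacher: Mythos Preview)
Your overall strategy matches the paper's: impose compatible linear orders on the $u_i$ and on the $v_i$ coming from the perimeter of $\lambda/\mu$, then argue that any inversion in $\pi$ forces a crossing. The paper carries this out by defining the order $\prec$ explicitly via the coordinates of the $u_i$ and $v_i$ (distinguishing $y=1$ from $y=\infty$ and then comparing $x$-coordinates), showing $u_s\prec u_i\Leftrightarrow v_s\prec v_i$, and then asserting that for an inversion the crossing follows from a direct case check on the nine combinations of order conditions.

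Where your plan diverges is in invoking Definition~\ref{D:nesttodec} and the thickened cutting strip $H(\Phi)$. Neither is needed, and the paper's remark immediately after the proof explicitly isolates Definition~\ref{D:todec} as the only structural input. The compatibility of the two perimeter orders comes from clause~(2) of Definition~\ref{D:todec} (one thickened strip lies to the right or below the other), which already forces the starting boxes and the ending boxes to appear in the same relative order along the two perimeters; no appeal to the nested property or to $H(\Phi)$ is required.

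Your identification of the touchpoint issue as the main obstacle is also overstated. Condition~(2) of Definition~\ref{D:noncr} is a \emph{global} constraint: one double lattice path must lie on the top side of the other throughout, with common points only of the prescribed diagonal-over-horizontal type. Once you have $u_{\pi_s}\prec u_{\pi_t}$ together with $v_t\prec v_s$, the positions of the four endpoints on the lines $y=1$ and $y=\infty$ already preclude either path from sitting globally above the other; this is exactly the case analysis the paper alludes to, and it is an endpoint argument rather than a local analysis at common special corners. In particular, the nested property of Definition~\ref{D:nesttodec} plays no role in this lemma.
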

\begin{proof}
We shall prove the equivalent statement, namely, if $\pi\in S_g$ and $\pi\ne\mbox{id}$, then any $g$-tuple (\ref{E:good}) of double lattice path is crossing.

First we consider a total order $\prec$ of all starting points $u_1,u_2,\ldots,u_g$ and a total order $\prec$ of all ending points $v_1,v_2,\ldots,v_g$ of the double lattice paths. For every $i$, let $x(u_i)$ and $y(u_i)$ denote the $x$-th coordinate and $y$-th coordinate of point $u_i$, similarly for $x(v_i)$ and $y(v_i)$. We recall that $y(u_i),y(v_i)\in \{1,\infty\}$ according to Definition~\ref{D:plattice}. We define $u_s\prec u_i$ if and only if one of the following conditions is true:
\begin{enumerate}
\item $\infty=y(u_s)>y(u_i)=1$;
\item $y(u_s)=y(u_i)=\infty$ and $x(u_s)>x(u_i)$;
\item $y(u_s)=y(u_i)=1$ and $x(u_s)<x(u_i)$.
\end{enumerate}
We define $v_s\prec v_i$ if and only if one of the following conditions is true:
\begin{enumerate}
\setcounter{enumi}{3}
\item $\infty=y(v_s)>y(v_i)=1$;
\item $y(v_s)=y(v_i)=\infty$ and $x(v_s)<x(v_i)$;
\item $y(v_s)=y(v_i)=1$ and $x(v_s)>x(v_i)$.
\end{enumerate}
We claim that for any $i$ and $s$, $u_s\prec u_i$ holds if and only if $v_s\prec v_i$ holds. The essential reason for this is the fact that $\Phi$ is an outside thickened strip decomposition (Definition~\ref{D:todec}), so when we read the boxes on the bottom perimeter and the left perimeter of the skew shape $\lambda/\mu$ in the right-to-left and bottom-to-top order, the starting box of $\Theta_s$ comes earlier than the starting box of $\Theta_i$ if and only if $u_s\prec u_i$ holds. Since one thickened strip is on the right side or the bottom side of the other thickened strip; see Definition~\ref{D:todec}, when we read the boxes on the right perimeter and the top perimeter of the skew shape $\lambda/\mu$ in the bottom-to-top and right-to left order, the ending box of $\Theta_s$ comes earlier than the ending box of $\Theta_i$ if and only if $v_s\prec v_i$ holds. This implies that for any $i$ and $s$, $u_s\prec u_i$ holds if and only if $v_s\prec v_i$ holds.

Second, for any $\pi$ such that $\mbox{id}\ne\pi\in S_g$, there exist two integers $s$ and $t$ such that $u_{\pi_s}\prec u_{\pi_t}$ and $v_t\prec v_s$ because otherwise it contradicts the assumption $\pi\ne \mbox{id}$. We wish to show that $P(u_{\pi_s},v_s)$ and $P(u_{\pi_t},v_t)$ are crossing, which can be proved by discussing all cases when one of the previous conditions $(1)$-$(3)$ for $u_{\pi_s}\prec u_{\pi_t}$ is true, and one of the previous conditions $(4)$-$(6)$ for $u_{t}\prec u_{s}$ is true. So we conclude that if $\pi\in S_g$ and $\pi\ne \mbox{id}$, then (\ref{E:good}) is crossing.
\end{proof}
\begin{remark}
Here note that we need the condition that $\Phi$ is an outside thickened strip decomposition. Lemma~\ref{L:compatible} actually verifies the condition of Stembridge's theorem on the non-intersecting lattice paths \cite{Stembridge}. Though Stembridge considered only the non-intersecting lattice paths, his theorem is still applicable to the non-crossing double lattice paths.
\end{remark}
\begin{proposition}\label{L:bij5}
Under the assumption of Theorem~\ref{T:thick}, there is a bijection between semistandard Young tableaux of skew shape $\lambda/\mu$ and non-crossing $g$-tuples of double lattice paths with $r$ touchpoints.
\end{proposition}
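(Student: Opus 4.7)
The plan is to define the forward map $\varphi$ sending an SSYT $T$ of shape $\lambda/\mu$ to the $g$-tuple $(P(u_1,v_1),\ldots,P(u_g,v_g))$, where $P(u_i,v_i) = f(T|_{\Theta_i})$ and $f$ is the bijection from Lemma~\ref{L:bij3}. The restriction $T|_{\Theta_i}$ is a valid semistandard Young tableau of thickened strip shape $\Theta_i$, because the row-weak and column-strict conditions are inherited from $T$, and at every common special corner of $\Phi$ the restrictions $T|_{\Theta_i}$ and $T|_{\Theta_j}$ automatically agree on the shared entry, which is precisely what will give rise to a touchpoint.

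Next I would verify that $\varphi(T)$ is non-crossing with exactly $r$ touchpoints. For each common special corner $(s,t)$ of $\Phi$---say a lower corner of $\Theta_i$ and an upper corner of $\Theta_j$---set $c = t-s$ and $q = T(s,t)$. Using the step-placement rules from the proof of Lemma~\ref{L:bij3}, together with the starting/ending point conventions of Definition~\ref{D:plattice}, one checks that $P(u_i,v_i)$ ends a horizontal step at $(c+1,q)$ while $P(u_j,v_j)$ ends a diagonal step at $(c+1,q)$, producing exactly the touchpoint configuration of Definition~\ref{D:noncr}(2). Hence each common special corner yields one touchpoint, for $r$ touchpoints in total. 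To rule out any further intersection of $P(u_i,v_i)$ and $P(u_j,v_j)$, I would argue contrapositively: any additional crossing would force two non-vertical steps to meet outside a touchpoint configuration, which upon unwinding $f$ would produce either two distinct boxes of the same content on the same row of $T$ with conflicting entries, or a row/column monotonicity violation of $T$ on the boundary between $\Theta_i$ and $\Theta_j$. The nested property of $\Phi$ ensures the directions of the boxes on each diagonal are uniform enough to make this diagonal-by-diagonal case analysis go through.

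For the inverse, suppose $(P_1,\ldots,P_g)$ is non-crossing with exactly $r$ touchpoints. Apply $f^{-1}$ to each $P_i$ to obtain $T_i \in \CMcal{T}_{\Theta_i}$. By Definition~\ref{D:noncr}(2), each touchpoint arises from a horizontal-meets-diagonal configuration that, by the same step-placement rules as above, can occur only at a box matching a common special corner of $\Phi$; combined with the assumption that there are exactly $r$ touchpoints, this forces a bijection between touchpoints and common special corners, with the $T_i$'s agreeing on every common special corner. Gluing the $T_i$'s therefore produces a well-defined filling $T$ of $\lambda/\mu$ with $\varphi(T) = (P_1,\ldots,P_g)$. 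The remaining task is to check that $T$ is indeed semistandard, for which the nontrivial cases concern pairs of boxes sharing a row or column but lying in different thickened strips $\Theta_i$ and $\Theta_j$; I would handle these cases by showing that any SSYT-violating configuration translates into a crossing of $P_i$ and $P_j$, contradicting the hypothesis.

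The main obstacle is the translation between the SSYT condition on $T$ and the non-crossing condition on pairs $(P_i, P_j)$ in both directions, especially along boundaries between adjacent thickened strips that do not share a common special corner. This requires careful handling of the up-vertical / down-vertical step constraints introduced to accommodate $2\times 2$ blocks in thickened strips, and a diagonal-by-diagonal argument invoking the nested property of $\Phi$ (Definition~\ref{D:nesttodec}) together with the ordering framework already developed in the proof of Lemma~\ref{L:compatible}.
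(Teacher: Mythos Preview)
Your proposal is correct and follows essentially the same route as the paper: restrict $T$ to each $\Theta_i$, apply the bijection $f$ of Lemma~\ref{L:bij3}, and identify the non-crossing condition with the fact that entries along each diagonal of an SSYT are strictly increasing from top-left to bottom-right (the paper states this directly, whereas you plan to unpack it contrapositively). One small point worth making explicit: for the inverse direction you should invoke Lemma~\ref{L:compatible} at the outset to conclude that any non-crossing $g$-tuple necessarily has $\pi=\mathrm{id}$, so that $f^{-1}$ lands you in $\CMcal{T}_{\Theta_i}$ rather than some $\CMcal{T}_{\Theta_i\#\Theta_{\pi_i}}$.
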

\begin{proof}
In view of Lemma~\ref{L:compatible}, we shall establish a bijection between semistandard Young tableaux of skew shape $\lambda/\mu$ and non-crossing $g$-tuples
\begin{align}\label{E:noncr}
(P(u_1,v_1),P(u_2,v_2),\ldots,P(u_g,v_g))
\end{align}
of double lattice paths with $r$ touchpoints where $P(u_i,v_i)\in\CMcal{P}(u_i,v_i)$ for every $i$.

For a semistandard Young tableau $T$ of the skew shape $\lambda/\mu$, we can express $T$ as a $g$-tuple $(T_{\Theta_1},T_{\Theta_2},\ldots,T_{\Theta_g})$ of thickened strip tableaux where $T_{\Theta_i}$ is $T$ that is restricted to the thickened strip shape $\Theta_i$. Combining the bijection $f$ in Lemma~\ref{L:bij3}, one gets the $g$-tuple (\ref{E:noncr}) of double lattice paths where $P(u_i,v_i)=f(T_{\Theta_i})$ is a double lattice path from $u_i$ to $v_i$ and the fact that (\ref{E:noncr}) is non-crossing follows from the fact that all entries of boxes on the same diagonal of $T$ are strictly increasing from the top-left side to the bottom-right side. The map $(T_{\Theta_1},T_{\Theta_2},\ldots,T_{\Theta_g})\mapsto (\ref{E:noncr})$ is a bijection because, for any $i$ and $j$, two double lattice paths $P(u_{i},v_i)$ and $P(u_{j},v_{j})$ are non-intersecting if and only if two thickened strip tableaux $T_{\Theta_i}$ and $T_{\Theta_j}$ are disjoint. Furthermore, $P(u_{j},v_j)$ is on the top side of $P(u_{i},v_i)$ such that the diagonal step of $P(u_{j},v_j)$ and the horizontal step of $P(u_{i},v_i)$ end at the same point $(c+1,q)$ if and only if the box of content $c$ and with entry $q$ in $T$, is an upper corner of $\Theta_j$ and a lower corner of $\Theta_i$. Since there are $r$ common special corners of $\Phi$, there are $r$ touchpoints of (\ref{E:noncr}).
\end{proof}
\begin{example}
Consider the semistandard Young tableau $T=(T_{\Theta_1},T_{\Theta_2},T_{\Theta_3})$ of skew shape $(6,6,6,4)/(3,1)$ in Figure~\ref{F:pic2-9}, the corresponding triple of double lattice paths $P(u_i,v_i)=f(T_{\Theta_i})$ is displayed in Figure~\ref{F:pic2-6} where the $y$-coordinates of $u_1,v_1,u_2$ are all infinity.

In addition, from the non-crossing triple $(P(u_1,v_1),P(u_2,v_2),P(u_3,v_3))$ of double lattice paths in Figure~\ref{F:pic2-6}, one has $u_1\prec u_2 \prec u_3$ and $v_1\prec v_2\prec v_3$. So for instance, any double lattice paths $P(u_1,v_3)$ and $P(u_2,v_2)$ are crossing because $u_1\prec u_2$ but $v_2\prec v_3$.
\end{example}
\begin{figure}[htbp]
\begin{center}
\includegraphics[scale=0.8]{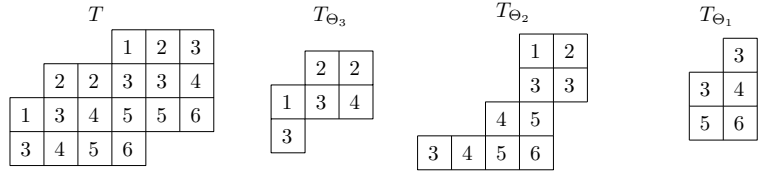}
\caption{A semistandard Young tableau $T$ which is equivalent to a triple $(T_{\Theta_1},T_{\Theta_2},T_{\Theta_3})$ of thickened strip tableaux and $\Phi=(\Theta_1,\Theta_2,\Theta_3)$ is given in Figure~\ref{F:pic2-01}.
\label{F:pic2-9}}
\end{center}
\end{figure}
\begin{figure}[htbp]
\begin{center}
\includegraphics[scale=1.0]{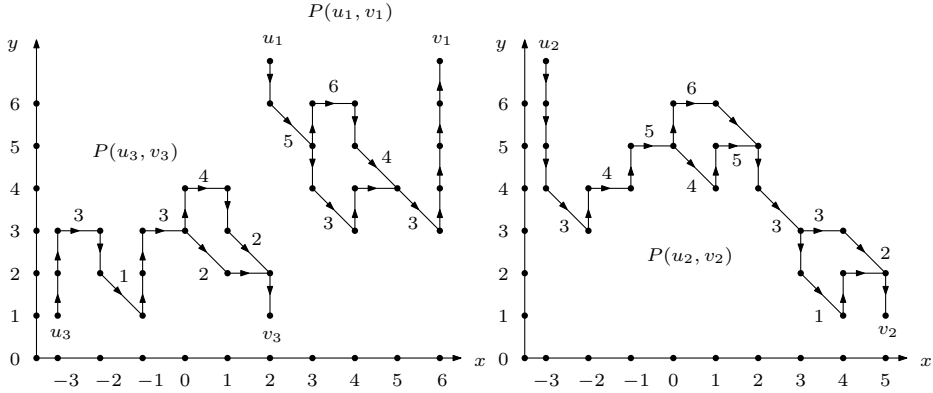}
\caption{Three double lattice paths where each $P(u_i,v_i)=f(T_{\Theta_i})$ uniquely corresponds to the semistandard Young tableau $T_{\Theta_i}$ in Figure~\ref{F:pic2-9}. \label{F:pic2-6}}
\end{center}
\end{figure}
\subsection{Count the separable sequences of double lattice paths}
For a $g$-tuple
\begin{align}\label{E:good3}
\tilde{P}=(\tilde{P}(u_{\pi_1},v_1),\tilde{P}(u_{\pi_2},v_2),\ldots,
\tilde{P}(u_{\pi_g},v_g))
\end{align}
of double lattice paths where $\tilde{P}(u_{\pi_i},v_i)\in\CMcal{P}(u_{\pi_i},v_i)$ for every $i$, we will describe a separable $g$-tuple of double lattice paths and our main task is to establish the bijection in Proposition~\ref{P:nonspr}, from which it follows that the generating function of all weighted separable $g$-tuples of double lattice paths is $p_{1^r}(X)s_{\lambda/\mu}(X)$; see Subsection~\ref{ss:invo}.
\begin{definition}[non-separable at a single point]\label{D:spe1}
For all $c$ such that neither $c$ nor $c-1$ is the content of some special corner of $\Phi$, we say that two double lattice paths $\tilde{P}(u_{\pi_i},v_i)$ and $\tilde{P}(u_{\pi_j},v_j)$ are {\em non-separable at the point $(c,y)$} if and only if $\tilde{P}(u_{\pi_i},v_i)$ intersects $\tilde{P}(u_{\pi_j},v_j)$ at the point $(c,y)$, that is, $\tilde{P}(u_{\pi_i},v_i)$ and $\tilde{P}(u_{\pi_j},v_j)$ have the point $(c,y)$ in common.

For a $g$-tuple $\tilde{P}$ (see (\ref{E:good3})) of double lattice paths, we say that $\tilde{P}$ is non-separable at a single point if there exist two double lattice paths in $\tilde{P}$ such that they are non-separable at a single point. Otherwise we say that $\tilde{P}$ is separable at all single points.
\end{definition}
\begin{remark}
The point $(c,y)$ in Definition~\ref{D:spe1} is not a touchpoint because if the point $(c,y)$ is a touchpoint, then $c-1$ must be the content of some special corner of $\Phi$, which is impossible according to the assumption in Definition~\ref{D:spe1}. When the outside nested decomposition $\Phi$ is an outside decomposition, there is no special corners of $\Phi$ and any double lattice path is a lattice path. So in this case any two double lattice paths are non-separable at the point $(c,y)$ if and only if two lattice paths are intersecting at the point $(c,y)$.
\end{remark}
\begin{definition}[$c$-point, $\CMcal{C}$-pair]\label{D:spe2}
For all $c$ such that $c$ is the content of some special corner of $\Phi$, and for all $i$, if $\tilde{P}(u_{\pi_i},v_i)$ has a point on line $x=c$, we consider the unique {\em $c$-point} of $\tilde{P}(u_{\pi_i},v_i)$, which is the ending point of the non-vertical step of $\tilde{P}(u_{\pi_i},v_i)$ between lines $x=c-1$ and $x=c$, or the starting point of $\tilde{P}(u_{\pi_i},v_i)$ on line $x=c$. If $i\ne j$, the $c$-point $(c,y_2)$ of $\tilde{P}(u_{\pi_j},v_j)$ is above the one $(c,y_1)$ of $\tilde{P}(u_{\pi_i},v_i)$, that is, $y_1<y_2$, and there is no other $c$-points between $(c,y_1)$ and $(c,y_2)$.
Then we call $([y_1,y_2],c)$ a {\em $\CMcal{C}$-pair}.
\end{definition}
\begin{remark}
By Definition~\ref{D:spe2} it is clear that the number of $\CMcal{C}$-pairs and the number of touchpoints of a non-crossing $g$-tuple of double lattice paths are the same, which are both equal to the number of common special corners of $\Phi$.
\end{remark}
\begin{example}
For a triple $(\tilde{P}(u_1,v_1),\tilde{P}(u_2,v_2),\tilde{P}(u_3,v_3))$ of double lattice paths in Figure~\ref{F:pic3-4}, and for $c=-3$, the $(-3)$-points of $\tilde{P}(u_2,v_2)$ and $\tilde{P}(u_3,v_3)$ are $(-3,\infty)$ and $(-3,1)$. So $([1,\infty],-3)$ is a $\CMcal{C}$-pair. Similarly, the $(0)$-points of $\tilde{P}(u_2,v_2)$ and $\tilde{P}(u_3,v_3)$ are $(0,5)$ and $(0,3)$, as well as the $(3)$-points of $\tilde{P}(u_1,v_1)$ and $\tilde{P}(u_2,v_2)$ are $(3,5)$ and $(3,3)$. Consequently the triple of double lattice paths $\tilde{P}(u_i,v_i)$ contains three $\CMcal{C}$-pairs, which are $([1,\infty],-3)$, $([3,5],0)$ and $([3,5],3)$.
\end{example}
\begin{figure}[htbp]
\begin{center}
\includegraphics[scale=1.0]{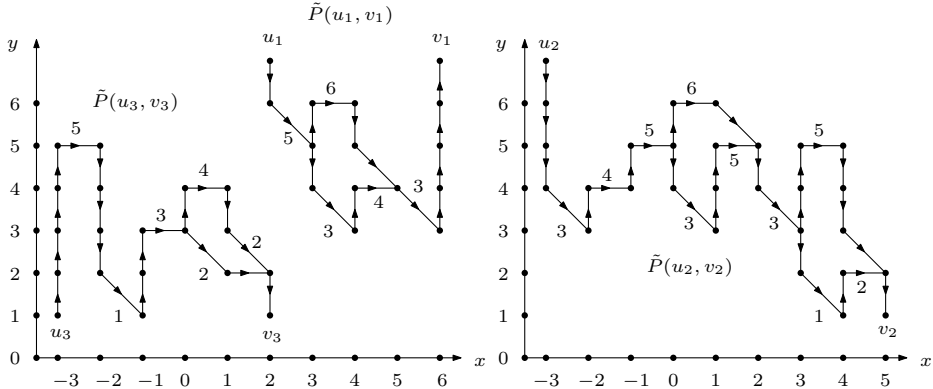}
\caption{A separable triple of double lattice paths $\tilde{P}(u_i,v_i)$.
\label{F:pic3-4}}
\end{center}
\end{figure}
We define that $\tilde{P}_c(u_j,v_i)[a,b]$ is constructed from $\tilde{P}(u_j,v_i)\in\CMcal{P}(u_j,v_i)$ by the following steps:
\begin{itemize}
\item remove the vertical steps on lines $x=c$ and $x=c+1$;
\item shift the non-vertical step ending at $(c+1,a)$ to the non-vertical step ending at $(c+1,b)$;
\item add vertical steps on lines $x=c$ and $x=c+1$ to connect with the new non-vertical step.
\end{itemize}
It should be noted that $\tilde{P}_c(u_j,v_i)[a,b]$ may not be a double lattice path.
\begin{definition}[non-separable at a $\CMcal{C}$-pair]\label{D:rep}
For all $\CMcal{C}$-pairs $([y_1,y_2],c)$ of a $g$-tuple $\tilde{P}$ (see (\ref{E:good3})) of double lattice paths, suppose that $(c,y_2),(c,y_1)$ are the $c$-points of $\tilde{P}(u_{\pi_j},v_j)$ and $\tilde{P}(u_{\pi_i},v_i)$, the diagonal step of $\tilde{P}(u_{\pi_j},v_j)$ ends at $(c+1,b)$ and the horizontal step of $\tilde{P}(u_{\pi_i},v_i)$ ends at $(c+1,a)$. Then we say that $\tilde{P}(u_{\pi_i},v_i)$ and $\tilde{P}(u_{\pi_j},v_j)$ are {\em non-separable at a $\CMcal{C}$-pair $([y_1,y_2],c)$} if and only if neither $\tilde{P}_c(u_{\pi_j},v_j)[b,a]$ nor $\tilde{P}_c(u_{\pi_i},v_i)[a,b]$ is a double lattice path.

For a $g$-tuple $\tilde{P}$ (see (\ref{E:good3})) of double lattice paths, we say that $\tilde{P}$ is non-separable at a $\CMcal{C}$-pair if there exist two double lattice paths in $\tilde{P}$ such that they are non-separable at a $\CMcal{C}$-pair. Otherwise we say that $\tilde{P}$ is separable at all $\CMcal{C}$-pairs.
\end{definition}
\begin{definition}[separable double lattice paths]\label{D:sdlp}
For a $g$-tuple $\tilde{P}$ (see (\ref{E:good3})) of double lattice paths, we say that $\tilde{P}$ is {\em separable} if and only if $\tilde{P}$ is neither non-separable at any single point nor non-separable at any $\CMcal{C}$-pair.
\end{definition}
\begin{example}
The triple $\tilde{P}=(\tilde{P}(u_1,v_1),\tilde{P}(u_2,v_2),\tilde{P}(u_3,v_3))$ of double lattice paths in Figure~\ref{F:pic3-4} is separable. For all $c$ such that $c\in \{-1,2,5,6\}$, any two double lattice paths from $\tilde{P}$ are not intersecting on line $x=c$. For the $\CMcal{C}$-pair $([1,\infty],-3)$, we find that between lines $x=-3$ and $x=-2$, the diagonal step of $\tilde{P}(u_2,v_2)$ ends at $(-2,3)$, the horizontal step of $\tilde{P}(u_3,v_3)$ ends at $(-2,5)$, and $\tilde{P}_{-3}(u_3,v_3)[5,3]$ is a double lattice path. Similarly, $\tilde{P}_{0}(u_2,v_2)[3,4]$ and $\tilde{P}_{3}(u_2,v_2)[5,3]$ are double lattice paths.
\end{example}
\begin{proposition}\label{P:nonspr}
Under the assumption of Theorem~\ref{T:thick}, given any fixed total order of all points in the $2$-dimensional $\mathbb{N}\times\mathbb{N}$ grid, there is a bijection between all separable $g$-tuples of double lattice paths with $r$ distinct $\CMcal{C}$-pairs and all pairs $(\{a_i\}_{i=1}^r,P)$ where $\{a_i\}_{i=1}^r$ is a sequence of $r$ positive integers and $P$ is a non-crossing $g$-tuple of double lattice paths with $r$ distinct touchpoints.
\end{proposition}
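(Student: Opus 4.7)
The plan is to construct an explicit bijection $\psi$ that processes each $\CMcal{C}$-pair by a local ``shift'' converting it into a touchpoint while extracting a positive integer that records the change, and to verify global compatibility via the nested property of $\Phi$. The fixed total order on $\mathbb{N}\times\mathbb{N}$ serves to make canonical choices when both possible local moves are available.

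Concretely, given a separable $g$-tuple $\tilde P$ with $r$ distinct $\CMcal{C}$-pairs, I would first enumerate these $\CMcal{C}$-pairs in the canonical order induced by applying the fixed total order to a distinguished point of each pair (for instance, the smaller of $(c+1,a)$ and $(c+1,b)$). At each $\CMcal{C}$-pair $([y_1,y_2],c)$, the horizontal step of $\tilde P(u_{\pi_i},v_i)$ ends at $(c+1,a)$ and the diagonal step of $\tilde P(u_{\pi_j},v_j)$ ends at $(c+1,b)$; the separability hypothesis at this $\CMcal{C}$-pair (Definition~\ref{D:rep}) guarantees that at least one of $\tilde P_c(u_{\pi_i},v_i)[a,b]$ or $\tilde P_c(u_{\pi_j},v_j)[b,a]$ is a valid double lattice path. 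I would use the fixed total order to select canonically between the two shifts whenever both are valid, perform the chosen shift so that the two steps now end at a common point $(c+1,q)$ (with $q\in\{a,b\}$) that becomes a touchpoint in the image, and record the $y$-coordinate of the endpoint that moved as the positive integer $a_i$.

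For the inverse map, given a pair $(\{a_i\}_{i=1}^r,P)$ with $P$ non-crossing, I would process the $r$ touchpoints of $P$ in the same canonical order. At each touchpoint $(c+1,q)$, the positive integer $a_i$ together with the fixed total order determines uniquely which of the two steps (horizontal or diagonal) to shift outward to height $a_i$, thereby expanding the touchpoint into a $\CMcal{C}$-pair with endpoints $(c+1,q)$ and $(c+1,a_i)$. The convention is tuned so that the shift matches exactly the surgery performed by $\psi$.

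The main obstacle is verifying global consistency of the local surgeries. I must check that each shift in $\psi$ preserves the structural features—$c$-points on other lines, starting and ending points, separability at every other $\CMcal{C}$-pair—needed for subsequent iterations, and, crucially, that the final $g$-tuple is \emph{non-crossing} rather than merely separable at $\CMcal{C}$-pairs; dually, I must check that each expansion in $\psi^{-1}$ does not create a non-separability either at a single point or at another $\CMcal{C}$-pair. The key structural tool is the nested property (Definition~\ref{D:nesttodec}): it forces the $\CMcal{C}$-pairs to be aligned along distinguished content diagonals so that a shift at one $\CMcal{C}$-pair is localized to that diagonal and does not disturb another. An induction on the canonical order then closes the argument. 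Finally, the bijection is weight-preserving, since at each $\CMcal{C}$-pair the contribution $x_a x_b$ to $w(\tilde P)$ splits as $x_{T(\alpha)}\cdot x_{a_i}$ where $\alpha$ is the common special corner; summing over all separable $g$-tuples therefore yields $s_{\lambda/\mu}(X)\,p_{1^r}(X)$, as required for the next step of the proof of Theorem~\ref{T:thick}.
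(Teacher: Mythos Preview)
Your overall architecture matches the paper's: local surgeries at each $\CMcal{C}$-pair that shift one non-vertical step so that the two steps end at a common point (a touchpoint), while recording the displaced height as $a_i$. But you have misidentified the role of the fixed total order, and this creates a real gap in your bijection.

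In the paper the total order is used \emph{only} to index the touchpoints of $P$ (so that $a_i$ is paired with the $i$-th touchpoint). It is \emph{not} used to decide, when both local moves are valid, which one to perform. The paper's tie-breaking is an asymmetric structural rule: at a touchpoint $(b_i+1,d_i)$ where the diagonal step of the upper path $P(u_{s_i},v_{s_i})$ and the horizontal step of the lower path $P(u_{t_i},v_{t_i})$ meet, one \emph{always first} attempts to shift the diagonal step to height $a_i$; only if $P_{b_i}(u_{s_i},v_{s_i})[d_i,a_i]$ fails to be a double lattice path does one instead shift the horizontal step. The same ``diagonal-first'' preference is applied in the inverse direction on $\tilde P$. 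This is what makes the two maps mutual inverses: if in the forward direction the diagonal shift was invalid and the horizontal step was moved, then in $\tilde P$ the upper path is unchanged, so the diagonal shift is still invalid and the inverse is forced to undo the horizontal move; if the diagonal shift was valid, undoing it is trivially valid.

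Your rule ``use the fixed total order to select canonically between the two shifts whenever both are valid'' lacks this invariance. Suppose in $\tilde P$ the horizontal step ends at $a$, the diagonal at $b$, and only the shift of the horizontal step to $b$ is valid; you are forced to take it, landing at touchpoint $b$ and recording $a_i=a$. Your inverse then sees touchpoint $b$ and integer $a$ and must decide whether to move the diagonal or the horizontal step out to height $a$. Your stated criterion---comparing $(c+1,q)$ and $(c+1,a_i)$ in the total order---carries no information about which shift was forced in the forward direction, so it can easily choose the wrong one and produce a different $\tilde P'$. The sentence ``the convention is tuned so that the shift matches exactly the surgery performed by $\psi$'' is precisely the point that needs an argument, and the total order alone cannot supply it. A second, smaller issue: you order the $\CMcal{C}$-pairs by points such as ``the smaller of $(c+1,a)$ and $(c+1,b)$'', but those coordinates belong to $\tilde P$ and change under the surgery; the paper instead indexes by the touchpoints of $P$, whose positions are stable.
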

\begin{proof}
From Lemma~\ref{L:compatible} we know that if $P$ is a non-crossing $g$-tuple (\ref{E:good}) of double lattice paths, then one gets
\begin{align}\label{E:good2}
P=(P(u_1,v_1),P(u_2,v_2),\ldots,P(u_g,v_g))
\end{align}
where $P(u_i,v_i)\in \CMcal{P}(u_i,v_i)$ for every $i$. First we establish that all pairs $(\{a_i\}_{i=1}^r,P)$ where $\{a_i\}_{i=1}^r$ is a sequence of $r$ positive integers and $P$ is a non-crossing $g$-tuple (\ref{E:good2}) of double lattice paths with $r$ distinct touchpoints, are in bijection with all separable $g$-tuples
\begin{align}\label{E:good4}
\tilde{P}=(\tilde{P}(u_1,v_1),\tilde{P}(u_2,v_2),\ldots,\tilde{P}(u_g,v_g))
\end{align}
of double lattice paths with $r$ distinct $\CMcal{C}$-pairs where $\tilde{P}(u_i,v_i)\in \CMcal{P}(u_i,v_i)$ for every $i$. That is, to prove the map
$(\{a_i\}_{i=1}^r,P)\mapsto \tilde{P}$
is a bijection. Second, we prove that for any separable $g$-tuple (\ref{E:good3}) of double lattice paths, $\pi$ must be the identity permutation.

Given such a pair $(\{a_i\}_{i=1}^r,P)$, by assumption all double lattice paths $P(u_i,v_i)$ have $r$ distinct touchpoints, suppose that $(b_i+1,d_i)$ is the coordinate of the $i$-th touchpoint with respect to any total order of all points in the $2$-dimensional $\mathbb{N}\times\mathbb{N}$ grid. Then for all $i$ such that $1\le i\le r$, assume that the diagonal step of $P(u_{s_i},v_{s_i})$ and the horizontal step of $P(u_{t_i},v_{t_i})$ intersect at the point $(b_i+1,d_i)$, we shall insert $a_i$ to the double lattice paths according to the following steps:
\begin{enumerate}
\item if $P_{b_i}(u_{s_i},v_{s_i})[d_i,a_i]$ is a double lattice path, then we replace the non-vertical steps between lines $x=b_i$ and $x=b_i+1$, together with the vertical steps on lines $x=b_i$ and $x=b_i+1$ of $P(u_{s_i},v_{s_i})$ by the ones from $P_{b_i}(u_{s_i},v_{s_i})[d_i,a_i]$;
\item otherwise, $a_i>d_i$ and we replace the non-vertical steps between lines $x=b_i$ and $x=b_i+1$, together with the vertical steps on lines $x=b_i$ and $x=b_i+1$ of $P(u_{t_i},v_{t_i})$ by the ones from $P_{b_i}(u_{t_i},v_{t_i})[d_i,a_i]$.
\end{enumerate}
We choose $\tilde{P}(u_i,v_i)$ to be the double lattice path $P(u_i,v_i)$ after inserting all integers $a_1,a_2,\ldots,a_r$ to the $g$-tuple $P$ (see (\ref{E:good2})) of double lattice paths. So it suffices to prove the $g$-tuple $\tilde{P}$ (see (\ref{E:good4})) of double lattice paths is separable.

We observe that for all $c$ such that neither $c$ nor $c-1$ is the content of some special corner of $\Phi$, all non-vertical steps of $\tilde{P}$ between lines $x=c$ and $x=c+1$ are the same as the ones of $P$, so any two double lattice paths from $\tilde{P}$ are separable at any single point since $P$ is non-intersecting between lines $x=c$ and $x=c+1$. In addition, we notice that all $\CMcal{C}$-pairs of $\tilde{P}$ and all $\CMcal{C}$-pairs of $P$ are the same. So we claim that for all $i$, $\tilde{P}(u_{s_i},v_{s_i})$ and $\tilde{P}(u_{t_i},v_{t_i})$ are separable at any $\CMcal{C}$-pair $([y_1,y_2],b_i)$. If not, by Definition~\ref{D:rep} it would contradict the facts that the point $(b_i+1,d_i)$ is a touchpoint of $P(u_{s_i},v_{s_i})$ and $P(u_{t_i},v_{t_i})$ and $P$ is non-crossing.

Conversely, for a separable $g$-tuple $\tilde{P}$ of double lattice paths and for all $\CMcal{C}$-pairs $([y_{1},y_{2}],b_i)$ of $\tilde{P}$, if the horizontal step of $\tilde{P}(u_{t_i},v_{t_i})$ ends at point $(b_i+1,b)$ and the diagonal step of $\tilde{P}(u_{s_i},v_{s_i})$ ends at point $(b_i+1,a)$. Since $\tilde{P}$ is separable, according to Definition~\ref{D:sdlp}, one of $\tilde{P}_{b_i}(u_{t_i},v_{t_i})[b,a]$ and $\tilde{P}_{b_i}(u_{s_i},v_{s_i})[a,b]$ must be a double lattice path.
\begin{enumerate}
\setcounter{enumi}{2}
\item If $\tilde{P}_{b_i}(u_{s_i},v_{s_i})[a,b]$ is a double lattice path, then we replace all non-vertical steps between lines $x=b_i$ and $x=b_i+1$, together with the vertical steps on lines $x=b_i$ and $x=b_i+1$ of $\tilde{P}(u_{s_i},v_{s_i})$ by the ones of $\tilde{P}_{b_i}(u_{s_i},v_{s_i})[a,b]$, and we set $a_i=a$.
\item Otherwise, $\tilde{P}_{b_i}(u_{t_i},v_{t_i})[b,a]$ is a double lattice path, then we replace all non-vertical steps between lines $x=b_i$ and $x=b_i+1$, together with the vertical steps on lines $x=b_i$ and $x=b_i+1$ of $\tilde{P}(u_{t_i},v_{t_i})$ by the ones of $\tilde{P}_{b_i}(u_{t_i},v_{t_i})[b,a]$, and we set $a_i=b$.
\end{enumerate}
In this way we retrieve the non-crossing $g$-tuple $P$ of double lattice paths as well as a sequence $\{a_i\}_{i=1}^r$ of positive integers, so that the $i$-th touchpoint $(b_i+1,d_i)$ of $P$ corresponds to the $\CMcal{C}$-pair $([y_1,y_2],b_i)$ of $\tilde{P}$ for every $i$. In fact, $(3)$-$(4)$ is the inverse process of $(1)$-$(2)$.

We notice that for any $\pi\in S_g$, any separable $g$-tuple (\ref{E:good3})
of double lattice paths, after the above process, yields a non-crossing $g$-tuple of double lattice paths. Because of Lemma~\ref{L:compatible}, $\pi$ in (\ref{E:good3}) must be the identity permutation and the proof is complete.
\end{proof}
\begin{example}
Consider the outside nested decomposition $\Phi=(\Theta_1,\Theta_2,\Theta_3)$ in Figure~\ref{F:pic2-01} where $\Phi$ has three common special corners $(4,1),(3,3),(2,5)$. Given a pair $(\{a_i\}_{i=1}^3,P)$ where $(a_1,a_2,a_3)=(5,3,5)$ and $P$ is a non-crossing triple of double lattice paths given in Figure~\ref{F:pic2-6}. The corresponding separable triple $\tilde{P}$ of double lattice paths is shown in Figure~\ref{F:pic3-4}.

For instance, when we insert $a_1=5$ to the triple $P$ of double lattice paths in Figure~\ref{F:pic2-6}. Since the $(-3)$-point of $P(u_2,v_2)$ is above the one of $P(u_3,v_3)$, and $P_{-3}(u_2,v_2)[3,5]$ is not a double lattice path because a down-vertical step on line $x=-2$ precedes a horizontal step; see condition $(3)$ of Definition~\ref{D:plattice} and Figure~\ref{F:f2}. So between and on lines $x=-3$ and $x=-2$, $\tilde{P}(u_3,v_3)$ has the same steps as in $P_{-3}(u_3,v_3)[3,5]$.
\end{example}
\begin{figure}[htbp]
\begin{center}
\includegraphics[scale=1.0]{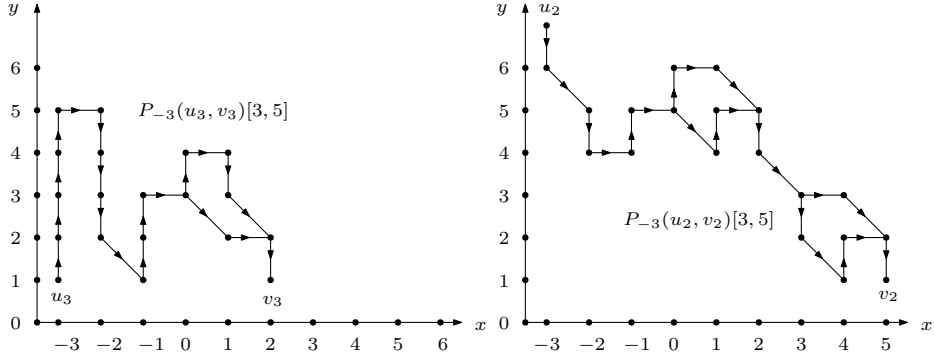}
\caption{$P_{-3}(u_2,v_2)[3,5]$ is not a double lattice path, while $P_{-3}(u_3,v_3)[3,5]$ is a double lattice path.
\label{F:f2}}
\end{center}
\end{figure}
\subsection{Construct the involution}\label{ss:invo}
For any permutation $\pi=\pi_1\pi_2\cdots\pi_g\in S_g$, the {\em inversion} of $\pi$ is
$\mbox{inv}(\pi)=\vert\{(i,j):\pi_i>\pi_j,\,i<j\}\vert$ and we may interpret the determinant in Theorem~\ref{T:thick} as
\begin{align}\label{E:gen}
\det[s_{\Theta_i\#\Theta_j}(X)]_{i=1}^g=\sum_{\pi\in S_g}(-1)^{\scriptsize{\mbox{inv}}(\pi)}\prod_{i=1}^g
s_{\Theta_i\#\Theta_{\pi_i}}(X).
\end{align}
By Lemma~\ref{L:bij3} we know that all semistandard Young tableaux from $\CMcal{T}_{\Theta_i\#\Theta_{j}}$ are in bijection with all double lattice paths in $\CMcal{P}(u_{j},v_i)$. It follows that all $g$-tuples
\begin{align*}
(T_{\Theta_1\#\Theta_{\pi_1}},T_{\Theta_2\#\Theta_{\pi_2}},\ldots, T_{\Theta_g\#\Theta_{\pi_g}})
\end{align*}
where $T_{\Theta_i\#\Theta_{\pi_i}}$ is a semistandard Young tableau of thickened shape $\Theta_i\#\Theta_{\pi_i}$, are in bijection with all $g$-tuples (\ref{E:good3}) of double lattice paths where $\tilde{P}(u_{\pi_i},v_i)=f(T_{\Theta_i\#\Theta_{\pi_i}})$.

For every double lattice path $\tilde{P}(u_{\pi_i},v_i)$ in (\ref{E:good3}), if two non-vertical steps end at the same point $(a,b)$, we assign these two steps with a single weight $x_b$. For every other horizontal step or diagonal step, we assign each step with a weight $x_b$ if the step ends at $(a,b)$. For every vertical step, we assign it with weight $1$. Furthermore, the weight of every double lattice path $\tilde{P}(u_{\pi_i},v_i)$, denoted by $\CMcal{W}(\tilde{P}(u_{\pi_i},v_i))$, is the product of all weights on the steps of $\tilde{P}(u_{\pi_i},v_i)$ and we use $q[u_{\pi_i},v_i](X)$ to denote the generating function of all double lattice paths in $\CMcal{P}(u_{\pi_i},v_i)$, that is, $q[u_{\pi_i},v_i](X)$ is the sum of all weighted double lattice paths from $\CMcal{P}(u_{\pi_i},v_i)$. The relation between these notations is
\begin{align}\label{E:re}
\sum_{\tilde{P}}\prod_{i=1}^g \CMcal{W}(\tilde{P}(u_{\pi_i},v_i))
=\prod_{i=1}^g \sum_{\CMcal{P}}\CMcal{W}(\tilde{P}(u_{\pi_i},v_i))
=\prod_{i=1}^g q[u_{\pi_i},v_i](X)
\end{align}
where the first sum runs over all $g$-tuples (\ref{E:good3}) of double lattice paths and the second sum runs over all double lattice paths $\tilde{P}(u_{\pi_i},v_i)$ from the set $\CMcal{P}(u_{\pi_i},v_i)$. We recall that Lemma~\ref{L:bij3} implies $q[u_{\pi_i},v_i](X)=s_{\Theta_{i}\#\Theta_{\pi_i}}(X)$. If $\Theta_i\#\Theta_{\pi_i}=\varnothing$, then $s_{\varnothing}(X)=q[u_{\pi_i},v_i](X)=1$ since the only double lattice path from $u_{\pi_i}$ to $v_i$ has no non-vertical steps.
If $\Theta_i\#\Theta_{\pi_i}$ is undefined, then $s_{\Theta_i\#\Theta_{\pi_i}}(X)=q[u_{\pi_i},v_i](X)=0$ since the set $\CMcal{P}(u_{\pi_i},v_i)$ is undefined. Together with (\ref{E:gen}) and (\ref{E:re}), one obtains
\begin{align}\label{E:gen1}
\det[s_{\Theta_i\#\Theta_j}(X)]_{i=1}^g&=
\sum_{\pi\in S_g}\sum_{\tilde{P}}(-1)^{\scriptsize{\mbox{inv}}(\pi)}\prod_{i=1}^g \CMcal{W}(\tilde{P}(u_{\pi_i},v_i))
\end{align}
which can be viewed as a generating function for all pairs $(\pi,\tilde{P})$ where $\pi\in S_g$ and $\tilde{P}$ is any $g$-tuple (\ref{E:good3}) of double lattice paths. From Proposition~\ref{L:bij5} and Proposition~\ref{P:nonspr}, it follows that the generating function for all pairs $(\mbox{id},\tilde{P})$ when $\tilde{P}$ is separable, equals the generating function for all pairs $(\{a_i\}_{i=1}^r,P)$ where $P$ is a non-crossing $g$-tuple (\ref{E:good2}) of double lattice paths, that is,
\begin{align}\label{E:nonspe}
(\sum_{i=1}^{\infty}x_i)^r s_{\lambda/\mu}(X)=p_{1^r}(X)s_{\lambda/\mu}(X).
\end{align}
So in order to prove Theorem~\ref{T:thick}, it remains to find an involution on all pairs $(\pi,\tilde{P})$ when $\tilde{P}$ is non-separable. From Definition~\ref{D:sdlp} it is clear that a $g$-tuple $\tilde{P}$ (see (\ref{E:good3})) of double lattice paths is non-separable if and only if $\tilde{P}$ is non-separable at a point or at a $\CMcal{C}$-pair. It should be noted that there is {\em no} common integer $c$ such that $\tilde{P}$ is non-separable at point $(c,y)$ and at a $\CMcal{C}$-pair $([y_1,y_2],c)$ according to Definition~\ref{D:spe1} and \ref{D:rep}. So we consider the minimal integer $c^*$ such that
\begin{itemize}
\item $\tilde{P}$ is non-separable at a point on line $x=c^*$ or is non-separable at a $\CMcal{C}$-pair $([y_1,y_2],c^*)$ for some $y_1,y_2$;
\item $\tilde{P}$ is neither non-separable at any point $(c,y)$ nor at any $\CMcal{C}$-pair $([y_1,y_2],\tilde{c})$ when $c,\tilde{c}<c^*$.
\end{itemize}
We choose a {\em minimum} of any non-separable $g$-tuple $\tilde{P}$ (see (\ref{E:good3})) of double lattice paths to be
\begin{enumerate}
\item the point $(c^*,y)$ if it is the first point on line $x=c^*$ from top to bottom such that $\tilde{P}$ is non-separable at the point $(c^*,y)$;
\item the $\CMcal{C}$-pair $([y_1,y_2],c^*)$ if $(c^*,y_2),(c^*,y_1)$ are the first two $c^*$-points on line $x=c^*$ from top to bottom such that $\tilde{P}$ is non-separable at the $\CMcal{C}$-pair $([y_1,y_2],c^*)$.
\end{enumerate}
We are now ready to construct the involution $\omega$ on all non-separable $g$-tuples $\tilde{P}$ (see (\ref{E:good3})) of double lattice paths by distinguishing the cases when the minimum of $\tilde{P}$ is a single point $(c^*,y)$ or a $\CMcal{C}$-pair $([y_1,y_2],c^*)$. For each case, we will express the involution $\omega$ as
\begin{align*}
(\pi,\tilde{P})\mapsto (\sigma,P^*)=\omega((\pi,\tilde{P}))
\end{align*}
where $\pi,\sigma\in S_g$ and $\tilde{P},P^*$ are two non-separable $g$-tuples of double lattice paths with
\begin{align}\label{E:Qspc}
P^*=(P^*(u_{\sigma_1},v_1),P^*(u_{\sigma_2},v_2),\ldots,P^*(u_{\sigma_g},v_g)).
\end{align}
For each case below, the involution $\omega$ has the following properties:
\begin{enumerate}
\item $\omega$ is weight-preserving, that is,
\begin{align*}
\prod_{q=1}^g\CMcal{W}(P^*(u_{\sigma_q},v_q))
    =\prod_{q=1}^g\CMcal{W}(\tilde{P}(u_{\pi_q},v_q));
\end{align*}
\item $\omega$ is sign-reversing, that is, \mbox{inv}$(\pi)=\mbox{inv}(\sigma)\pm 1$;
\item $\omega$ is closed, that is, $\tilde{P}$ and $P^*$ belong to the same case.
\end{enumerate}
{\em Case $1$}: if the minimum of $\tilde{P}$ is the point $(c^*,y)$, and among all double lattice paths that are passing the point $(c^*,y)$, assume that $\tilde{P}(u_{\pi_i},v_i)$ and $\tilde{P}(u_{\pi_j},v_j)$ of $\tilde{P}$ are two double lattice paths whose indices $i$ and $j$ are the smallest and the second smallest. Since neither $c^*$ nor $c^*-1$ is the content of some common special corner of $\Phi$, all steps of $\tilde{P}$ between lines $x=c^*$ and $x=c^*+1$ are all horizontal steps or all diagonal steps. By our choice of $c^*$, all steps of $\tilde{P}(u_{\pi_i},v_i)$ between lines $x=c^*-1$ and $x=c^*$ are disjoint with the ones of $\tilde{P}(u_{\pi_j},v_j)$.

Using the notations $\tilde{P}(u_{\pi_i},v)$ and $\tilde{P}(v,v_i)$ to denote the segments of the double lattice path $\tilde{P}(u_{\pi_i},v_i)$ from $u_{\pi_i}$ to the point $v=(c^*,y)$ and from the point $v=(c^*,y)$ to $v_i$ (similarly for $\tilde{P}(u_{\pi_j},v_j)$), we may define the pair $(\sigma,P^*)=\omega((\pi,\tilde{P}))$ where $\sigma=\pi\circ(i \,j)$ as follows. 
For $q\ne i$, $q\ne j$, we set $P^*(u_{\sigma_q},v_q)=\tilde{P}(u_{\pi_q},v_q)$ and
\begin{align*}
P^*(u_{\sigma_i},v_i)=\tilde{P}(u_{\pi_j},v)\tilde{P}(v,v_i),\,\,
P^*(u_{\sigma_j},v_j)=\tilde{P}(u_{\pi_i},v)\tilde{P}(v,v_j).
\end{align*}
We will show that $P^*(u_{\sigma_i},v_i)$ is a double lattice path from $u_{\sigma_i}=u_{\pi_j}$ to $v_i$ and $P^*(u_{\sigma_j},v_j)$ is a double lattice path from $u_{\pi_i}=u_{\sigma_j}$ to $v_j$ by discussing the ending points of the non-vertical steps between lines $x=c^*-1$ and $x=c^*+1$.

Here, without loss of generality, we assume that the steps between lines $x=c^*-1$ and $x=c^*$ are horizontal, while the steps between lines $x=c^*$ and $x=c^*+1$ are diagonal. Suppose that the ending points of non-vertical steps from $\tilde{P}(u_{\pi_i},v_i)$ are the points $(c^*,a_1)$ and $(c^*+1,a_2)$ where $a_1>a_2$, and the ones from $\tilde{P}(u_{\pi_j},v_j)$ are the points $(c^*,y)$ and $(c^*+1,b_2)$ where $y>b_2$; see Figure~\ref{F:c1}. Since $\tilde{P}(u_{\pi_i},v_i)$ and $\tilde{P}(u_{\pi_j},v_j)$ are intersecting at the point $(c^*,y)$, one has $a_2<y<a_1$, which implies $b_2<y<a_1$. So there is no single up-vertical step on line $x=c^*$ that is preceding the diagonal step in $P^*(u_{\sigma_i},v_i)$ or $P^*(u_{\sigma_j},v_j)$. This indicates that $P^*(u_{\sigma_i},v_i)$ and $P^*(u_{\sigma_j},v_j)$ are double lattice paths according to (4) in Definition~\ref{D:plattice}.

Furthermore, $\omega$ is closed within all non-separable $g$-tuples of double lattice paths that belong to case $1$, because by construction $\tilde{P}$ and $P^*$ are non-separable at the same points and the same $\CMcal{C}$-pairs. In particular, the minimum of $P^*$ is also the point $(c^*,y)$. See Figure~\ref{F:c1}.
\begin{figure}[htbp]
\begin{center}
\includegraphics[scale=1.0]{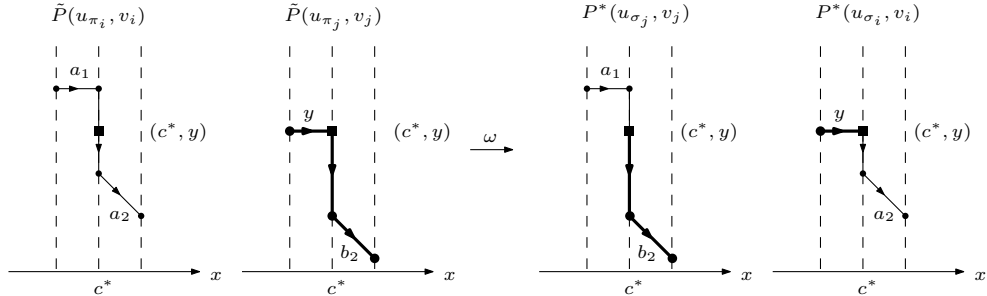}
\caption{The involution $\omega$ for case $1$ when the minimum of $\tilde{P}$ is a point $(c^*,y)$ (marked by a black square) and all integers represent the $y$-th coordinates of all ending points from the non-vertical steps.
\label{F:c1}}
\end{center}
\end{figure}

{\em Case $2$}: if the minimum of $\tilde{P}$ is the $\CMcal{C}$-pair $([y_1,y_2],c^*)$, we assume that $i,j$ are the smallest indices of $s,t$ satisfying $\tilde{P}(u_{\pi_s},v_s),\tilde{P}(u_{\pi_t},v_t)$ are non-separable at the $\CMcal{C}$-pair $([y_1,y_2],c^*)$. By our choice of $c^*$, all steps of $\tilde{P}(u_{\pi_i},v_i)$ between lines $x=c^*-1$ and $x=c^*$ are disjoint with the ones of $\tilde{P}(u_{\pi_j},v_j)$. Consequently, suppose that between lines $x=c^*$ and $x=c^*+2$, the diagonal step and the horizontal step of $\tilde{P}(u_{\pi_j},v_j)$ end at
\begin{align*}
(c^*+1,b)\,\mbox{ and }\, (c^*+2,y_4),
\end{align*}
and the horizontal step and the diagonal step of $\tilde{P}(u_{\pi_i},v_i)$ end at
\begin{align*}
(c^*+1,a)\,\mbox{ and }\, (c^*+2,y_3).
\end{align*}
Furthermore, if there is a horizontal step and a diagonal step of $\tilde{P}(u_{\pi_j},v_j)$ ending on line $x=c^*+1$ and on line $x=c^*+2$, we assume they end at
\begin{align*}
(c^*+1,d_2)\,\mbox{ and }\, (c^*+2,y_4).
\end{align*}
It should be mentioned that such horizontal step and diagonal step are not contained in $\tilde{P}(u_{\pi_j},v_j)$ if the starting box or the ending box of $\Theta_j$ is the common special corner of $\Theta_i$ and $\Theta_j$. But for this situation the discussion on the involution $\omega$ follows analogously, so we focus on the case when the horizontal step ending at $(c^*+1,d_2)$ and the diagonal step $(c^*+2,y_4)$ are contained in $\tilde{P}(u_{\pi_j},v_j)$. Likewise, if there is a diagonal step and a horizontal step of $P(u_{\pi_i},v_i)$ ending on line $x=c^*+1$ and on line $x=c^*+2$, we assume that they end at
\begin{align*}
(c^*+1,d_1)\,\mbox{ and }\, (c^*+2,y_3).
\end{align*}
See Figure~\ref{F:c0} where all integers represent the $y$-th coordinates of all ending points from the non-vertical steps.
\begin{figure}[htbp]
\begin{center}
\includegraphics[scale=1.0]{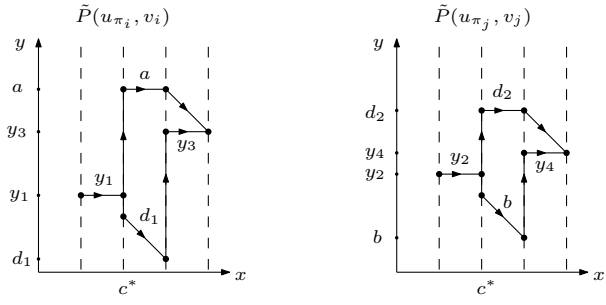}
\caption{The steps of $\tilde{P}(u_{\pi_i},v_i)$ and $\tilde{P}(u_{\pi_j},v_j)$ between lines $x=c^*-1$ and $x=c^*+2$ if the minimum of $\tilde{P}$ is the $\CMcal{C}$-pair $([y_1,y_2],c^*)$.
\label{F:c0}}
\end{center}
\end{figure}

Since $\tilde{P}(u_{\pi_i},v_i)$ and $\tilde{P}(u_{\pi_j},v_j)$ are double lattice paths, from Definition~\ref{D:plattice} we find that $d_1<y_1\le a$, $d_1\le y_3<a$, $b<y_2\le d_2$ and $b\le y_4<d_2$. Since neither $\tilde{P}_{c^*}(u_{\pi_i},v_i)[a,b]$ nor $\tilde{P}_{c^*}(u_{\pi_j},v_j)[b,a]$ is a double lattice path, then the integer $b$ satisfies $b<y_1$ or $b\le y_3$ and the integer $a$ satisfies $a\ge y_2$ or $a>y_4$. So under the assumption $y_1<y_2$, we shall consider the following disjoint sub-cases:
\begin{itemize}
\item[] case $2.1$: $b<y_1<y_2\le a$;
\item[] case $2.2$: $b<y_1$ and $y_4<a<y_2$;
\item[] case $2.3$: $y_1\le b\le y_3$ and $a\ge y_2$;
\item[] case $2.4$: $y_1\le b\le y_3$ and $y_4<a<y_2$.
\end{itemize}

{\em Case $2.1$}: if $b<y_1<y_2\le a$, we use the notations $\tilde{P}(u_{\pi_i},x\vert_{c^*})$ and $\tilde{P}(x\vert_{c^*},v_i)$ to denote the segments of the double lattice path $\tilde{P}(u_{\pi_i},v_i)$ from $u_{\pi_i}$ to all non-vertical steps ending on line $x=c^*$ and from all non-vertical steps of $\tilde{P}(u_{\pi_i},v_i)$ starting on line $x=c^*$ to $v_i$ (similarly for $\tilde{P}(u_{\pi_j},v_j)$). Furthermore, $\tilde{P}(u_{\pi_i},x\vert_{c^*})\tilde{P}(x\vert_{c^*},v_j)$ is obtained by connecting two segments $\tilde{P}(u_{\pi_i},x\vert_{c^*})$ and $\tilde{P}(x\vert_{c^*},v_j)$ with new vertical steps on line $x=c^*$. Here we may define the pair $(\sigma,P^*)=\omega((\pi,\tilde{P}))$ as follows. For $q\ne i$, $q\ne j$, we set $P^*(u_{\sigma_q},v_q)=\tilde{P}(u_{\pi_q},v_q)$ and
\begin{align*}
P^*(u_{\sigma_j},v_j)=\tilde{P}(u_{\pi_i},x\vert_{c^*})\tilde{P}(x\vert_{c^*},v_j),\,
P^*(u_{\sigma_i},v_i)=\tilde{P}(u_{\pi_j},x\vert_{c^*})\tilde{P}(x\vert_{c^*},v_i)
\end{align*}
where $P^*(u_{\sigma_j},v_j)$ is a double lattice path from $u_{\sigma_j}=u_{\pi_i}$ to $v_j$ and $P^*(u_{\sigma_i},v_i)$ is a double lattice path from $u_{\pi_j}=u_{\sigma_i}$ to $v_i$. This is guaranteed by the relations $d_1<y_2\le a$ and $d_2>y_1>b$. Furthermore, $\omega$ is closed within all non-separable $g$-tuples of double lattice paths that belong to case $2.1$, because $P^*$ also belongs to case $2.1$ since $d_1<y_1<y_2\le d_2$, and by construction $\tilde{P},P^*$ are non-separable at the same points and at the same $\CMcal{C}$-pairs. In particular, the minimum of $P^*$ is also the $\CMcal{C}$-pair $([y_1,y_2],c^*)$. See Figure~\ref{F:c2} for an example of case $2.1$.
\begin{figure}[htbp]
\begin{center}
\includegraphics[scale=1.0]{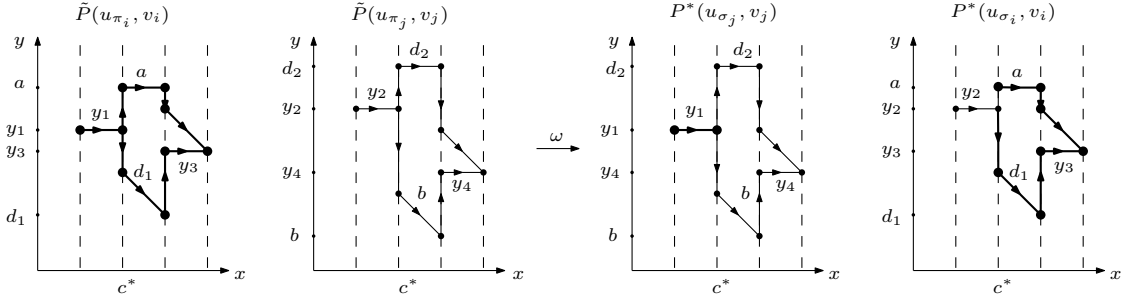}
\caption{The involution $\omega$ for case $2.1$ when the minimal of $\tilde{P}$ is the $\CMcal{C}$-pair $([y_1,y_2],c^*)$ and $b<y_1<y_2\le a$. \label{F:c2}}
\end{center}
\end{figure}

{\em Case $2.2$}: if $b<y_1$ and $y_4<a<y_2$, we use the notation $\tilde{P}(u_{\pi_i},x\vert_{c^*}^+)$ to denote the segment of the double lattice path $\tilde{P}(u_{\pi_i},v_i)$ from $u_{\pi_i}$ to its $c^*$-point, as well as the step from the $c^*$-point to the ending point of the horizontal step between lines $x=c^*$ and $x=c^*+1$. We use the notation $\tilde{P}(x\vert_{c^*}^-,v_i)$ to denote the segment of the double lattice path $\tilde{P}(u_{\pi_i},v_i)$ that is complement to the segment $\tilde{P}(u_{\pi_i},x\vert_{c^*}^+)$ except the down-vertical steps on line $x=c^*$ and $x=c^*+1$; similarly for $\tilde{P}(u_{\pi_j},v_j)$. Furthermore, $\tilde{P}(u_{\pi_i},x\vert_{c^*}^+)\tilde{P}(x\vert_{c^*}^-,v_j)$ is obtained by connecting two segments $\tilde{P}(u_{\pi_i},x\vert_{c^*}^+)$ and $\tilde{P}(x\vert_{c^*}^-,v_j)$ by new down-vertical steps on lines $x=c^*$ and $x=c^*+1$. Here we may define the pair $(\sigma,P^*)=\omega((\pi,\tilde{P}))$ where $\sigma=\pi\circ(i\,j)$ as follows. For $q\ne i$, $q\ne j$, we set $P^*(u_{\sigma_q},v_q)=\tilde{P}(u_{\pi_q},v_q)$ and
\begin{align*}
P^*(u_{\sigma_j},v_j)=\tilde{P}(u_{\pi_i},x\vert_{c^*}^+)\tilde{P}(x\vert_{c^*}^-,v_j),\,
P^*(u_{\sigma_i},v_i)=\tilde{P}(u_{\pi_j},x\vert_{c^*}^+)\tilde{P}(x\vert_{c^*}^-,v_i)
\end{align*}
where $P^*(u_{\sigma_j},v_j)$ is a double lattice path from $u_{\sigma_j}=u_{\pi_i}$ to $v_j$ and $P^*(u_{\sigma_i},v_i)$ is a double lattice path from $u_{\pi_j}=u_{\sigma_i}$ to $v_i$. This is guaranteed by the assumption $b<y_1$ and $y_4<a<y_2$. To be precise, $d_1<y_2$ holds because $d_1<y_1$ and $y_1<y_2$; $y_3<d_2$ holds because $y_3<a$ and $a<y_2<d_2$; $b<y_1$ and $a>y_4$ hold because of the assumption. Furthermore, $\omega$ is closed within all non-separable $g$-tuples of double lattice paths that belong to case $2.2$, that is, $P^*$ also belongs to case $2.2$ since $d_1<y_1$ and $y_3<a<y_2$, and by construction $\tilde{P},P^*$ are non-separable at the same points and the same $\CMcal{C}$-pairs. In particular, the minimum of $P^*$ is also the $\CMcal{C}$-pair $([y_1,y_2],c^*)$. See Figure~\ref{F:c3} for an example of case $2.2$.
\begin{figure}[htbp]
\begin{center}
\includegraphics[scale=1.0]{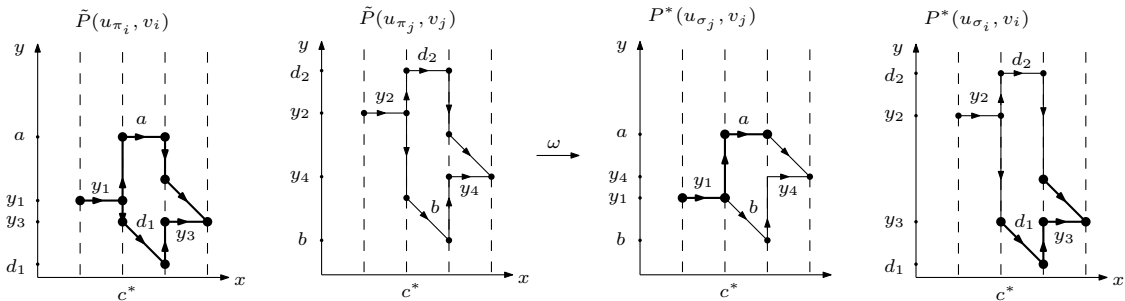}
\caption{The involution $\omega$ for case $2.2$ when the minimal of $\tilde{P}$ is the $\CMcal{C}$-pair $([y_1,y_2],c^*)$ such that $b<y_1$ and $y_4<a<y_2$.
\label{F:c3}}
\end{center}
\end{figure}

{\em Case $2.3$}: if $y_1\le b\le y_3$ and $a\ge y_2$, we use the notation $\tilde{P}(u_{\pi_i},x\vert_{c^*}^-)$ to denote the segment of the double lattice path $\tilde{P}(u_{\pi_i},v_i)$ from $u_{\pi_i}$ to its $c^*$-point, together with the step from the $c^*$-point to the ending point of the diagonal step between lines $x=c^*$ and $x=c^*+1$. We use the notation $\tilde{P}(x\vert_{c^*}^+,v_i)$ to denote the segment of the double lattice path $\tilde{P}(u_{\pi_i},v_i)$ that is complement to the segment $\tilde{P}(u_{\pi_i},x\vert_{c^*}^-)$ except the up-vertical steps on lines $x=c^*$ and $x=c^*+1$; similarly for $\tilde{P}(u_{\pi_j},v_j)$. Furthermore, $\tilde{P}(u_{\pi_i},x\vert_{c^*}^-)\tilde{P}(x\vert_{c^*}^+,v_j)$ is obtained by connecting two segments $\tilde{P}(u_{\pi_i},x\vert_{c^*}^-)$ and $\tilde{P}(x\vert_{c^*}^+,v_j)$ by new up-vertical steps on lines $x=c^*$ and $x=c^*+1$. Here we may define the pair $(\sigma,P^*)=\omega((\pi,\tilde{P}))$ where $\sigma=\pi\circ(i\,j)$ as follows.
For $q\ne i$, $q\ne j$, we set $P^*(u_{\sigma_q},v_q)=\tilde{P}(u_{\pi_q},v_q)$ and
\begin{align*}
P^*(u_{\sigma_j},v_j)=\tilde{P}(u_{\pi_i},x\vert_{c^*}^-)\tilde{P}(x\vert_{c^*}^+,v_j),\,
P^*(u_{\sigma_i},v_i)=\tilde{P}(u_{\pi_j},x\vert_{c^*}^-)\tilde{P}(x\vert_{c^*}^+,v_i)
\end{align*}
where $P^*(u_{\sigma_j},v_j)$ is a double lattice path from $u_{\sigma_j}=u_{\pi_i}$ to $v_j$ and $P^*(u_{\sigma_i},v_i)$ is a double lattice path from $u_{\pi_j}=u_{\sigma_i}$ to $v_i$. This is guaranteed by the assumption $y_1\le b\le y_3$ and $a\ge y_2$. To be precise, $d_2>y_1$ holds because $d_2>y_2$ and $y_2>y_1$; $d_1\le y_4$ holds because $d_1<y_1\le b\le y_4$; $a\ge y_2$ and $b\le y_3$ hold because of the assumption. See Figure~\ref{F:c4} for an example of case $2.3$.
\begin{figure}[htbp]
\begin{center}
\includegraphics[scale=1.0]{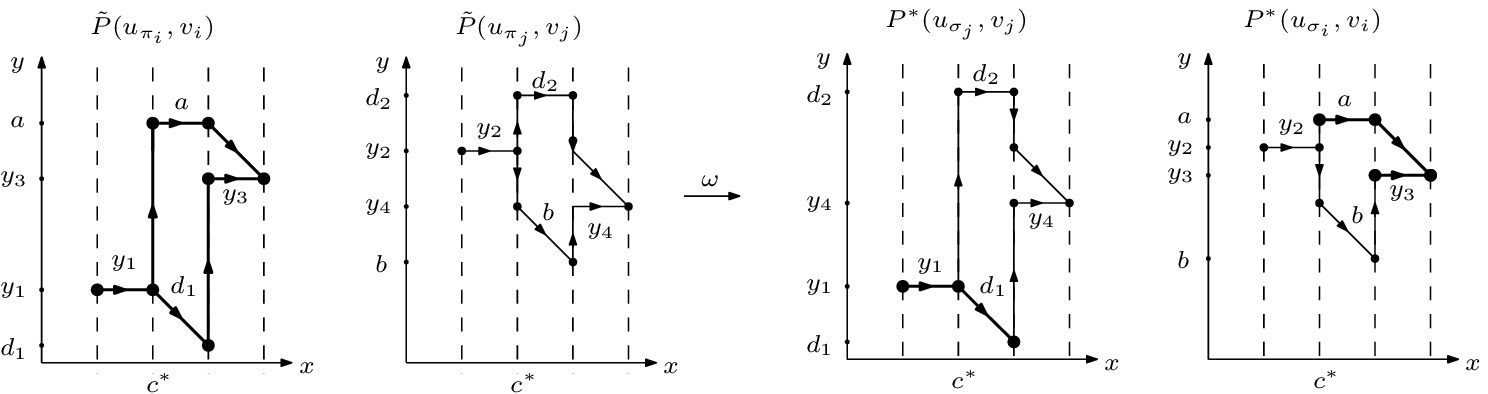}
\caption{The involution $\omega$ for case $2.3$ when the minimal of $\tilde{P}$ is the $\CMcal{C}$-pair $([y_1,y_2],c^*)$ such that $y_1\le b\le y_3$ and $a\ge y_2$.
\label{F:c4}}
\end{center}
\end{figure}
Furthermore, $\omega$ is closed within all non-separable $g$-tuples of double lattice paths that belong to case $2.3$, because $P^*$ also belongs to case $2.3$ since $y_1\le b\le y_4$ and $d_2\ge y_2$, and by construction $\tilde{P},P^*$ are non-separable at the same points and the same $\CMcal{C}$-pairs. In particular, the minimum of $P^*$ is also the $\CMcal{C}$-pair $([y_1,y_2],c^*)$. See Figure~\ref{F:c4} for an example of case $2.3$.

{\em Case $2.4$}: if $y_1\le b\le y_3$ and $y_4<a<y_2$, then we may define the pair $(\sigma,P^*)=\omega((\pi,\tilde{P}))$ where $\sigma=\pi\circ(i\,j)$ as follows. For $q\ne i$, $q\ne j$, we set $P^*(u_{\sigma_q},v_q)=\tilde{P}(u_{\pi_q},v_q)$ and
\begin{align*}
P^*(u_{\sigma_j},v_j)=\tilde{P}(u_{\pi_i},x\vert_{c^*+1})\tilde{P}(x\vert_{c^*+1},v_j),\,
P^*(u_{\sigma_i},v_i)=\tilde{P}(u_{\pi_j},x\vert_{c^*+1})\tilde{P}(x\vert_{c^*+1},v_i)
\end{align*}
where $P^*(u_{\sigma_j},v_j)$ is a double lattice path from $u_{\sigma_j}=u_{\pi_i}$ to $v_j$ and $P^*(u_{\sigma_i},v_i)$ is a double lattice path from $u_{\pi_j}=u_{\sigma_i}$ to $v_i$. This is guaranteed by the assumption $y_1\le b\le y_3$ and $y_4<a<y_2$. To be precise, $d_1<y_4<a$ holds because $d_1<y_1\le b\le y_4<a$; and $b\le y_3<d_2$ holds because $b\le y_3<a<y_2\le d_2$.
Furthermore, $\omega$ is closed within all non-separable $g$-tuples of double lattice paths that belong to case $2.4$, because $P^*$ also belongs to case $2.4$ since $y_1\le b\le y_4$ and $y_3<a<y_2$, and by construction $\tilde{P},P^*$ are non-separable at the same points and the same $\CMcal{C}$-pairs. In particular, the minimum of $P^*$ is also the $\CMcal{C}$-pair $([y_1,y_2],c^*)$. See Figure~\ref{F:c5} for an example of case $2.4$.
\begin{figure}[htbp]
\begin{center}
\includegraphics[scale=1.0]{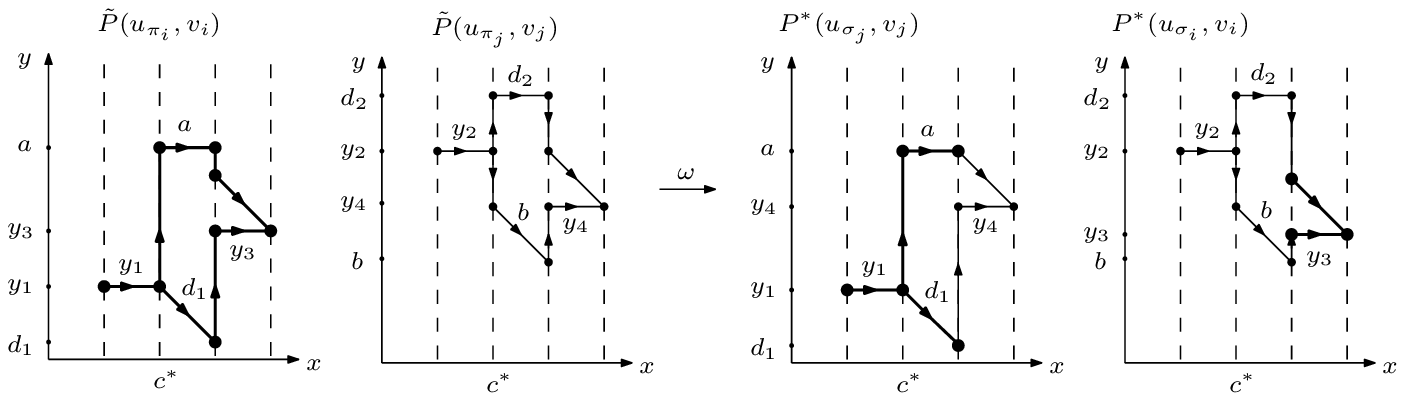}
\caption{The involution $\omega$ for case $2.4$ when the minimal of $\tilde{P}$ is the $\CMcal{C}$-pair $([y_1,y_2],c^*)$ such that $y_1\le b\le y_3$ and $y_4<a<y_2$.
\label{F:c5}}
\end{center}
\end{figure}

For each case (case $1$ or case $2.1$-$2.4$), it is clear that $(\pi,\tilde{P})\mapsto(\sigma,P^*)=\omega((\pi,\tilde{P}))$ is an involution which preserves the weight of the double lattice path and changes the inversion of the permutation by $1$.

{\em Proof of Theorem~\ref{T:thick}}.
From (\ref{E:gen1}) and the involution $(\pi,\tilde{P})\mapsto (\sigma,P^*)$ in Subsection~\ref{ss:invo}, we find that only the generating function for all pairs $(\mbox{id},\tilde{P})$ where $\tilde{P}$ is any separable $g$-tuple of double lattice paths, is remained on the right hand side of (\ref{E:gen1}). In combination of (\ref{E:nonspe}), (\ref{E:HG}) follows immediately.
\qed

{\em Proof of Corollary~\ref{C:thick2}}. We refer the readers to Chapter $7$ of \cite{Stanley:ec2} for a full description of the exponential specialization. Let $[x_1x_2\cdots x_n]f$ denote the coefficient of $x_1x_2\cdots x_n$ in $f$, the exponential specialization ex of the symmetric function $f$ is defined as
\begin{align*}
\mbox{ex}(f)=\sum_{n\ge 0}[x_1x_2\cdots x_n]f\frac{t^n}{n!}
\end{align*}
and $\mbox{ex}_1(f)=\mbox{ex}(f)_{t=1}$. Let $N=\vert \lambda/\mu\vert$ and $a_{i,j}=\vert\Theta_i\#\Theta_j\vert$, then one has \begin{align*}
\mbox{ex}(p_{1^r}(X))=t^r,\,\,
\mbox{ex}(s_{\lambda/\mu}(X))=f^{\lambda/\mu}(N!)^{-1}t^{N}\,\,
\mbox{ and }\,\,
\mbox{ex}(s_{\Theta_i\#\Theta_j}(X))=f^{\Theta_i\#\Theta_j}
(a_{i,j}!)^{-1}t^{a_{i,j}}.
\end{align*}
Consequently (\ref{E:HG1}) follows directly after we apply $\mbox{ex}_1$ on both sides of (\ref{E:HG}).
\qed
\section{Application to the enumeration of $m$-strip tableaux}\label{E:app}
We will count the number of $m$-strip tableaux by applying Corollary~\ref{C:thick2}. It should be pointed out that the enumeration of $2k$-strip tableaux is a direct consequence of Theorem~\ref{T:HG}; see \cite{HG}. In \cite{MPP}, Morales, Pak and Panova also found that the enumeration of $2k$-strip tableaux can be simplified by applying Lascoux-Pragacz's theorem \cite{LP}, or more generally, Hamel and Goulden's theorem (Theorem~\ref{T:HG}).

\subsection{The $m$-strip tableaux}
Baryshnikov and Romik \cite{BR:07} counted the number of $m$-strip tableaux as a generalization of the classical formula from D. Andr\'{e} \cite{And:79} on the number of up-down permutations.
\begin{definition}[$m$-strip tableaux]
An {\em $m$-strip diagram} $\CMcal{D}_m(\tilde{\lambda};\tilde{\mu})$ contains three parts: head $\tilde{\lambda}$, tail $\tilde{\mu}$ and body. The body of an $m$-strip diagram consists of an elongated hexagonal shape with $n$ columns, where the numbers of boxes in the $n$ columns are
\begin{align*}
\lceil\frac{m+1}{2}\rceil,\lceil\frac{m+1}{2}\rceil+1,\ldots,m-1,m,m,\ldots,m,m-1,\ldots,
\lceil\frac{m+1}{2}\rceil+1, \lceil\frac{m+1}{2}\rceil.
\end{align*}
The first (resp. last) $\lfloor m/2\rfloor$ columns forms a standard diagram and the columns where each contains $m$ boxes forms a skew diagram of shape
\begin{align*}
(\underbrace{n-2\lfloor \frac{m}{2}\rfloor+2,\ldots,n-2\lfloor \frac{m}{2}\rfloor+2}_{\scriptsize{\mbox{length}}:\,m},n-2\lfloor \frac{m}{2}\rfloor+1,\ldots,2,1)/(n-2\lfloor \frac{m}{2}\rfloor+1,\ldots,2,1).
\end{align*}
The {\em head} $\tilde{\lambda}$ and {\em tail} $\tilde{\mu}$ are standard diagrams of length at most $\lfloor m/2\rfloor$ that are rotated and connected to the body by leaning against the sides of the body. The empty partition $(0)$ is always denoted by $\varnothing$ and {\em an $m$-strip tableau} is a standard Young tableau of the $m$-strip shape.
\end{definition}
\begin{remark}
Our definition of $m$-strip diagrams is slightly different to the one in \cite{BR:07} because \cite{BR:07} contains a minor typo on the number of boxes in the leftmost and the rightmost columns of any $m$-strip diagram. Our notation $\CMcal{D}_m(\tilde{\lambda};\tilde{\mu})$ is the notation $D$ in \cite{BR:07} and we find it more convenient to use $\CMcal{D}_m(\tilde{\lambda};\tilde{\mu})$ to represent some $m$-strip diagrams for small $m$.
\end{remark}
\begin{example}
See Figure~\ref{F:1} for an example of $6$-strip diagram with head partition $\tilde{\lambda}$ and tail partition $\tilde{\mu}$ from \cite{BR:07}. The standard diagrams of $\tilde{\lambda}=(3,1),\tilde{\mu}=(2,2)$ are rotated and attached to the body of the $6$-strip diagram.
\end{example}
\begin{center}
\begin{figure}[htbp]
\includegraphics[scale=0.7]{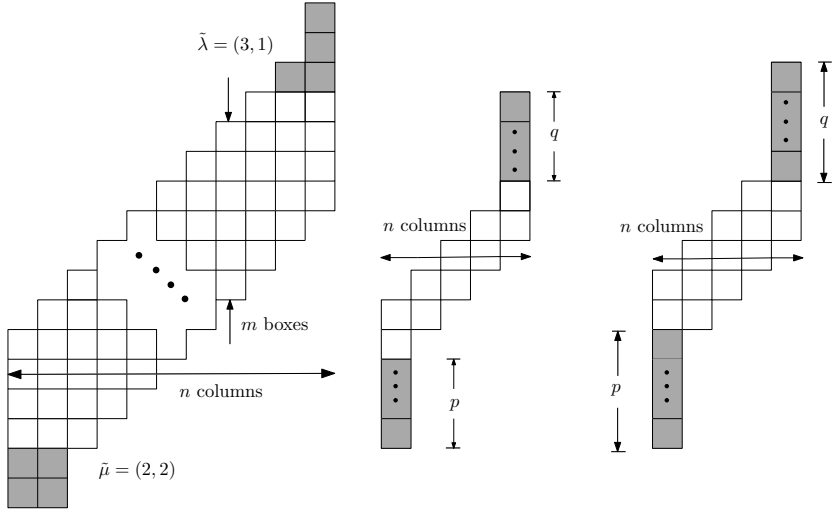}
\caption{A $6$-strip diagram $\CMcal{D}_{6}(\tilde{\lambda};\tilde{\mu})$, a $2$-strip diagram $\CMcal{D}_2((q);(p))$ and a $3$-strip diagram $\CMcal{D}_3((q);(p))$ (left, middle, right).
\label{F:1}}
\end{figure}
\end{center}
To avoid confusion, we adopt the definitions and notations of Euler numbers and tangent numbers from \cite{BR:07}. A permutation $\pi=\pi_1\pi_2\cdots \pi_n\in S_n$ is called an {\it up-down permutation} if $\pi_1<\pi_2>\pi_3<\pi_4>\cdots$. It is well-known
that the exponential generating function of the numbers $A_n$ of up-down
permutations of $[n]=\{1,2,\ldots,n\}$ is
\begin{eqnarray}\label{E:andre}
\sum_{n\ge 0}A_n\frac{x^n}{n!}=\sec x+\tan x.
\end{eqnarray}
This is also called Andr\'{e}'s theorem \cite{And:79}, which connects the numbers $A_n$ with the {\em Euler numbers} $E_n$ and {\em tangent numbers} $T_n$
by the Taylor expansions of $\sec x$ and $\tan x$, that is,
\begin{align*}
\sec x=\sum_{n=0}^{\infty}\frac{(-1)^n E_{2n}x^{2n}}{(2n)!}\quad\mbox{ and }\quad
\tan x=\sum_{n=1}^{\infty}\frac{T_{n}x^{2n-1}}{(2n-1)!}.
\end{align*}
This implies that
\begin{align}\label{E:eutan}
A_{2n}=(-1)^n E_{2n}\quad\mbox{ and }\quad A_{2n-1}=T_n.
\end{align}
It should be mentioned that Euler numbers are defined differently in some literature \cite{LP,Stanley:ec2}.

It is clear that an up-down permutation of $[2n]$ can be identified as a $2$-strip tableau of shape $\CMcal{D}_2(\varnothing;\varnothing)$. By thickening the $2$-strip diagram, Baryshnikov and Romik \cite{BR:07} introduced the $m$-strip diagram and enumerated the $m$-strip tableaux via transfer operators, which proved that the determinant to count $m$-strip tableaux has order $\lfloor m/2\rfloor$. This is certainly to their advantage that Baryshnikov and Romik's determinant for $(2k+1)$-strip diagrams is much simpler than the one directly from Hamel and Goulden's theorem (Theorem~\ref{T:HG}). We next recall the Baryshnikov and Romik's determinant for the $m$-strip tableaux. We define the numbers
\begin{align}\label{E:Abar}
\bar{A}_n=\frac{A_n}{n!},\,\tilde{A}_n=\frac{\bar{A}_n}{2^{n+1}-1}\,\mbox{ and }\,
\hat{A}_n=\frac{(2^n-1)\bar{A}_n}{2^n(2^{n+1}-1)},
\end{align}
and denote the head Young diagram by $\tilde{\lambda}=(\tilde{\lambda}_1,\tilde{\lambda}_2,\ldots,\tilde{\lambda}_k)$ and the tail Young diagram by $\tilde{\mu}=(\tilde{\mu}_1,\tilde{\mu}_2,\ldots,\tilde{\mu}_k)$ where $k=\lfloor m/2\rfloor$. For any non-negative integers $p,q$, we denote by $\alpha_{n,2}(p,q)$ and $\alpha_{n,3}(p,q)$ the number of $2$-strip tableaux of shape $\CMcal{D}_{2}((q);(p))$ and the number of $3$-strip tableaux of shape $\CMcal{D}_{3}((q);(p))$ where the empty partition $(0)$ is denoted by $\varnothing$. In other words,
\begin{align*}
\alpha_{n,2}(p,q)=f^{\CMcal{D}_{2}((q);(p))}\quad \mbox{and} \quad\alpha_{n,3}(p,q)=f^{\CMcal{D}_{3}((q);(p))}.
\end{align*}
In particular, $\alpha_{n,2}(0,0)=A_{2n}$ and some values of $\alpha_{n,3}(p,q)$ are given in Theorem~\ref{C:345}. Note that $\alpha_{n,2}(p,q)=\alpha_{n,2}(q,p)$ holds for any non-negative integers $p$ and $q$. This is true because for any standard Young tableau $T$ of shape $\CMcal{D}_{2}((q);(p))$, if we replace every entry $w$ of $T$ by $2n+p+q+1-w$ and flip the diagram $\CMcal{D}_{2}((q);(p))$ upside-down and reverse it left-to-right, we obtain a standard Young tableau of shape $\CMcal{D}_{2}((p);(q))$. Similarly $\alpha_{n,3}(p,q)=\alpha_{n,3}(q,p)$ holds for any non-negative integers $p$ and $q$.
Furthermore, we define the numbers $X_{2n-1}(p,q)$ and $Y_{2n-1}(p,q)$ as below:
\begin{align*}
X_{2n-1}(p,q)=\frac{\alpha_{n,2}(p,q)}{(2n+p+q)!}\quad \mbox{and} \quad
Y_{2n-2}(p,q)=\frac{\alpha_{n,3}(p,q)}{(3n+p+q-2)!}.
\end{align*}
Our notation $\alpha_{n,2}(p,q)$ is $\alpha_n$ in \cite{BR:07} and we need the parameters $p,q$ to describe the thickened strips later. For the readers' convenience, we should mention that the left $2$-strip diagram in Figure $4$ of \cite{BR:07} should be the middle one in Figure~\ref{F:1}. Baryshnikov and Romik proved that
\begin{theorem}[\cite{BR:07}]\label{T:1}
Let $L_i=\tilde{\lambda}_i+k-i$ and $M_i=\tilde{\mu}_i+k-i$ for $1\le i\le k=\lfloor m/2\rfloor$. Then the number of standard Young tableaux of shape $\CMcal{D}_m(\tilde{\lambda};\tilde{\mu})$ is given by
\begin{align}\label{E:even}
f^{\CMcal{D}_m(\tilde{\lambda};\tilde{\mu})}&=(-1)^{\binom{k}{2}}\vert \CMcal{D}_m(\tilde{\lambda};\tilde{\mu})\vert!\det[X_{2n-m+1}(L_i,M_j)]_{i,j=1}^k \,\mbox{ if }\, m=2k \,\mbox{ or by}\\
\label{E:odd}f^{\CMcal{D}_m(\tilde{\lambda};\tilde{\mu})}&=(-1)^{\binom{k}{2}}\vert \CMcal{D}_m(\tilde{\lambda};\tilde{\mu})\vert!\det[Y_{2n-m+1}(L_i,M_j)]_{i,j=1}^k
\,\mbox{ if }\, m=2k+1.
\end{align}
\end{theorem}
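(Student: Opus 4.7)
The plan is to derive both (\ref{E:even}) and (\ref{E:odd}) as instances of Corollary~\ref{C:thick2}, each applied to a carefully chosen outside nested decomposition of $\CMcal{D}_m(\tilde{\lambda};\tilde{\mu})$ with exactly $k=\lfloor m/2\rfloor$ pieces. As the author notes, the even case $m=2k$ is already a direct consequence of Hamel and Goulden's Theorem~\ref{T:HG}; the genuinely new content is the odd case $m=2k+1$, for which using thickened strips with $2\times 2$ blocks is essential to keep the determinant of order $k$ rather than roughly $n$.

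For $m=2k$, I would decompose $\CMcal{D}_{2k}(\tilde{\lambda};\tilde{\mu})$ into $k$ horizontal strips, letting $\Theta_i$ consist of the $i$-th row of the rotated head, rows $2i-1$ and $2i$ of the body, and the $i$-th row of the rotated tail. Each piece is a strip in the sense of Definition~\ref{D:ribbon}; the decomposition is outside and nested because all boxes on each diagonal of the body go in the same direction, and $r=0$ (no special corner is shared). Using Definition~\ref{D:tcs}, one reads off that the thickened cutting strip $H(\Phi)$ is the body of a 2-strip diagram with head of length $L_1$ and tail of length $M_1$, so that the segment $\Theta_i\#\Theta_j$ from $p(\Theta_j)$ to $q(\Theta_i)$ is exactly the 2-strip diagram $\CMcal{D}_2((M_j);(L_i))$; the shift $L_i=\tilde{\lambda}_i+k-i$ arises because the starting box of $\Theta_i$ sits one position further down the left staircase of the body per increase of $i$, and similarly for $M_j$ on the right. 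Corollary~\ref{C:thick2} then yields (\ref{E:even}) with entries $X_{2n-m+1}(L_i,M_j)$; the sign $(-1)^{\binom{k}{2}}$ comes from reversing rows (or columns) to align the natural indexing of the cutting-strip segments $\CMcal{D}_2((M_j);(L_i))$ with the Baryshnikov--Romik convention, using the symmetry $\alpha_{n,2}(p,q)=\alpha_{n,2}(q,p)$.

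For $m=2k+1$, I would use the analogous decomposition into $k$ thickened strips, this time genuinely using the extra flexibility of $2\times 2$ blocks to absorb the central row of the body while keeping each $\Theta_i$ free of $3\times 2$ and $2\times 3$ blocks. The decomposition would be arranged so that the thickened cutting strip $H(\Phi)$ is the body of a 3-strip diagram with head $L_1$ and tail $M_1$; consequently, each nonempty segment $\Theta_i\#\Theta_j$ is a thickened strip that coincides with (or, after passing through the identifications permitted by $\CMcal{C}$-pairs and common special corners, has the same standard-tableau count as) the 3-strip shape $\CMcal{D}_3((M_j);(L_i))$. Corollary~\ref{C:thick2} then delivers (\ref{E:odd}) with entries $Y_{2n-m+1}(L_i,M_j)$, and the sign $(-1)^{\binom{k}{2}}$ arises via the same row-reversal mechanism (now invoking $\alpha_{n,3}(p,q)=\alpha_{n,3}(q,p)$).

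The main obstacle is describing the odd-case decomposition precisely and verifying it has the claimed cutting strip. Three things must be checked: (i) each $\Theta_i$ is a genuine thickened strip (no forbidden $3\times 2$ or $2\times 3$ block), which dictates how the central row of the body may be partitioned among the $\Theta_i$'s; (ii) the nestedness condition of Definition~\ref{D:nesttodec} holds on every diagonal (including the diagonals of common special corners on the central row); (iii) each $\Theta_i\#\Theta_j$ is indeed $\CMcal{D}_3((M_j);(L_i))$. Once these geometric checks are made, what remains is routine bookkeeping on the positions of common special corners and on the shifts $L_i=\tilde{\lambda}_i+k-i$ and $M_j=\tilde{\mu}_j+k-j$.
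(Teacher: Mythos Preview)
Your proposal is correct and follows essentially the same route as the paper. The paper's decomposition is described as ``peeling off successive maximal outer (thickened) strips'' $\Theta_1,\ldots,\Theta_k$ from the outside of $\CMcal{D}_m(\tilde{\lambda};\tilde{\mu})$, which is exactly your horizontal slicing; in the paper's indexing each $\Theta_i$ is a $2$-strip (resp.\ $3$-strip) of $n-k+1$ columns with head $(L_i)$ and tail $(M_{k-i+1})$, so that $\Theta_i\#\Theta_j$ has head $(L_i)$ and tail $(M_{k-j+1})$, and the column reversal $j\mapsto k-j+1$ (together with $\alpha_{n,2}(p,q)=\alpha_{n,2}(q,p)$, resp.\ $\alpha_{n,3}(p,q)=\alpha_{n,3}(q,p)$) produces the sign $(-1)^{\binom{k}{2}}$, just as you anticipated.
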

\begin{remark}
Theorem~\ref{T:1} is a combination of Theorem 4 and 5 in \cite{BR:07}. Here we use the combinatorial interpretations of $\alpha_{n,2}(p,q),\alpha_{n,3}(p,q)$ to introduce the numbers $X_{2n-1}(p,q),Y_{2n-2}(p,q)$, whose expressions in terms of the numbers $\hat{A}_i$, $\tilde{A}_i$ and $\bar{A}_i$ can be derived by the recursions of $\alpha_{n,2}(p,q)$ and $\alpha_{n,3}(p,q)$. Here we omit the computational details.
\end{remark}
Baryshnikov and Romik \cite{BR:07} also presented some explicit formulas for small $m$.
We will establish Theorem~\ref{C:345} by decomposing $3$-strip tableaux directly and by choosing two different outside nested decompositions respectively for $4,5$-strip tableaux.
\begin{theorem}[\cite{BR:07}]\label{C:345}
Some numbers of $3$-strip tableaux are
\begin{align}\label{E:3strip}
\alpha_{n,3}(0,0)&=f^{\CMcal{D}_3(\varnothing;\varnothing)}
=\frac{(3n-2)!T_n}{(2n-1)!2^{2n-2}}=\frac{(3n-2)!\bar{A}_{2n-1}}{2^{2n-2}},\\
\label{E:3strip1}\alpha_{n,3}(0,1)&=f^{\CMcal{D}_3((1);\varnothing)}
=\frac{(3n-1)!T_n}{(2n-1)!2^{2n-1}}=\frac{(3n-1)!\bar{A}_{2n-1}}{2^{2n-1}},\\
\label{E:3strip2}\alpha_{n,3}(1,1)&=f^{\CMcal{D}_3((1);(1))}
=\frac{(3n)!(2^{2n-1}-1)T_n}{(2n-1)!2^{2n-1}(2^{2n}-1)}
=(3n)!\hat{A}_{2n-1}.
\end{align}
Some numbers of $4$-strip tableaux are
\begin{align}\label{E:4strip1}
f^{\CMcal{D}_4(\varnothing;\varnothing)}&=\binom{4n-2}{2n-1}T_n^2
+\binom{4n-2}{2n-2}E_{2n-2}E_{2n}=(4n-2)!
\det\left[\begin{array}{ll} \bar{A}_{2n-1} & \bar{A}_{2n}\\
\bar{A}_{2n-2} & \bar{A}_{2n-1}
\end{array}\right],\\
\label{E:4strip2}f^{\CMcal{D}_4((1);(1))}&=\binom{4n}{2n}E_{2n}^2
-\binom{4n}{2n-2}E_{2n-2}E_{2n+2}=(4n)!\det\left[\begin{array}{ll} \bar{A}_{2n} & \bar{A}_{2n+2}\\
\bar{A}_{2n-2} & \bar{A}_{2n}
\end{array}\right],
\end{align}
and the number of $5$-strip tableaux without head and tail is
\begin{align}\label{E:5strip}
f^{\CMcal{D}_5(\varnothing;\varnothing)}
=\frac{(5n-6)!\,T_{n-1}^2}{((2n-3)!)^2\,2^{4n-6}(2^{2n-2}-1)}
=(5n-6)!\det\left[\begin{array}{ll} \tilde{A}_{2n-3} & \hat{A}_{2n-3}\\
\hat{A}_{2n-3} & \tilde{A}_{2n-3}
\end{array}\right].
\end{align}
\end{theorem}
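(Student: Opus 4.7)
The plan is to deduce each formula in Theorem~\ref{C:345} by specializing Corollary~\ref{C:thick2} to a carefully chosen outside nested decomposition of the corresponding $m$-strip diagram, except for the 3-strip identities (\ref{E:3strip})--(\ref{E:3strip2}), which I would handle by a direct combinatorial decomposition of the tableaux. The decomposition used for $m=4$ is an outside decomposition (all pieces are strips), which reduces Corollary~\ref{C:thick2} to the classical Hamel--Goulden Theorem~\ref{T:HG}; the decomposition used for $m=5$ is a genuine outside nested decomposition whose pieces are thickened strips sharing a non-trivial chain of common special corners.

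For the 3-strip formulas I would argue combinatorially. A standard Young tableau of shape $\CMcal{D}_3((q);(p))$ decomposes into its outer zigzag trace (the entries on the top and bottom perimeters of the diagram, which form an up-down permutation of length $2n-1+p+q$ and contribute the factor $T_n$), together with the interior entries of the middle row, each of which has two essentially independent placement choices modulo the row/column monotonicity constraints, yielding the appropriate power-of-$2$ denominator. Combining this with the multinomial factor distributing labels $\{1,\dots,3n+p+q-2\}$ across the two pieces then produces (\ref{E:3strip})--(\ref{E:3strip2}) after routine case analysis on $(p,q)\in\{(0,0),(0,1),(1,1)\}$.

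For the 4-strip formulas (\ref{E:4strip1})--(\ref{E:4strip2}) I would apply Theorem~\ref{T:HG} with an outside decomposition of $\CMcal{D}_4(\tilde{\lambda};\tilde{\mu})$ into two zigzag border strips $\theta_1,\theta_2$, one tracing the upper perimeter of the hexagonal body and the other tracing the lower. Every shape $\theta_i\#\theta_j$ that arises is itself a zigzag ribbon of length $\ell_{ij}\in\{2n-2,2n-1,2n\}$, and the number of SYT of a zigzag of length $\ell$ is the up-down number $A_\ell$. Rewriting via $A_{2n-1}=T_n$, $A_{2n}=(-1)^nE_{2n}$, and $\bar{A}_\ell=A_\ell/\ell!$ then transforms the $2\times 2$ determinant arising from Corollary~\ref{C:thick2} into the stated binomial form.

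For the 5-strip formula (\ref{E:5strip}) I would apply Corollary~\ref{C:thick2} with an outside nested decomposition of $\CMcal{D}_5(\varnothing;\varnothing)$ into two thickened strips $\Theta_1,\Theta_2$ overlapping along a chain of common special corners positioned along the central row of the diagram. The $p_{1^r}$ factor of (\ref{E:HG}) vanishes under the exponential specialization, leaving $(5n-6)!$ times a $2\times 2$ determinant whose entries take the form $f^{\Theta_i\#\Theta_j}/\lvert\Theta_i\#\Theta_j\rvert!$. Each $\Theta_i\#\Theta_j$ is a thickened strip whose SYT count can be computed via the already-established 3-strip formulas (\ref{E:3strip})--(\ref{E:3strip2}) at $n'=n-1$; after substitution and use of the definitions (\ref{E:Abar}), the four entries collapse to $\tilde{A}_{2n-3}$ on the diagonal and $\hat{A}_{2n-3}$ off the diagonal, yielding (\ref{E:5strip}). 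The principal obstacle is selecting the nested decomposition so that the chain of common special corners is positioned to make the four $\Theta_i\#\Theta_j$ shapes reduce to expressions in $T_{n-1}$ with the correct power-of-$2$ denominators, and then matching the factorial ratios $\lvert\Theta_i\#\Theta_j\rvert!$ against the 3-strip prefactors $(3n'+p+q-2)!$ to confirm the identifications with $\tilde{A}_{2n-3}$ and $\hat{A}_{2n-3}$.
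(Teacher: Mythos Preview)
Your plan for the $4$-strip and $5$-strip cases matches the paper's: decompose into two zigzag strips (resp.\ two zigzag thickened strips) and apply Corollary~\ref{C:thick2}. Two concrete problems remain, however.

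\textbf{The $3$-strip argument does not work as sketched.} Your proposed bijection (outer zigzag of length $2n-1+p+q$ contributing $T_n$, plus interior middle-row cells each with ``two independent placement choices'', plus a multinomial distributing labels) does not reproduce the formula. For $\CMcal{D}_3(\varnothing;\varnothing)$ there are $3n-2$ cells; an outer zigzag of length $2n-1$ leaves $n-1$ interior cells, so a multinomial factor would be $\binom{3n-2}{n-1}$ rather than $(3n-2)!/(2n-1)!$, and ``two choices per interior cell'' would give $2^{n-1}$ rather than the required $2^{2n-2}$. The paper does \emph{not} give a direct bijection; it instead proves two recursions by elementary box-insertion bijections (Lemmas~\ref{L:funlem} and~\ref{L:31}), sums Lemma~\ref{L:31} over $i$ to obtain a quadratic recursion, translates to the ODE $f'(x)=1+g(x)^2$ with $f=2g$, and solves to get $g(x)=\tan(x/2)$, which yields (\ref{E:3strip}) and (\ref{E:3strip1}); (\ref{E:3strip2}) then follows from the relation (\ref{E:reTf}) once an auxiliary count is known.

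\textbf{The $5$-strip entries are not all covered by (\ref{E:3strip})--(\ref{E:3strip2}).} In the paper's outside nested decomposition the diagonal entries $\Theta_i^{*}\#\Theta_i^{*}=\Theta_i^{*}$ have shape $\CMcal{C}_{3(n-1)}$, obtained from $\CMcal{D}_3(\varnothing;(1))$ by attaching one extra box to the right of its top-right cell; this is \emph{not} one of the shapes $\CMcal{D}_3((q);(p))$ with $p,q\in\{0,1\}$, so its SYT count is not among (\ref{E:3strip})--(\ref{E:3strip2}). The paper derives $f^{\CMcal{C}_{3n}}=(3n)!\tilde{A}_{2n-1}$ separately, via a second quadratic recursion (Lemma~\ref{L:32}) and the Riccati equation $-h'(x)=-1+2h(x)/x-h(x)^2$, whose solution is $h(x)=-\cot x+1/x$. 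Only with this additional input do the diagonal entries become $\tilde{A}_{2n-3}$ and the off-diagonal ones $\hat{A}_{2n-3}$, completing (\ref{E:5strip}). Your proposal should account for this extra shape and its enumeration.
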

\subsection{Proof of Theorem~\ref{T:1} and Theorem \ref{C:345}}
\subsubsection{Proof of (\ref{E:even})}
We count the number $f^{\CMcal{D}_{2k}(\tilde{\lambda};\tilde{\mu})}$ of $2k$-strip tableaux by choosing an outside decomposition $\phi=(\theta_1,\theta_2,\ldots,\theta_k)$ of the $2k$-strip diagram $\CMcal{D}_{2k}(\tilde{\lambda};\tilde{\mu})$, which is a special outside nested decomposition without common special corners. Given a $2k$-strip diagram $\CMcal{D}_{2k}(\tilde{\lambda};\tilde{\mu})$, we can peel this diagram off into successive maximal outer strips $\theta_1,\theta_{2},\ldots,\theta_{k}$ beginning from the outside; see the left one in Figure~\ref{F:pic-6}.
\begin{center}
\begin{figure}[htbp]
\includegraphics[scale=0.7]{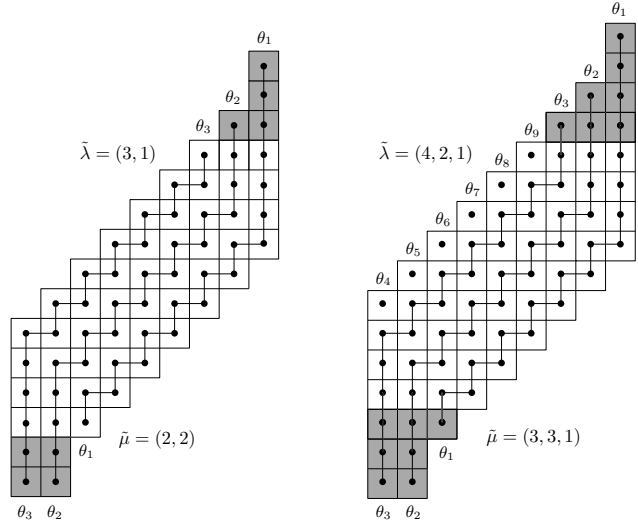}
\caption{The outside nested decomposition $\phi=(\theta_1,\theta_2,\theta_3)$ that we choose for the $6$-strip diagram $\CMcal{D}_6((3,1);(2,2))$ (left) and the outside nested decomposition that we will not choose for the $7$-strip diagram $\CMcal{D}_7((4,2,1);(3,3,1))$ (right).
\label{F:pic-6}}
\end{figure}
\end{center}
We recall the numbers $L_i=\tilde{\lambda}_i+k-i$ and $M_i=\tilde{\mu}_i+k-i$, for $1\le i\le k$. In the outside decomposition $\phi$, every strip $\theta_i$ is a $2$-strip of $(n-k+1)$ columns, with head partition $(L_i)$ and tail partition $(M_{k-i+1})$. The number of such tableaux are denoted by $\alpha_{n-k+1,2}(L_i,M_{k-i+1})$, that is, $f^{\theta_i}=\alpha_{n-k+1,2}(L_i,M_{k-i+1})$. By Definition \ref{D:operator1}, we see that the thickened cutting strip $H(\phi)$ is a $2$-strip of $(n-k+1)$ columns, with head partition $(L_1)$ and tail partition $(M_1)$. So it follows that $\theta_i\#\theta_j$ is a $2$-strip diagram with $(n-k+1)$ columns, with head partition $(L_i)$ and with tail partition $(M_{k-j+1})$. Consequently, $f^{\theta_i\#\theta_j}=\alpha_{n-k+1,2}(M_{k-j+1},L_i)=\alpha_{n-k+1,2}(L_i,M_{k-j+1})$. By Corollary~\ref{C:thick2} we know that the number $f^{\CMcal{D}_{2k}(\tilde{\lambda};\tilde{\mu})}$ of standard Young tableaux of $2k$-strip shape with $n$ columns, is expressed as a determinant where the $(i,j)$-th entry is $\alpha_{n-k+1,2}(L_i,M_{k-j+1})/(2n-2k+L_i+M_{k-j+1}+2)!
=X_{2n-2k+1}(L_i,M_{k-j+1})$. That is to say,
\begin{align*}
f^{\CMcal{D}_{2k}(\tilde{\lambda};\tilde{\mu})}&=\vert\CMcal{D}_{2k}(\tilde{\lambda};\tilde{\mu})
\vert!\det[X_{2n-2k+1}(L_i,M_{k-j+1})]_{i,j=1}^{k}\\
&=(-1)^{\binom{k}{2}}\vert\CMcal{D}_{2k}(\tilde{\lambda};\tilde{\mu})
\vert!\det[X_{2n-2k+1}(L_i,M_j)]_{i,j=1}^{k},
\end{align*}
which is (\ref{E:even}).\qed
\subsubsection{Proof of (\ref{E:odd})}\label{ss:odd}
We observe that any outside decomposition of $(2k+1)$-strip diagram will not reduce the order of the Jacobi-Trudi determinant in Theorem~\ref{T:Jacobi-Trudi} because the minimal number of strips contained in any outside decomposition is exactly the number of columns in any $(2k+1)$-strip diagram $\CMcal{D}_{2k+1}(\tilde{\lambda};\tilde{\mu})$; see the outside decomposition of the $7$-strip diagram $\CMcal{D}_7((4,2,1);(3,3,1))$ in Figure~\ref{F:pic-6}.

Given a $(2k+1)$-strip diagram $\CMcal{D}_{2k+1}(\tilde{\lambda};\tilde{\mu})$, we can peel this diagram off into successive maximal outer thickened strips $\Theta_1,\Theta_{2},\ldots,\Theta_{k}$ beginning from the outside; see Figure~\ref{F:pic-81}.
\begin{center}
\begin{figure}[htbp]
\includegraphics[scale=0.7]{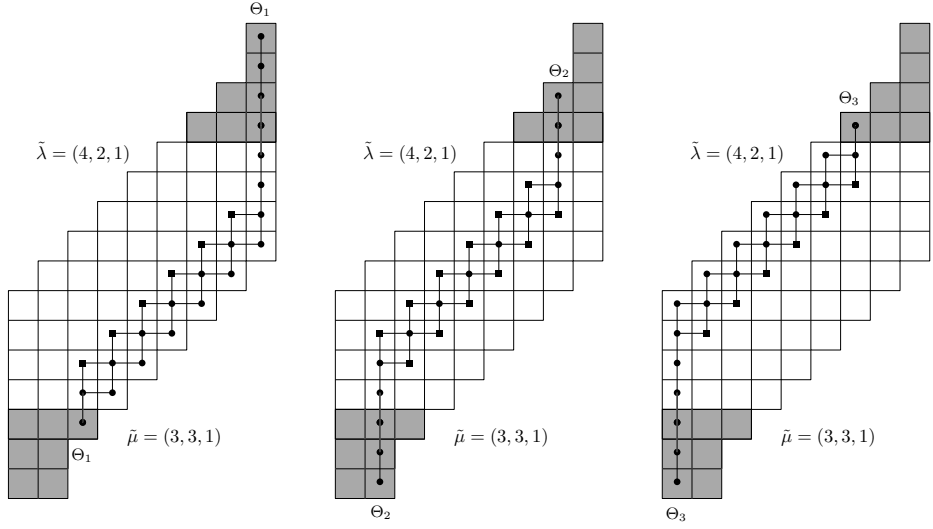}
\caption{The outside nested decomposition $\Phi=(\Theta_1,\Theta_2,\Theta_3)$ of the $7$-strip diagram $\CMcal{D}_7((4,2,1);(3,3,1))$ where each common special corner is marked by a black square in the thickened strip.
\label{F:pic-81}}
\end{figure}
\end{center}
Consider the outside nested decomposition $\Phi=(\Theta_1,\Theta_2,\ldots,\Theta_k)$, every thickened strip $\Theta_i$ is a $3$-strip of $(n-k+1)$ columns, with head partition $(L_i)$ and tail partition $(M_{k-i+1})$. The number of such tableaux are denoted by $\alpha_{n-k+1,3}(L_i,M_{k-i+1})$, that is, $f^{\Theta_i}=\alpha_{n-k+1,3}(L_i,M_{k-i+1})$. By Definition \ref{D:operator1}, we see that the thickened cutting strip $H(\Phi)$ is a $3$-strip of $(n-k+1)$ columns, with head partition $(L_1)$ and tail partition $(M_1)$. So it follows that $\Theta_i\#\Theta_j$ is a $3$-strip diagram with $(n-k+1)$ columns, with head partition $(L_i)$ and with tail partition $(M_{k-j+1})$. Consequently, $f^{\Theta_i\#\Theta_j}=\alpha_{n-k+1,3}(M_{k-j+1},L_i)=\alpha_{n-k+1,3}(L_i,M_{k-j+1})$. By Corollary~\ref{C:thick2} we know that the number $f^{\CMcal{D}_{2k+1}(\tilde{\lambda};\tilde{\mu})}$ of standard Young tableaux of $(2k+1)$-strip shape with $n$ columns, is expressed as a determinant where the $(i,j)$-th entry is $\alpha_{n-k+1,3}(L_i,M_{k-j+1})/(3n-3k+L_i+M_{k-j+1}+1)!
=Y_{2n-2k}(L_i,M_{k-j+1})$. That is to say,
\begin{align*}
f^{\CMcal{D}_{2k+1}(\tilde{\lambda};\tilde{\mu})}
&=\vert\CMcal{D}_{2k+1}(\tilde{\lambda};\tilde{\mu})
\vert!\det[Y_{2n-2k}(L_i,M_{k-j+1})]_{i,j=1}^{k}\\
&=(-1)^{\binom{k}{2}}\vert\CMcal{D}_{2k+1}(\tilde{\lambda};\tilde{\mu})
\vert!\det[Y_{2n-2k}(L_i,M_j)]_{i,j=1}^{k},
\end{align*}
which is (\ref{E:odd}).\qed
\subsubsection{Proof of (\ref{E:3strip})-(\ref{E:3strip2})}
Here we need the parameter $n$ to describe the number of columns when we decompose the $3$-strip diagrams. So we set
\[\begin{array}{cc}
\CMcal{D}_{3n-2}=\CMcal{D}_3(\varnothing;\varnothing), & \CMcal{D}_{3n-1}=\CMcal{D}_3((1);\varnothing), \\
\CMcal{D}_{3n-1}^*=\CMcal{D}_3(\varnothing;(1)),& \CMcal{D}_{3n}=\CMcal{D}_3((1);(1)).
\end{array}\]
and let $\CMcal{C}_{3n}$ denote a $3$-strip diagram which is obtained by adding a new box to the right of the topmost and rightmost box of $\CMcal{D}_3(\varnothing;(1))$; see Figure~\ref{F:3strip}. First we have two simple observations.
\begin{center}
\begin{figure}[htbp]
\includegraphics[scale=0.7]{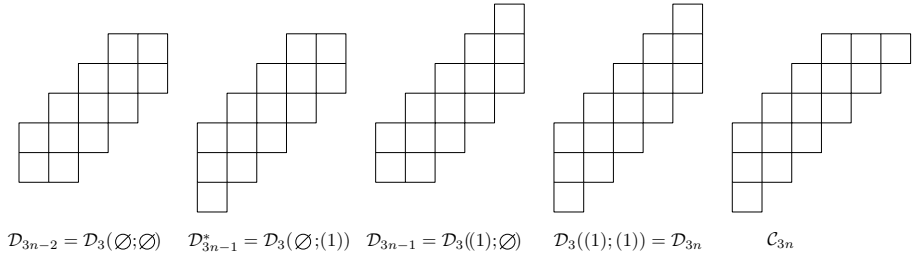}
\caption{$3$-strip diagrams
\label{F:3strip}}
\end{figure}
\end{center}
\begin{lemma}\label{L:funlem}
The numbers $f^{\CMcal{D}_{3n-2}}$, $f^{\CMcal{D}_{3n-1}}$ and $f^{\CMcal{D}_{3n}}$ satisfy
\begin{align}\label{E:div1}
(3n-1)f^{\CMcal{D}_{3n-2}}&=2f^{\CMcal{D}_{3n-1}},\\
\label{E:reTf}
(3n)f^{\CMcal{D}_{3n-1}}
&=f^{\CMcal{D}_{3n}}+f^{\CMcal{C}_{3n}}.
\end{align}
\end{lemma}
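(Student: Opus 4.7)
The plan is to prove both recursions by a branching-at-a-corner argument combined with symmetries of the body shape. Let $a_T$ denote the value of a SYT $T$ of the body $\CMcal{D}_{3n-2}$ at its top-right cell, and $b_T$ the value at its bottom-left cell. Since the head of $\CMcal{D}_{3n-1}=\CMcal{D}_3((1);\varnothing)$ sits directly above the body's top-right cell and has no other neighbor in the shape, removing the head entry $h$ from any SYT $T'$ of $\CMcal{D}_{3n-1}$ and decrementing the larger entries yields a SYT $T$ of the body, with $h$ free to be any value in $\{1,2,\ldots,a_T\}$. This bijection produces $f^{\CMcal{D}_{3n-1}}=\sum_T a_T$. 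An analogous analysis on $\CMcal{D}_{3n-1}^*=\CMcal{D}_3(\varnothing;(1))$ (tail below body's bottom-left in place of head) yields $f^{\CMcal{D}_{3n-1}^*}=\sum_T(3n-1-b_T)$. Since the $180^\circ$ rotation identifies $\CMcal{D}_{3n-1}$ with $\CMcal{D}_{3n-1}^*$, we have $f^{\CMcal{D}_{3n-1}}=f^{\CMcal{D}_{3n-1}^*}$, and adding the two formulas gives $\sum_T a_T+\sum_T b_T=(3n-1)f^{\CMcal{D}_{3n-2}}$.

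To finish (\ref{E:div1}), I would use that the body $\CMcal{D}_{3n-2}=(n,n,n-1,\ldots,2)/(n-2,\ldots,1)$ is self-conjugate (both $\lambda$ and $\mu$ coincide with their transposes), and transposition interchanges the body's top-right and bottom-left cells. Thus the involution $T\mapsto T^t$ on $\mathrm{SYT}(\CMcal{D}_{3n-2})$ gives $\sum_T a_T=\sum_T b_T$, and combining with the previous step yields $2f^{\CMcal{D}_{3n-1}}=(3n-1)f^{\CMcal{D}_{3n-2}}$.

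For (\ref{E:reTf}) the same technique will apply at the two opposite corners. Adjoining a tail cell below the body's bottom-left in $\CMcal{D}_{3n-1}$ produces $\CMcal{D}_{3n}$, and inserting a value $\tau\in[3n]$ at the tail into $T'\in\mathrm{SYT}(\CMcal{D}_{3n-1})$ is valid iff $\tau>B_{T'}$ where $B_{T'}$ is the value at the body's bottom-left cell, yielding $f^{\CMcal{D}_{3n}}=3n\cdot f^{\CMcal{D}_{3n-1}}-\sum_{T'}B_{T'}$. Analogously, $\CMcal{C}_{3n}$ attaches a cell immediately to the right of the body's top-right in $\CMcal{D}_{3n-1}^*$, giving $f^{\CMcal{C}_{3n}}=3n\cdot f^{\CMcal{D}_{3n-1}^*}-\sum_{T^*}C_{T^*}$ with $C_{T^*}$ the body top-right value of $T^*$. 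The $180^\circ$ rotation sends the body's bottom-left cell of $\CMcal{D}_{3n-1}$ to the body's top-right cell of $\CMcal{D}_{3n-1}^*$ and complements entries by $3n$, so $C_{T^*}=3n-B_{T'}$ along the bijection; substituting yields $f^{\CMcal{C}_{3n}}=\sum_{T'}B_{T'}$, and adding to $f^{\CMcal{D}_{3n}}$ telescopes to $3n\cdot f^{\CMcal{D}_{3n-1}}$, which is (\ref{E:reTf}). The main technical point is to verify from the explicit $\lambda/\mu$ descriptions that the body is self-conjugate and that each attached cell (head, tail, or the extra cell of $\CMcal{C}_{3n}$) has a single body neighbor, so that the insertion conditions reduce to single inequalities; both are routine to check directly from the shapes.
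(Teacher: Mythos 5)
Your proof is correct and is essentially the paper's argument in a slightly different arrangement: both identities are reduced to counting the ways of attaching one extra cell at a corner of the (self-symmetric) body, and both rely on the rotation-with-complementation symmetry giving $f^{\CMcal{D}_{3n-1}}=f^{\CMcal{D}_{3n-1}^*}$. The only organizational difference is that the paper always inserts the new entry at the top-right corner (above the ending box, or to its right followed by a transposition), so that for each fixed body tableau the two counts are complementary and sum to $3n-1$ automatically, whereas you attach the tail at the bottom-left and therefore need the extra observation $\sum_T a_T=\sum_T b_T$ from the self-conjugacy of the body --- a valid step, and one you correctly flag as needing the routine check that the body is self-conjugate and that each attached cell has a unique neighbour.
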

\begin{proof}
Let $\CMcal{T}_{\sigma}$ denote the set of all standard Young tableaux of shape $\sigma$. Then, in order to prove (\ref{E:div1}), we will establish the bijection
\begin{align}\label{E:in1}
[3n-1]\times \CMcal{T}_{\CMcal{D}_{3n-2}}\rightarrow
\CMcal{T}_{\CMcal{D}_{3n-1}}\dot{\cup}
\CMcal{T}_{\CMcal{D}_{3n-1}^*}.
\end{align}
Given a pair $(T,i)$ where $i\in[3n-1]$ and $T$ is a standard Young tableau of shape $\CMcal{D}_{3n-2}$ with entries from the set $[3n-1]-\{i\}$. Suppose that the rightmost and topmost box $\alpha$ of $T$ has entry $q$. If $i<q$, then we put a box with entry $i$ on the top of box $\alpha$, which gives us a standard Young tableau of shape $\CMcal{D}_{3n-1}$ with entries from $1$ to $3n-1$. Otherwise we put a box with entry $i$ to the right of box $\alpha$, which, after transposing the rows into columns, is a standard Young tableau of shape $\CMcal{D}_{3n-1}^*$ with entries from $1$ to $3n-1$. It is clear that this procedure is reversible, so the bijection (\ref{E:in1}) follows. Furthermore, it holds that $f^{\CMcal{D}_{3i-1}}=f^{\CMcal{D}_{3i-1}^*}$ since for any standard Young tableau of shape $\CMcal{D}_{3n-1}$, if we replace every entry $q$ by $3n-q$ and flip the diagram $\CMcal{D}_{3n-1}$ upside-down and reverse it left-to-right, we obtain a standard Young tableau of shape $\CMcal{D}_{3n-1}^*$. In combination of (\ref{E:in1}), it follows that (\ref{E:div1}) is true.

In order to prove (\ref{E:reTf}), we next establish the bijection
\begin{align}\label{E:in2}
[3n]\times \CMcal{T}_{\CMcal{D}_{3n-1}^*}\rightarrow
\CMcal{T}_{\CMcal{D}_{3n}}\dot{\cup}
\CMcal{T}_{\CMcal{C}_{3n}}
\end{align}
which is analogous to (\ref{E:in1}). Given a pair $(T,i)$ where $i\in[3n]$ and $T$ is a standard Young tableau of shape $\CMcal{D}_{3n-1}^*$ with entries from the set $[3n]-\{i\}$. Suppose that the rightmost and topmost box $\alpha$ of $T$ has entry $q$. If $i<q$, then we put a box with entry $i$ on the top of box $\alpha$, which gives us a standard Young tableau of shape $\CMcal{D}_{3n}$ with entries from $1$ to $3n$. Otherwise we put a box with entry $i$ to the right of box $\alpha$, which is a standard Young tableau of shape $\CMcal{C}_{3n}$ with entries from $1$ to $3n$. This implies that (\ref{E:in2}) is a bijection, thus in view of $f^{\CMcal{D}_{3n-1}}=f^{\CMcal{D}_{3n-1}^*}$, (\ref{E:reTf}) holds.
\end{proof}
By Lemma~\ref{L:funlem} it suffices to count the numbers $f^{\CMcal{D}_{3n-2}}$ and $f^{\CMcal{C}_{3n}}$. Consider the boxes
$$(1,n-1),(2,n-2),\ldots,(n-1,1)$$
of the $3$-strip diagram $\CMcal{D}_{3n-2}$, one of these boxes has the minimal entry $1$ for any standard Young tableau from $\CMcal{T}_{\CMcal{D}_{3n-2}}$. Let $\CMcal{D}_{3n-2,i}$ be the $3$-strip diagram $\CMcal{D}_{3n-2}$ after removing the box $(i,n-i)$. Then we have
\begin{lemma}\label{L:31}
For $1\le i\le n-1$, the numbers $f^{\CMcal{D}_{3n-2,i}}$ satisfy
\begin{align}\label{E:31}
(3n-2)f^{\CMcal{D}_{3n-2,i}}&=f^{\CMcal{D}_{3n-2}}
+\binom{3n-2}{3i-1}f^{\CMcal{D}_{3i-1}}f^{\CMcal{D}_{3n-3i-1}}.
\end{align}
\end{lemma}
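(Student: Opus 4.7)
The plan is to prove (\ref{E:31}) via a bijection
\begin{align*}
[3n-2]\,\times\,\CMcal{T}_{\CMcal{D}_{3n-2,i}} \longleftrightarrow \CMcal{T}_{\CMcal{D}_{3n-2}}\,\dot\cup\,\CMcal{S}_i,
\end{align*}
where $\CMcal{S}_i$ is the set of triples $(A, T_1, T_2)$ with $A\subset[3n-2]$ of size $3i-1$, $T_1\in\CMcal{T}_{\CMcal{D}_{3i-1}}$ filled by $A$, and $T_2\in\CMcal{T}_{\CMcal{D}_{3(n-i)-1}}$ filled by $[3n-2]\setminus A$. Both sides have cardinalities matching (\ref{E:31}), under the usual interpretation of the LHS as pairs $(q, T)$ with $T$ a SYT of $\CMcal{D}_{3n-2,i}$ whose entries are $[3n-2]\setminus\{q\}$.

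First I would establish the underlying geometric facts. From the body description of the $3$-strip, the box $(i, n-i)$ is a top-left corner of $\CMcal{D}_{3n-2}$: neither $(i-1, n-i)$ nor $(i, n-i-1)$ belongs to the diagram. Moreover, the horizontal cut between rows $i$ and $i+1$ splits $\CMcal{D}_{3n-2}$ into a top piece of $3i-1$ boxes (rows $1$ through $i$) and a bottom piece of $3(n-i)-1$ boxes (rows $i+1$ through $n$), whose shapes share the same $f$-values as $\CMcal{D}_{3i-1}$ and $\CMcal{D}_{3(n-i)-1}$ respectively. An element of $\CMcal{S}_i$ is then equivalently a SYT of the top piece together with a SYT of the bottom piece using disjoint entries that partition $[3n-2]$.

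The forward map is defined as follows. Given $(q, T)$, place $q$ at the vacated position $(i, n-i)$ to form a filling $\widetilde T$ of $\CMcal{D}_{3n-2}$, and write $\alpha = T(i+1, n-i)$, $\beta = T(i, n-i+1)$. If $q < \min(\alpha, \beta)$, then $\widetilde T$ is a SYT and we map $(q, T)\mapsto\widetilde T\in\CMcal{T}_{\CMcal{D}_{3n-2}}$. Otherwise, restrict $\widetilde T$ to its top and bottom pieces: the bottom restriction is automatically a SYT (its constraints are inherited from $T$), while the top restriction is itself a SYT precisely when $q<\beta$, which covers the case $\alpha<q<\beta$ directly and produces a pair in $\CMcal{S}_i$. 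When $q>\beta$, a corrective rightward slide of $q$ along row $i$ of the top piece is applied to restore the SYT property, and the resulting pair $(T_1, T_2)$ is the image of $(q, T)$.

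The main obstacle is the case $q>\beta$: the slide must be defined so as to be reversible, ensuring the bijection is well-defined and the image of this case does not overlap with that of the case $\alpha<q<\beta$. The cleanest route is to arrange the construction so that the two pair-producing cases are distinguished by a structural feature of the resulting pair itself (for instance, via the relative order of $T_1(i, n-i)$ and $T_2(i+1, n-i)$), which also provides the rule for the inverse map. Verifying this invertibility and confirming that each pair in $\CMcal{S}_i$ arises from exactly one preimage will be the central technical step of the proof.
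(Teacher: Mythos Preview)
Your overall framework---the bijection
\[
[3n-2]\times\CMcal{T}_{\CMcal{D}_{3n-2,i}}\longrightarrow \CMcal{T}_{\CMcal{D}_{3n-2}}\,\dot\cup\,\CMcal{S}_i
\]
and the first case $q<\min(\alpha,\beta)$---matches the paper exactly. The difficulty you flag as ``the main obstacle'' is genuine, however, and your proposed resolution does not work as stated. If $q>\beta$ and you slide $q$ rightward along row $i$, the leftmost entry of row~$i$ in the resulting $T_1$ becomes $\beta$, and there is no constraint forcing $\beta<\alpha=T_2(i+1,n-i)$. Concretely, when $\alpha<\beta<q$ your slide case produces a pair with $T_1(i,n-i)=\beta>\alpha=T_2(i+1,n-i)$, which is indistinguishable by your criterion from a pair arising in your case $\alpha<q<\beta$. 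The two cases therefore overlap in the image and the map is not injective.

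The paper circumvents this entirely by a different, slide-free construction. Rather than always cutting horizontally between rows $i$ and $i{+}1$ and placing $r$ at the removed corner $(i,n-i)$, the paper branches on which neighbour is the minimum: if $r>q_1=T(i+1,n-i)$ is the relevant inequality, the extra box carrying $r$ is attached \emph{to the right of} $(i+1,n-i)$ and the split separates the segment up to $(i{+}1,n{-}i)$ from the segment starting at $(i{+}1,n{-}i{+}1)$; if instead $r>q_2=T(i,n-i+1)$, the extra box is attached \emph{below} $(i,n-i+1)$ and the split is taken one step over. In each branch the single inequality $r>q_k$ is exactly what is needed to make the augmented piece a valid SYT, so no corrective move is required. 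The inverse then compares the two boundary entries of the pieces to decide which branch to undo. Adopting this asymmetric placement of the extra box, rather than a slide, is the missing idea.
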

\begin{proof}
Let $\CMcal{S}$ denote the set of all $(3i-1)$-subsets of $[3n-2]$, we aim to construct the bijection
\begin{align}\label{E:b1}
[3n-2]\times \CMcal{T}_{\CMcal{D}_{3n-2,i}}\rightarrow
\CMcal{T}_{\CMcal{D}_{3n-2}}\dot{\cup}(\CMcal{S}
\times\CMcal{T}_{\CMcal{D}_{3i-1}^*}\times \CMcal{T}_{\CMcal{D}_{3n-3i-1}^*}),
\end{align}
from which (\ref{E:31}) follows immediately. Given a pair $(T,r)$ where $r\in[3n-2]$ and $T$ is a standard Young tableau of shape $\CMcal{D}_{3n-2,i}$ with entries from the set $[3n-2]-\{r\}$. Suppose that the entries of box $(i+1,n-i)$ and box $(i,n-i+1)$ are $q_1$ and $q_2$ in $T$, we set $q=\min\{q_1,q_2\}$. If $r<q$, then we add a box $(i,n-i)$ with entry $r$ to $T$, which is a standard Young tableau of shape $\CMcal{D}_{3n-2}$.

If $r>q=q_1$, then we consider a segment of $T$ from the starting box of $\CMcal{D}_{3n-2,i}$ to box $(i+1,n-i)$ and we add a box with entry $r$ to the right of box $(i+1,n-i)$, which, after transposing the rows into columns, leads to a standard Young tableau of shape $\CMcal{D}_{3n-3i-1}^*$ with entries coming from a $(3n-3i-1)$-subset $A$ of $[3n-2]$. Moreover, the segment of $T$ from box $(i+1,n-i+1)$ to the ending box of $\CMcal{D}_{3n-2,i}$, is a standard Young tableau of shape $\CMcal{D}_{3i-1}^*$ with entries coming from the complement set $A^c$ of $A$ with respect to $[3n-2]$.

If $r>q=q_2$, then we consider a segment of $T$ from box $(i,n-i+1)$ to the ending box of $\CMcal{D}_{3n-2,i}$ and we add a box with entry $r$ right below the box $(i,n-i+1)$, which leads to a standard Young tableau of shape $\CMcal{D}_{3i-1}^*$ with entries coming from a $(3i-1)$-subset $B$ of $[3n-2]$. Moreover, the segment of $T$ from the starting box of $\CMcal{D}_{3n-2,i}$ to box $(i+1,n-i+1)$, which, after transposing the rows into columns, is a standard Young tableau of shape $\CMcal{D}_{3n-3i-1}^*$ with entries coming from the complement set $B^c$ of $B$ with respect to $[3n-2]$.

Conversely, given a standard Young tableau $T_0$ of shape $\CMcal{D}_{3n-2}$, we set $r$ to be the entry of box $(i,n-i)$ in $T_0$ and after we remove box $(i,n-i)$ from $T_0$, we obtain a standard Young tableau of shape $\CMcal{D}_{3n-2,i}$. Given a triple $(D,T_1,T_2)$ where $T_1$ is a standard Young tableau of shape $\CMcal{D}_{3i-1}^*$ with entries from $D\in\CMcal{S}$, and $T_2$ is a standard Young tableau of shape $\CMcal{D}_{3n-3i-1}^*$ with entries from the complement set $D^c$.

Suppose that the entry of box $(i,1)$ in $T_1$ is $q_3$ and the entry of box $(n-i,1)$ in $T_2$ is $q_4$, if $q_3>q_4$, we remove the box $(n-i+1,1)$ of $T_2$, then transpose it from columns into rows and put the box with entry $q_4$ to the left of box $(i+1,1)$ of $T_1$. This gives us a standard Young tableau of shape $\CMcal{D}_{3n-2,i}$ such that the entry of box $(i,n-i+1)$ is larger than the one of box $(i+1,n-i)$ and we choose $r$ to be the entry of box $(n-i+1,1)$ of $T_2$, so that $r>q_4$.

If $q_3<q_4$, we transpose $T_2$ from columns into rows, then put its rightmost and topmost box right below the box with entry $q_3$ after we remove the box $(i+1,1)$ of $T_1$. This gives us a standard Young tableau of shape $\CMcal{D}_{3n-2,i}$ such that the entry of box $(i,n-i+1)$ is smaller than the one of box $(i+1,n-i)$ and we choose $r$ to be the entry of box $(i+1,1)$ of $T_1$, so that $r>q_3$.

Since all cases are disjoint and cover all possible scenarios, (\ref{E:b1}) is a bijection. In view of $f^{\CMcal{D}_{3i-1}}=f^{\CMcal{D}_{3i-1}^*}$, (\ref{E:31}) follows.
\end{proof}
\begin{example}
For $i=2$, we consider the pairs $(T_1,4)$ and $(T_2,6)$. Since $4<\min\{6,8\}$, we put a box with entry $4$ to $T_1$. Since $6>\min\{4,8\}$, we separate $T_2$ of shape $\CMcal{D}_{13,2}$ into two standard Young tableaux of shapes $D_{5}^*$ and $D_{8}^*$.
\end{example}
\begin{center}
\begin{figure}[htbp]
\includegraphics[scale=0.8]{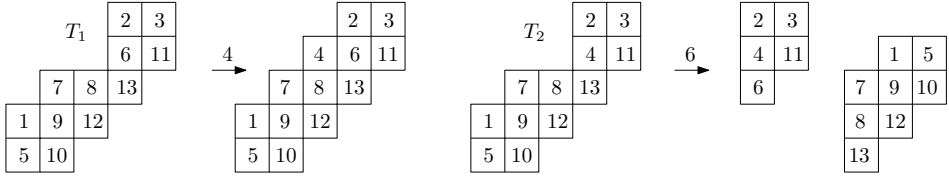}
\caption{Two examples of the bijection (\ref{E:b1}) in Lemma~\ref{L:31}.
\label{F:3strip}}
\end{figure}
\end{center}
Let $\CMcal{C}_{3n,i}$ be the $3$-strip diagram $\CMcal{C}_{3n}$ after removing the box $(i,n-i)$, we can decompose the skew diagram $\CMcal{C}_{3n}$ in exactly the same way. So we omit the proof of Lemma~\ref{L:32}.
\begin{lemma}\label{L:32}
For $1\le i\le n-1$, the numbers $f^{\CMcal{C}_{3n,i}}$ satisfy
\begin{align}\label{E:32}
(3n)f^{\CMcal{C}_{3n,i}}&=f^{\CMcal{C}_{3n}}
+\binom{3n}{3i}f^{\CMcal{C}_{3i}}f^{\CMcal{C}_{3n-3i}}.
\end{align}
\end{lemma}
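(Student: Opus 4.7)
The plan is to establish, in direct analogy with the bijection (\ref{E:b1}) from Lemma~\ref{L:31}, the bijection
\begin{align*}
[3n]\times \CMcal{T}_{\CMcal{C}_{3n,i}}\;\to\;\CMcal{T}_{\CMcal{C}_{3n}}\,\dot{\cup}\,\bigl(\CMcal{S}'\times \CMcal{T}_{\CMcal{C}_{3i}}\times \CMcal{T}_{\CMcal{C}_{3n-3i}}\bigr),
\end{align*}
where $\CMcal{S}'$ denotes the collection of all $3i$-subsets of $[3n]$. Since $\vert\CMcal{C}_{3n,i}\vert=3n-1$, the pair $(T,r)$ on the left carries the full entry set $[3n]$, so taking cardinalities of both sides yields (\ref{E:32}) at once.

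For the forward direction, given a pair $(T,r)$ with $r\in[3n]$ and $T\in\CMcal{T}_{\CMcal{C}_{3n,i}}$ carrying the entries $[3n]-\{r\}$, I would let $q_1,q_2$ be the entries of the two boxes adjacent to the removed corner $(i,n-i)$, namely $(i+1,n-i)$ and $(i,n-i+1)$, and set $q=\min\{q_1,q_2\}$. In the case $r<q$, I simply insert a new box of entry $r$ at position $(i,n-i)$: this respects the row and column inequalities and produces a standard Young tableau of shape $\CMcal{C}_{3n}$. In the case $r>q=q_1$, I cut $T$ along the diagonal through $(i,n-i)$, attach a new box of entry $r$ to the right of $(i+1,n-i)$ inside the lower-left portion, and transpose that portion; this yields a pair $(T_1,T_2)$ whose shapes are $\CMcal{C}_{3n-3i}$ and $\CMcal{C}_{3i}$, with entry sets $D^c$ and $D$ for a unique $3i$-subset $D\in\CMcal{S}'$. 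The case $r>q=q_2$ is treated symmetrically, with the new entry $r$ attached below $(i,n-i+1)$ in the upper-right portion.

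The inverse map proceeds exactly as in Lemma~\ref{L:31}: given a triple $(D,T_1,T_2)$, I compare the entries of the natural joining boxes of $T_1$ and $T_2$, strip off the smaller of the two, splice the remaining tableaux along the cut line to produce an element of $\CMcal{T}_{\CMcal{C}_{3n,i}}$, and record the removed entry as $r$. The main point requiring verification---and what I expect to be the principal obstacle---is that the two pieces coming out of the cut are genuinely $\CMcal{C}$-shaped (i.e.~of shapes $\CMcal{C}_{3i}$ and $\CMcal{C}_{3n-3i}$), rather than the $\CMcal{D}^{\ast}$-shapes that arise in Lemma~\ref{L:31}. This is precisely where the extra box defining $\CMcal{C}_{3n}$ (the one added to the right of the topmost-rightmost box of $\CMcal{D}_3(\varnothing;(1))$) plays its role: it lies in the right-hand piece of the cut and supplies the extra right-end box characterising the $\CMcal{C}$-shape there, while the new box attached during the splitting procedure supplies the analogous extra box on the left-hand piece, giving it a $\CMcal{C}$-shape as well. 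Once this shape-matching is confirmed the three cases are manifestly disjoint and exhaustive, the inverse is well defined, and (\ref{E:32}) follows.
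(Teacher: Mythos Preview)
Your proposal is correct and follows precisely the approach the paper intends: the paper itself omits the proof of Lemma~\ref{L:32}, stating only that ``we can decompose the skew diagram $\CMcal{C}_{3n}$ in exactly the same way'' as in Lemma~\ref{L:31}, and your bijection $[3n]\times\CMcal{T}_{\CMcal{C}_{3n,i}}\to\CMcal{T}_{\CMcal{C}_{3n}}\dot\cup(\CMcal{S}'\times\CMcal{T}_{\CMcal{C}_{3i}}\times\CMcal{T}_{\CMcal{C}_{3n-3i}})$ is that analogue. Your identification of the shape-matching as the only nontrivial point, and your explanation of why both pieces acquire the extra $\CMcal{C}$-box (one from the original top-right box of $\CMcal{C}_{3n}$, the other from the newly attached box of entry $r$), is exactly the content the paper leaves to the reader.
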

With the help of recursions (\ref{E:31}) and (\ref{E:32}), we could use the generating function approach to finally derive the numbers of $3$-strip tableaux.

{\em Proof of (\ref{E:3strip})-(\ref{E:3strip2})}. Summing (\ref{E:31}) and (\ref{E:32}) over all $i$ gives us
\begin{align}\label{E:rec1}
(2n-1)f^{\CMcal{D}_{3n-2}}&=\sum_{i=1}^{n-1}
\binom{3n-2}{3i-1}f^{\CMcal{D}_{3i-1}}
f^{\CMcal{D}_{3n-3i-1}}\\
\label{E:rec2}
(2n+1)f^{\CMcal{C}_{3n}}&=\sum_{i=1}^{n-1}\binom{3n}{3i}
f^{\CMcal{C}_{3i}}f^{\CMcal{C}_{3n-3i}}.
\end{align}
We can translate the recursions (\ref{E:rec1}) and (\ref{E:rec2}) into two identities of exponential generating functions. We define that
\begin{eqnarray*}
f(x)=\sum_{n\ge 1}\frac{f^{\CMcal{D}_{3n-2}}}{(3n-2)!}x^{2n-1},\,\,
g(x)=\sum_{n\ge 1}\frac{f^{\CMcal{D}_{3n-1}}}{(3n-1)!}x^{2n-1},\,\,
h(x)=\sum_{n\ge 1}\frac{f^{\CMcal{C}_{3n}}}{(3n)!}x^{2n-1}.
\end{eqnarray*}
From (\ref{E:div1}) we have $f(x)=2g(x)$. Furthermore, (\ref{E:rec1}) is
equivalent to $f'(x)=1+g(x)^2$ where $f(0)=0$. This leads to a unique solution,
$g(x)=\tan(x/2)$. Together with the exponential generating function for $A_{2n-1}$; see (\ref{E:andre}), we can prove (\ref{E:3strip}) and (\ref{E:3strip1}).
Similarly, (\ref{E:rec2}) is equivalent to $-h'(x)=-1+\frac{2}{x}h(x)-h^2(x)$ where $h(0)=1$. This yields a unique solution $$h(x)=-\frac{1}{\tan x}+\frac{1}{x}=\frac{1}{3}x+\frac{1}{45}x^3+\cdots,$$
from which we can derive the numbers $f^{\CMcal{C}_{3n}}$ by expanding $h(x)$, i.e.,
\begin{align}\label{E:c}
f^{\CMcal{C}_{3n}}=\frac{(3n)!A_{2n-1}}{(2n-1)!
(2^{2n}-1)}=(3n)!\tilde{A}_{2n-1}.
\end{align}
Together with (\ref{E:reTf}) and (\ref{E:3strip1}), 
(\ref{E:3strip2}) is proved.\qed
\subsubsection{Proof of (\ref{E:4strip1})-(\ref{E:4strip2})}
For the $4$-strip diagram $\CMcal{D}_4(\varnothing;\varnothing)$, we choose another outside decomposition $\phi^*=(\theta_{1}^*,\theta_{2}^*,\ldots,\theta_k^*)$, which is slightly different to the one $\phi=(\theta_1,\theta_2,\ldots,\theta_k)$ for the $2k$-strip diagram $\CMcal{D}_{2k}(\tilde{\lambda};\tilde{\mu})$. The benefit to make such a slight change is that the determinant in (\ref{E:even}) is further simplified, which only has the numbers $\bar{A}_i$ as entries.

We call a $2$-strip {\em a zig-zag strip} if the corresponding standard Young tableaux are in bijection with up-down permutations. For instance, the $2$-strip diagram $\CMcal{D}_2(\varnothing;\varnothing)$ is a zig-zag strip and all strips in Figure~\ref{F:pic-82} are zig-zag strips.
\begin{center}
\begin{figure}[htbp]
\includegraphics[scale=0.7]{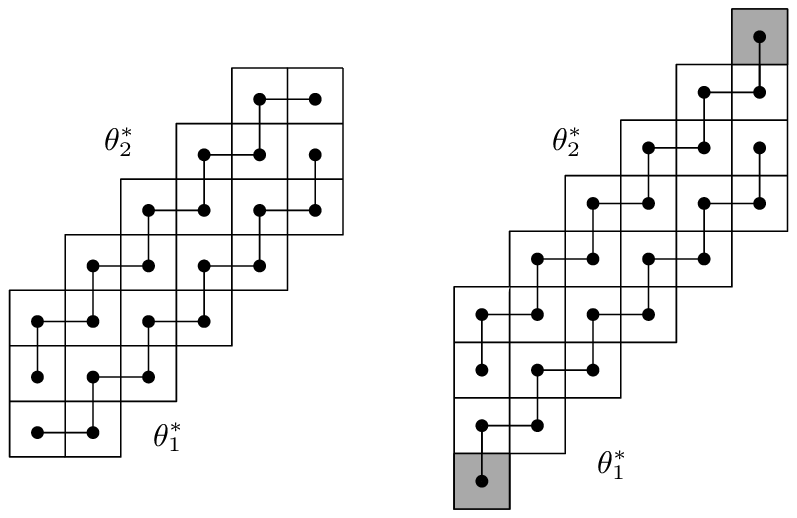}
\caption{The outside nested decompositions $\phi^*=(\theta_1^*,\theta_2^*)$ of the $4$-strip diagrams $\CMcal{D}_4(\varnothing;\varnothing)$ (left) and $\CMcal{D}_4((1);(1))$ (right).
\label{F:pic-82}}
\end{figure}
\end{center}
For the $4$-strip diagrams $\CMcal{D}_4(\varnothing;\varnothing)$, we can peel each diagram off into successive maximal zig-zag outer strips $\theta_1^*,\theta_2^*,\ldots,\theta_{k}^*$ beginning from the outside. See the left one in Figure~\ref{F:pic-82}. It is clear that the numbers of any zig-zag strip $\theta_1^*$ or $\theta_2^*$ are Euler numbers or tangent numbers. By Definition~\ref{D:operator1} we find that
\begin{align*}
f^{\theta_1^*}=f^{\theta_2^*}=A_{2n-1},\quad f^{\theta_1^*\#\theta_2^*}=A_{2n-2}\quad\mbox{ and }\quad
f^{\theta_2^*\#\theta_1^*}=A_{2n}.
\end{align*}
By Corollary~\ref{C:thick2}, we can prove (\ref{E:4strip1}) and (\ref{E:4strip2}) follows in the same way.\qed
\subsubsection{Proof of (\ref{E:5strip})}
For the $5$-strip diagram $\CMcal{D}_5(\varnothing;\varnothing)$, we choose another outside nested decomposition $\Phi^*=(\Theta_{1}^*,\Theta_{2}^*,\ldots,\Theta_k^*)$, which is slightly different to the one $\Phi=(\Theta_1,\Theta_2,\ldots,\Theta_k)$ for the $(2k+1)$-strip diagram $\CMcal{D}_{2k+1}(\tilde{\lambda};\tilde{\mu})$. The benefit to make such a slight change is that the determinant in (\ref{E:odd}) is further simplified, which only has the numbers $\hat{A}_i$ and $\tilde{A}_j$ as entries.

We call a $3$-strip {\em a zig-zag thickened strip} if the number of such $3$-strip tableaux is one of the numbers $f^{\CMcal{D}_{3n-2}}$, $f^{\CMcal{D}_{3n-1}}$, $f^{\CMcal{D}_{3n}}$ and $f^{\CMcal{C}_{3n}}$.
For instance, two thickened strips in Figure~\ref{F:pic-83} are zig-zag thickened strips. It is clear that the numbers of the zig-zag thickened strips $\Theta_1^*$ or $\Theta_2^*$ are $f^{\CMcal{C}_{3n-3}}$. By Definition~\ref{D:operator1} we find that
\begin{align*}
f^{\Theta_1^*}=f^{\Theta_2^*}=f^{\CMcal{C}_{3n-3}}\quad \mbox{ and }\quad f^{\Theta_1^*\#\Theta_2^*}=f^{\Theta_2^*\#\Theta_1^*}=f^{\CMcal{D}_{3n-3}}
=f^{\CMcal{D}_3((1);(1))}.
\end{align*}
By Corollary~\ref{C:thick2}, we can prove that
\begin{align*}
f^{\CMcal{D}_5(\varnothing;\varnothing)}=\frac{(5n-6)!}{(3n-3)!^2}
((f^{\CMcal{C}_{3n-3}})^2-(f^{\CMcal{D}_{3}((1);(1))})^2).
\end{align*}
Combining (\ref{E:3strip2}) and (\ref{E:c}), we can conclude that (\ref{E:5strip}) is true.\qed
\begin{center}
\begin{figure}[htbp]
\includegraphics[scale=0.7]{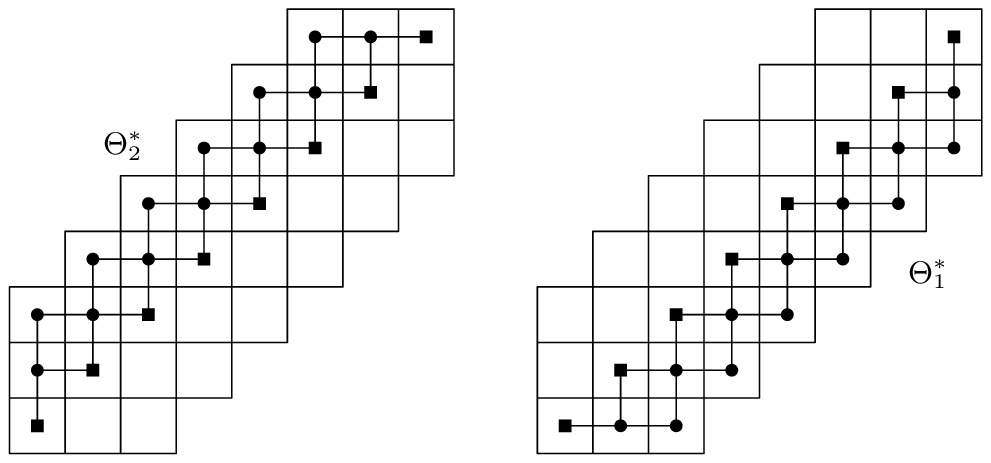}
\caption{The outside nested decomposition $\Phi^*=(\Theta_1^*,\Theta_2^*)$ of the $5$-strip diagram $\CMcal{D}_5(\varnothing;\varnothing)$.
\label{F:pic-83}}
\end{figure}
\end{center}

\section*{Acknowledgement}
The author would like to thank Ang\`{e}le Hamel, Igor Pak, Dan Romik and John Stembridge for their very helpful suggestions and encouragements and would like to give special thanks to the joint seminar Arbeitsgemeinschaft Diskrete Mathematik for their valuable feedback. This work was partially done during my stay in the AG Algorithm and Complexity, Technische Universit\"{a}t Kaiserslautern, Germany. The author also thanks Markus Nebel, Sebastian Wild and Raphael Reitzig for their kind help and support.

\end{document}